
\documentclass[10pt,reqno]{amsart}
\pdfoutput=1


\usepackage{adjustbox,array,bigints,cancel,color,comment,extpfeil,mathdots,mathrsfs,mathtools,MnSymbol,multicol,scalerel,setspace,tcolorbox,tikz,tikz-cd,upgreek}
\usepackage[fontsize = 10pt]{fontsize}
\usepackage[left = 2.5cm, right = 2.5cm, top = 2.5cm, bottom = 2.5cm]{geometry}
\usepackage{hyperref}
\usetikzlibrary{patterns}
\urlstyle{same}


\numberwithin{equation}{section}

\theoremstyle{plain}
\newtheorem{Cl}[equation]{Claim}

\newtheorem{Lem}[equation]{Lemma}

\newtheorem{Prop}[equation]{Proposition}

\newtheorem{Thm}[equation]{Theorem}

\theoremstyle{definition}

\newtheorem{Defn}[equation]{Definition}

\theoremstyle{remark}
\newtheorem{Rk}[equation]{Remark}

\theoremstyle{plain}
\newtheorem*{Cl*}{Claim}
\newtheorem*{Conj*}{Conjecture}
\newtheorem*{Lem*}{Lemma}
\newtheorem*{Prop*}{Proposition}
\newtheorem*{Q*}{Question}
\newtheorem*{Schol*}{Scholium}
\newtheorem*{SubCl*}{Subclaim}
\newtheorem*{Thm*}{Theorem}

\theoremstyle{definition}
\newtheorem*{Cond*}{Condition}
\newtheorem*{Cstr*}{Construction}
\newtheorem*{Defn*}{Definition}
\newtheorem*{Ex*}{Example}
\newtheorem*{Exs*}{Examples}
\newtheorem*{Md*}{Method}
\newtheorem*{Nt*}{Notation}
\newtheorem*{Pty*}{Property}

\theoremstyle{remark}

\newtheorem*{Rk*}{Remark}
\newtheorem*{Rks*}{Remarks}
\newtheorem*{A-d}{Aside}

\newcommand{\cag}{\begin{equation}\begin{gathered}}
\newcommand{\caag}{\end{gathered}\end{equation}}
\newcommand{\caw}{\begin{equation*}\begin{gathered}}
\newcommand{\caaw}{\end{gathered}\end{equation*}}
\newcommand{\e}{\begin{equation}\begin{aligned}}
\newcommand{\ee}{\end{aligned}\end{equation}}
\newcommand{\ew}{\begin{equation*}\begin{aligned}}
\newcommand{\eew}{\end{aligned}\end{equation*}}

\newcommand{\bcd}{\begin{tikzcd}}
\newcommand{\ecd}{\end{tikzcd}}
\newcommand{\bma}{\begin{matrix}}
\newcommand{\ema}{\end{matrix}}
\newcommand{\bpm}{\begin{pmatrix}}
\newcommand{\epm}{\end{pmatrix}}
\newcommand{\bvm}{\begin{vmatrix}}
\newcommand{\evm}{\end{vmatrix}}

\newcommand{\nts}{\begin{tcolorbox}}
\newcommand{\ntss}{\end{tcolorbox}}


\newcommand{\cref}[1]{Corollary \ref{#1}}

\newcommand{\dref}[1]{Definition \ref{#1}}

\newcommand{\eref}[1]{eqn.\hspace{0.6mm}(\ref{#1})}

\newcommand{\lref}[1]{Lemma \ref{#1}}

\newcommand{\pref}[1]{Proposition \ref{#1}}

\newcommand{\rref}[1]{Remark \ref{#1}}

\newcommand{\sref}[1]{\S\ref{#1}}
\newcommand{\srefs}[1]{\S\S\ref{#1}}
\newcommand{\tref}[1]{Theorem \ref{#1}}


\DeclareMathSizes{10}{10}{8}{7}

\newcommand{\msm}[1]{\mbox{\small \(#1\)}}
\newcommand{\mns}[1]{\mbox{\normalsize \(#1\)}}

\newcommand{\mLa}[1]{\mbox{\Large \(#1\)}}

\newcommand{\bb}[1]{\mathbb{#1}}
\newcommand{\cal}[1]{\mathscr{#1}}
\newcommand{\fr}[1]{\mathfrak{#1}}
\newcommand{\mb}[1]{\mbox{\boldmath \(#1\)}}
\newcommand{\mc}[1]{\mathcal{#1}}


\newcommand{\abo}{\hspace{3mm}\text{or}\hspace{3mm}}

\newcommand{\et}{\hspace{3mm}\text{and}\hspace{3mm}}
\newcommand{\hs}[1]{\hspace{#1}}

\newcommand{\vs}[1]{\vspace{#1}}


\DeclareMathSymbol{\Alpha}{\mathalpha}{operators}{"41}
\DeclareMathSymbol{\Beta}{\mathalpha}{operators}{"42}
\DeclareMathSymbol{\Epsilon}{\mathalpha}{operators}{"45}
\DeclareMathSymbol{\Zeta}{\mathalpha}{operators}{"5A}
\DeclareMathSymbol{\Eta}{\mathalpha}{operators}{"48}
\DeclareMathSymbol{\Iota}{\mathalpha}{operators}{"49}
\DeclareMathSymbol{\Kappa}{\mathalpha}{operators}{"4B}
\DeclareMathSymbol{\Mu}{\mathalpha}{operators}{"4D}
\DeclareMathSymbol{\Nu}{\mathalpha}{operators}{"4E}
\DeclareMathSymbol{\Omicron}{\mathalpha}{operators}{"4F}
\DeclareMathSymbol{\Rho}{\mathalpha}{operators}{"50}
\DeclareMathSymbol{\Tau}{\mathalpha}{operators}{"54}
\DeclareMathSymbol{\Chi}{\mathalpha}{operators}{"58}
\DeclareMathSymbol{\omicron}{\mathord}{letters}{"6F}

\newcommand{\al}{\alpha}

\renewcommand{\th}{\theta}

\newcommand{\io}{\iota}
\newcommand{\ka}{\kappa}
\newcommand{\la}{\lambda}

\newcommand{\rh}{\rho}

\newcommand{\si}{\sigma}

\newcommand{\ta}{\tau}

\newcommand{\om}{\omega}


\newcommand{\Ga}{\Gamma}

\newcommand{\La}{\Lambda}

\newcommand{\Si}{\Sigma}

\newcommand{\Om}{\Omega}



\newcommand{\<}{\langle}
\newcommand{\?}{\rangle}

\newcommand{\Ann}{\operatorname{Ann}}

\newcommand{\Ds}{\bigoplus}
\newcommand{\ds}{\oplus}

\newcommand{\Hom}{\operatorname{Hom}}
\newcommand{\Id}{\operatorname{Id}}

\newcommand{\Ker}{\operatorname{Ker}}

\newcommand{\lqt}[2]{\left.\raisebox{-1mm}{\(#2\)}\middle\backslash\raisebox{1mm}{\(#1\)}\right.}

\newcommand{\rqt}[2]{\left.\raisebox{1mm}{\(#1\)}\middle/\raisebox{-1mm}{\(#2\)}\right.}
\newcommand{\ts}{\otimes}

\newcommand{\Tr}{\operatorname{Tr}}

\newcommand{\x}{\times}

\newcommand{\del}{\partial}

\newcommand{\Hocl}[1]{\overset{\circ}{H^{#1}}_{\kern-1.9mm\cl}}

\newcommand{\lop}{\left\|\kern-1.30mm\left\|}
\newcommand{\op}{\|\kern-1.30mm\|}
\newcommand{\rop}{\right\|\kern-1.30mm\right\|}
\newcommand{\SI}{\operatorname{\cal{I}\kern-1.5pt nd}}


\newcommand{\cc}{\subseteq}

\newcommand{\es}{\emptyset}

\newcommand{\mt}{\mapsto}

\newcommand{\osr}{\backslash}
\newcommand{\oto}[1]{\xrightarrow{#1}}
\newcommand{\pc}{\subset}

\newcommand{\yy}{\supseteq}


\newcommand{\CL}{\mathcal{C}l}
\newcommand{\cl}{\mathrm{closed}}

\newcommand{\Conv}{\mathrm{Conv}}

\newcommand{\dd}{\mathrm{d}}

\newcommand{\dR}[1]{H^{#1}_{\operatorname{dR}}}

\newcommand{\emb}{\hookrightarrow}

\newcommand{\Gr}{\mathrm{Gr}}

\newcommand{\hk}{\righthalfcup}

\newcommand{\M}{\mathrm{M}}

\newcommand{\oGr}{\widetilde{\mathrm{\Gr}}}

\newcommand{\Op}{\mathcal{O}p}


\newcommand{\T}{\mathrm{T}}

\newcommand{\w}{\wedge}
\newcommand{\ww}[2][{}]{\bigwedge{\hspace{-1mm}}^{#2}_{\hspace{1mm}#1}\hspace{0.1mm}}

\newcommand{\0}{\infty}
\newcommand{\1}{\cdot}
\newcommand{\bin}{\binom}

\renewcommand{\ge}{\geqslant}
\newcommand{\gl}{\hspace{0.4mm}\raisebox{0.8mm}{\(>\)}\kern-1.8mm\raisebox{-0.8mm}{\(<\)}\hspace{0.4mm}}
\newcommand{\gle}{\hspace{0.4mm}\raisebox{1.2mm}{\(\ge\)}\kern-1.8mm\raisebox{-1.2mm}{\(\le\)}\hspace{0.4mm}}
\renewcommand{\le}{\leqslant}
\newcommand{\pt}{\bullet}


\newcommand{\GL}{\operatorname{GL}}

\newcommand{\sg}{\(\widetilde{\mathrm{G}}_2\)}
\newcommand{\SL}{\operatorname{SL}}

\newcommand{\slr}{\(\operatorname{SL}(3;\mathbb{R})^2\)}

\newcommand{\Stab}{\operatorname{Stab}}


\renewcommand{\iff}{if and only if}

\newcommand{\Wlg}{Without loss of generality}
\newcommand{\wlg}{without loss of generality}

\newcommand{\wrt}{with respect to}



\newcommand{\lt}{\left}
\newcommand{\m}{\middle}

\newcommand{\rt}{\right}

\newcommand{\tld}{\widetilde}

\title[The relative \(\lowercase{h}\)-principle for closed \(\SL(3;\bb{R})^2\) 3-forms]{The relative \(\mb{\lowercase{h}}\)-principle for closed \(\mb{\SL(3;\bb{R})^2}\) 3-forms}
\author{Laurence H. Mayther}

\begin{document}\fontsize{10pt}{12pt}\selectfont
\begin{abstract}
\footnotesize{This paper uses convex integration with avoidance and transversality arguments to prove the relative \(h\)-principle for closed \slr\ 3-forms on oriented 6-manifolds.  As corollaries, it is proven that if an oriented 6-manifold \(\M\) admits any \slr\ 3-form, then every degree 3 cohomology class on \(\M\) can be represented by an \slr\ 3-form and, moreover, that the corresponding Hitchin functional on \slr\ 3-forms representing this class is necessarily unbounded above.  Essential to the proof of the \(h\)-principle is a careful analysis of the rank 3 distributions induced by an \slr\ 3-form and their interaction with generic pairs of hyperplanes.  The proof also introduces a new property of sets in affine space, termed macilence, as a method of verifying ampleness.}
\end{abstract}
\maketitle

\section{Introduction}

This is the second of two papers by the author which seek to investigate which classes of closed stable forms satisfy the relative \(h\)-principle.  In \cite{RhPfCSF}, the author used classical convex integration to prove the relative \(h\)-principle for stable \((2k-2)\)-forms in \(2k\) dimensions, \((2k-1)\)-forms in \(2k+1\) dimensions, \sg\ 3-forms and \sg\ 4-forms, each of which had previously not been known to satisfy the relative \(h\)-principle.  The purpose of the current paper is to examine a further class of stable forms where the relative \(h\)-principle had previously not been known to hold, {\it viz.}\ \slr\ 3-forms, for which different methods are required.  By applying a special case of Gromov's general theory of convex integration via convex hull extensions, known as convex integration with avoidance (recently introduced in \cite{CIwA&H46D}), I prove that the relative \(h\)-principle holds in the \slr\ case.  I begin by recounting some notation.

Let \(\lt(\th^1,...,\th^6\rt)\) denote the standard basis of \(\lt(\bb{R}^6\rt)^*\) and define:
\ew
\rh_+ = \th^{123} + \th^{456},
\eew
where multi-index notation \(\th^{ij...k} = \th^i \w \th^j \w ... \w \th^k\) is used throughout this paper.  Given an oriented 6-manifold \(\M\), a 3-form \(\rh\) on \(\M\) is termed an \slr\ 3-form if for all \(x \in \M\), there exists an orientation-preserving isomorphism \(\al: \T_x\M \to \bb{R}^6\) such that \(\rh|_x = \al^*\rh_+\).  The name is motivated by the observation that the stabiliser of \(\rh_+\) in \(\GL_+(6;\bb{R})\) is isomorphic to \slr\ acting diagonally; thus, \slr\ 3-forms on \(\M\) are in bijective correspondence with \slr-structures, i.e.\ principal \slr-subbundles of the oriented frame bundle of \(\M\).  Since the \(\GL_+(6;\bb{R})\)-orbit of \(\rh_+\) in \(\ww{3}\lt(\bb{R}^6\rt)^*\) is open, \slr\ 3-forms are stable (as defined in \cite{SF&SM}) and thus all sufficiently small perturbations of an \slr\ 3-form are also of \slr-type.  Write \(\ww[+]{3}\T^*\M\) for the bundle of \slr\ 3-forms over \(\M\) and \(\Om^3_+\) for the corresponding sheaf of sections.

Write \(\CL^3_+(\M)\) for the set of closed \slr\ 3-forms on \(\M\) and, given a fixed cohomology class \(\al \in \dR{3}(\M)\), write \(\CL^3_+(\al)\) for the set of closed \slr\ 3-forms representing the class \(\al\).  More generally, given a submanifold \(A \pc \M\) (or polyhedron; see \sref{pre:slr}), let \(\rh_r\) be a closed \slr\ 3-form on \(\Op(A)\) such that \([\rh_r] = \al|_{\Op(A)} \in \dR{3}(\Op(A))\) and write:
\caw
\Om^3_+(\M;\rh_r) = \lt\{ \rh \in \Om^3_+(\M) ~\m|~ \rh|_{\Op(A)} = \rh_r\rt\};\\
\CL^3_+(\M;\rh_r) = \lt\{ \rh \in \Om^3_+(\M;\rh_r) ~\m|~ \dd\rh = 0 \rt\};\\
\CL^3_+(\al;\rh_r) = \lt\{ \rh \in \CL^3_+(\M;\rh_r) ~\m|~ [\rh] = \al \in \dR{3}(\M)\rt\}.
\caaw
For the purposes of simplicity, say that \slr\ 3-forms satisfy the relative \(h\)-principle if for every \(\M\), \(A\), \(\al\) and \(\rh_r\), the inclusions:
\ew
\CL^3_+(\al;\rh_r) \emb \CL^3_+(\M;\rh_r) \emb \Om^3_+(\M; \rh_r)
\eew
are homotopy equivalences -- although the reader should note that a slightly stronger definition of \(h\)-principle is used in the main body of this paper; see \sref{pre:slr} for details.  The main theorem of this paper is the following.

\begin{Thm}\label{slr-hP-thm}
\slr\ 3-forms satisfy the relative \(h\)-principle.  In particular, taking \(A = \es\) in the definition of the relative \(h\)-principle, the inclusions:
\ew
\CL^3_+(\al) \emb \CL^3_+(\M) \emb \Om^3_+(\M)
\eew
are homotopy equivalences and thus if \(\M\) admits any \slr\ 3-form, then every degree 3 cohomology class on \(\M\) can be represented by an \slr\ 3-form.
\end{Thm}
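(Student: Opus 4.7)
The plan is to apply the convex integration with avoidance machinery of \cite{CIwA&H46D} to the differential relation on $\Om^3_+(\M; \rh_r)$ imposed by $\dd\rh = 0$. In the standard fibrewise reduction of convex integration for closed forms, the $h$-principle is reduced to verifying an ampleness condition on an affine slice, together with a compatible avoidance statement. Concretely, in a coordinate chart with hyperplane $H \pc \bb{R}^6$ and annihilating covector $\th$, an \slr\ 3-form decomposes as $\rh = \rh_H + \th \w \ga$ with $\rh_H \in \ww{3}H^*$ and $\ga \in \ww{2}H^*$ the principal part; the required input to convex integration is then that the set $\mc{U}(\rh_H) \pc \ww{2}H^*$ of $\ga$'s making $\rh$ an \slr\ 3-form has convex hull equal to all of $\ww{2}H^*$, uniformly in $\rh_H$.

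The core technical difficulty, and the chief novelty of the paper, is verifying this ampleness. Because \slr\ is a proper subgroup of \(\SL(6;\bb{R})\), an \slr\ 3-form on $\bb{R}^6$ carries extra data beyond mere stability, namely a pair of complementary rank-3 subspaces $V_\pm \pc \bb{R}^6$ arising from the diagonal factorisation of the stabiliser, and the complement $\mc{B}(\rh_H) = \ww{2}H^* \osr \mc{U}(\rh_H)$ is controlled by the geometry of how the induced 2-planes $H \cap V_\pm$ sit inside $H$. For generic hyperplanes these 2-planes are in general position, but non-generic configurations cannot be excluded globally, and $\mc{B}(\rh_H)$ is neither a union of affine subspaces nor of sufficiently large codimension for classical codimension-based ampleness to apply. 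To circumvent this, I would introduce the new notion of \emph{macilence} advertised in the abstract -- a quantitative thinness condition on a subset of an affine space, strong enough to guarantee that its complement has full convex hull -- and verify that $\mc{B}(\rh_H)$ is macilent by a direct geometric analysis of the \(\SL(3;\bb{R})^2\)-invariants attached to the configuration $(H \cap V_+, H \cap V_-)$.

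With macilence established, the convex integration with avoidance procedure of \cite{CIwA&H46D} yields $C^0$-small homotopies from formal solutions to genuine solutions, iterated over a locally finite atlas adapted to $A$ using the patching techniques already developed in \cite{RhPfCSF}, while the transversality arguments of \cite{CIwA&H46D} handle the finitely many non-generic hyperplane strata where $H$ degenerates with respect to $V_\pm$. The relative aspects -- compatibility with $\rh_r$ on $\Op(A)$ and preservation of the cohomology class $\al$ -- are built into the construction, since each correction to the background closed form is of the shape $\dd\be$ with $\be$ supported in $\M \osr \Op(A)$. Setting $A = \es$ in the relative $h$-principle and noting that a weak equivalence preserves non-emptiness then delivers the final assertion of the theorem: provided $\Om^3_+(\M) \ne \es$, every $\al \in \dR{3}(\M)$ admits a closed \slr\ representative.
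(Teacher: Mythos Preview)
Your proposal has the right ingredients --- convex integration with avoidance, the rank-3 distributions $E_\pm$, macilence, and transversality --- but assembles them incorrectly. The central misidentification is what macilence is for. You propose to show that the complement $\mc{B}(\rh_H) = \ww{2}H^* \setminus \mc{U}(\rh_H)$ is macilent and deduce ampleness of $\mc{U}(\rh_H)$ from that. This fails: $\mc{B}(\rh_H)$ is cut out by the vanishing of the invariant $\La$ and is generically of codimension one, never macilent. More seriously, for \emph{non-generic} hyperplanes $H$ (those containing one of $E_\pm$), the set $\mc{U}(\rh_H)$ consists of two disjoint convex pieces (\pref{slr-setup}) and is genuinely non-ample; no thinness argument on its complement can repair this. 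This non-ampleness is exactly why classical convex integration fails and why avoidance is required at all.

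The paper's structure is different. The avoidance template $\cal{A}$ is built on finite \emph{configurations} $\Xi = \{\bb{B}_1,\dots,\bb{B}_k\}$ of hyperplanes, declared generic for $\rh$ when each $\bb{B}_i$ is individually generic and no two share both their $E_+$- and $E_-$-intersections. For $\bb{B} \in \Xi$ generic, the base slice $\mc{N}(\rh;\bb{B})_0$ (your $\mc{U}(\rh_H)$) is already ample by an abstract symmetry argument imported from \cite{RhPfCSF} (\lref{ab-lem}), with no macilence involved. The hard part is condition (4) of the template: one needs the \emph{smaller} set $\mc{N}(\rh;\Xi,\bb{B}) \subset \mc{N}(\rh;\bb{B})_0$, consisting of those $\nu$ for which the entire configuration $\Xi$ remains generic for the perturbed form $\th \wedge \nu + \rh$, to still be ample. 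Macilence enters only here: the bad loci $\Si_{\bb{B}'}$, $\Si^\pm_{\{\bb{B}',\bb{B}''\}}$, $\Si_{\{\bb{B}',\bb{B}''\}}$ where $\Xi$-genericity breaks are shown, via transversality of $\nu \mapsto E_{\pm,\th\wedge\nu+\rh}$ to suitable Schubert-type submanifolds of $\Gr_3(\bb{R}^6)$ (using \pref{Epm-deriv}; this is \lref{mac-lem}), to be locally contained in codimension-$\ge 2$ submanifolds of the already-ample $\mc{N}(\rh;\bb{B})_0$, and \lref{ample-vs-mac} transfers ampleness across each removal. Your sketch conflates the role of avoidance (restricting to generic $\Xi$ so the base slice is ample to begin with) with the role of macilence (ensuring the avoidance constraint itself excises only a thin subset), and omits the configuration-level structure on $\Xi$ that drives the whole argument.
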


As an application of \tref{slr-hP-thm}, recall that, since \(\SL(3;\bb{R})^2 \pc \SL(6;\bb{R})\), there is a natural Hitchin functional \(\mc{H}: \CL^3_+(\al) \to (0,\0)\) defined whenever \(\CL^3_+(\al) \ne \es\) (see \sref{pre:slr} for details).  By combining \tref{slr-hP-thm} with \cite[Thm.\ 4.1]{RhPfCSF}, one obtains:

\begin{Thm}
Let \(\M\) be any closed, oriented 6-manifold admitting \slr\ 3-forms.  Then, for each \(\al \in \dR{3}(\M)\), \(\CL^3_+(\al) \ne \es\) and the functional:
\ew
\mc{H}: \CL^3_+(\al) \to (0,\infty)
\eew
is unbounded above.  More generally, if \(\M\) is a closed, oriented \(6\)-orbifold and \(\CL^3_+(\al) \ne \es\), then the same conclusion applies.
\end{Thm}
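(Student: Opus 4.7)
The plan is to deduce both assertions as formal consequences of \tref{slr-hP-thm} together with the abstract unboundedness result \cite[Thm.\ 4.1]{RhPfCSF}, as the theorem statement itself advertises.

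For non-emptiness in the manifold case, I would argue as follows. By \tref{slr-hP-thm} applied with $A = \es$, the inclusion $\CL^3_+(\al) \emb \Om^3_+(\M)$ is a homotopy equivalence, so the two spaces have the same set of path components; in particular, one is empty \iff\ the other is. Since $\M$ admits \slr\ 3-forms by hypothesis, $\Om^3_+(\M) \ne \es$, and hence $\CL^3_+(\al) \ne \es$.

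For unboundedness of $\mc{H}$, I would then quote \cite[Thm.\ 4.1]{RhPfCSF}: judging from the usage here, this is an abstract statement asserting that for any class of closed stable forms satisfying the relative $h$-principle, the associated Hitchin-type functional is automatically unbounded above on each non-empty cohomology class. The intuitive mechanism is that, given any $\rh \in \CL^3_+(\al)$ and any $R > 0$, one may prescribe on a small coordinate ball $B \pc \M$ an \slr\ 3-form of very large pointwise Hitchin density (e.g.\ by an orientation-preserving linear ``stretching'' in a chart); this modification typically destroys closedness, but the relative $h$-principle supplies the mechanism to deform it back to an honest element of $\CL^3_+(\al)$ with Hitchin functional exceeding $R$. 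Thus the argument reduces to a black-box application of \cite[Thm.\ 4.1]{RhPfCSF} with \tref{slr-hP-thm} as the $h$-principle input.

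For the orbifold statement the non-emptiness of $\CL^3_+(\al)$ is included as an explicit hypothesis, so only unboundedness remains; this follows by exactly the same argument provided that both \tref{slr-hP-thm} and \cite[Thm.\ 4.1]{RhPfCSF} admit orbifold analogues. Since the convex integration, avoidance and transversality arguments underlying \tref{slr-hP-thm} all localise on coordinate patches and are compatible with equivariant constructions under finite local orbifold groups, such extensions are essentially routine. The main obstacle of the overall theorem is therefore \tref{slr-hP-thm} itself, whose proof is the substance of this paper; once that is in hand, the present result is a short, formal deduction.
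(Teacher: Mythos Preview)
Your proposal is correct and matches the paper's own approach: the paper simply states that the theorem follows ``by combining \tref{slr-hP-thm} with \cite[Thm.\ 4.1]{RhPfCSF}'', and your write-up unpacks exactly this combination (non-emptiness from the homotopy equivalence in \tref{slr-hP-thm} with $A=\es$, unboundedness from the cited black-box result, and the orbifold case handled by the same citations under the stated non-emptiness hypothesis).
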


The proof of \tref{slr-hP-thm} builds on the observation, taken from \cite[Lem.\ 5.2]{RhPfCSF}, that in order to prove the relative \(h\)-principle for \slr\ 3-forms, it suffices to prove the classical relative \(h\)-principle, as described in \cite[\S6.2]{ItthP}, for a family of fibred differential relations \(\cal{R}_+(a)\) defined explicitly in \sref{DR-form} (where \(a\) ranges over all possible continuous maps \(a:D^q \to \Om^3(\M)\) for all possible values of \(q \ge 0\)).  Crucially, however, unlike the relations considered in \cite{RhPfCSF}, the relation \(\cal{R}_+(a)\) is not ample and thus the \(h\)-principle for \(\cal{R}_+(a)\) cannot be proven using convex integration.  Instead, recall that a subset \(A\) of an affine space \(\bb{A}\) is termed ample if the convex hull of each path component of \(A\) is equal to \(\bb{A}\).  Given a point \(x \in \M\), a hyperplane \(\bb{B} \pc \T_x\M\) and an \slr\ 3-form \(\rh \in \ww[+]{3}\T^*_x\M\), \(\cal{R}_+(a)\) defines a subspace \(\mc{N}(\rh;\bb{B})_0 \pc \ww{2}\bb{B}^*\) (see \sref{Defn-AT}).  Whilst \(\mc{N}(\rh;\bb{B})_0 \pc \ww{2}\bb{B}^*\) is not ample for all \(\rh\) and \(\bb{B}\), for each fixed \(\rh\) the set \(\mc{N}(\rh;\bb{B})_0\) is ample for generic choices of \(\bb{B}\).  Thus, informally, the relations \(\cal{R}_+(a)\) are `close' to being ample, and hence the \(h\)-principle for the relations \(\cal{R}_+(a)\) can be proven using convex integration with avoidance.  The main task in this paper, therefore, lies in defining a suitable notion of when a hyperplane \(\bb{B}\) (and, more generally, when a finite set of distinct hyperplanes \(\Xi\)) is generic \wrt\ a given \slr\ 3-form \(\rh\), and verifying that generic hyperplanes have the necessary properties to enable convex integration with avoidance to be applied.  Specifically, it must be proven that given an \slr\ 3-form \(\rh \in \ww[+]{3}\T^*_x\M\) and a generic set \(\Xi\) of hyperplanes, \(\Xi\) is generic for `almost all' \slr\ 3-forms \(\rh'\) which have the same tangential component along \(\bb{B}\) as \(\rh\) (\lref{mac-lem}).  Establishing this fact forms the technical heart of this paper and relies on a careful analysis of the rank 3 distributions induced by an \slr\ 3-form and their interaction with generic pairs of hyperplanes (see \srefs{Epm-Deriv-Sec}--\ref{3rd-Mac-Sec}).

The results of this paper were obtained during the author's doctoral studies, which where supported by EPSRC Studentship 2261110.\\

\section{Preliminaries}

\subsection{\(\mb{\SL(3;\bb{R})^2}\) 3-forms}\label{pre:slr}

Let \(\M\) be an oriented 6-manifold and let \(\rh \in \Om^3(\M)\).  Define a homomorphism \(K_\rh: \T\M \to \T\M \ts \ww{6}\T^*\M\) by composing the map:
\ew
\bcd[row sep = 0pt]
\T\M \ar[r]& \ww{5}\T^*\M\\
v \in \T_x\M \ar[r, maps to]& (v\hk\rh|_x)\w\rh|_x
\ecd
\eew
with the canonical isomorphism \(\ww{5}\T^*\M \cong \T\M \ts \ww{6}\T^*\M\).  Define a section \(\La(\rh)\) of \(\lt(\ww{6}\T^*\M\rt)^2\) by:
\ew
\La(\rh) = \frac{1}{6}\Tr\lt(K_\rh^2\rt),
\eew
where \(\Tr\) denotes fibrewise trace.  It can be shown \cite{TGo3Fi6&7D} that \(\rh\) is an \slr\ 3-form \iff\ \(\La(\rh) > 0\) (recall that \(\lt(\ww{6}\T^*\M\rt)^2\) is naturally oriented by declaring \(s \ts s > 0\) for any non-zero \(s \in \ww{6}\T^*\M\)).  In particular, \(\rh\) induces a volume form \(vol_\rh\) on \(\M\) via the formula:
\ew
vol_\rh = \lt(\La(\rh)\rt)^\frac{1}{2}.
\eew
In the specific case where \(\M\) is closed, for each cohomology class \(\al \in \dR{3}(\M)\) one may consider the Hitchin functional:
\ew
\bcd[row sep = 0pt]
\mc{H}: \CL^3_+(\al)  \ar[r] & (0,\infty)\\
\rh \ar[r, maps to] & \bigint_\M vol_\rh
\ecd
\eew
whenever \(\CL^3_+(\al) \ne \es\), as defined in \cite{TGo3Fi6&7D}.

For an arbitrary manifold \(\M\), \(\rh\) also induces a para-complex structure \(I_\rh = vol_\rh^{-1}K_\rh\) on \(\M\), i.e.\ \(I_\rh\) is an endomorphism of \(\T\M\) satisfying \(I_\rh^2 = \Id\), such that the \(\pm1\)-eigenbundles of \(I_\rh\), denoted \(E_{\pm,\rh}\), are each rank 3.  For later calculations in this paper it is useful to note that, for the `standard' \slr\ 3-form \(\rh_+\) on \(\bb{R}^6\), the above constructions yield:
\caw
vol_{\rh_+} = \th^{123456}, \hs{5mm}  I_{\rh_+} = (e_1,e_2,e_3,e_4,e_5,e_6) \mt (e_1,e_2,e_3,-e_4,-e_5,-e_6),\\
E_+ = \<e_1,e_2,e_3\? \et E_- = \<e_4,e_5,e_6\?,
\caaw
where \((e_i)_i\) denotes the canonical basis of \(\bb{R}^6\).

Next, recall that a (possibly disconnected) subset \(A\cc\M\) is termed a polyhedron if there exists a smooth triangulation \(\cal{K}\) of \(\M\) identifying \(A\) with a subcomplex of \(\cal{K}\) (in particular, \(A\) is a closed subset of \(\M\)); examples of polyhedra include disjoint unions of submanifolds of \(\M\).  Following \cite{PDR}, write \(\Op(A)\) for an arbitrarily small but unspecified open neighbourhood of \(A\) in \(\M\), which may be shrunk whenever necessary.  Let \(D^q\) denote the \(q\)-dimensional disc (\(q \ge 0\)), let \(\al: D^q \to \dR{3}(\M)\) be a continuous map and let \(\fr{F}_0: D^q \to \Om^3_{+}(\M)\) be a continuous map such that:
\begin{enumerate}
\item For all \(s \in \del D^q\): \(\dd\fr{F}_0(s) = 0\) and \([\fr{F}_0(s)] = \al(s) \in \dR{3}(\M)\);
\item For all \(s \in D^q\): \(\dd\lt(\fr{F}_0(s)|_{\Op(A)}\rt) = 0\) and \(\lt[\fr{F}_0(s)|_{\Op(A)}\rt] = \al(s)|_{\Op(A)} \in \dR{3}(\Op(A))\).
\end{enumerate}
(Note that, since all sufficiently small open neighbourhoods of \(A\) in \(\M\) deformation retract onto \(A\), (2) is independent of the choice of \(\Op(A)\).)  As in the author's recent paper \cite{RhPfCSF}, say that \slr\ 3-forms satisfy the relative \(h\)-principle if for every \(\M\), \(A\), \(q\), \(\al\) and \(\fr{F}_0\) as above, there exists a homotopy \(\fr{F}_\pt: [0,1] \x D^q \to \Om^3_+(\M)\), constant over \(\del D^q\), satisfying:
\begin{enumerate}
\setcounter{enumi}{2}
\item For all \(s \in D^q\) and \(t \in [0,1]\): \(\fr{F}_t(s)|_{\Op(A)} = \fr{F}_0(s)|_{\Op(A)}\);
\item For all \(s \in D^q\): \(\dd\fr{F}_1(s) = 0\) and \([\fr{F}_1(s)] = \al(s) \in \dR{3}(\M)\).
\end{enumerate}
Given that \slr\ 3-forms satisfy the relative \(h\)-principle, standard homotopy-theoretic arguments (as in \cite[\S6.2.A]{ItthP}) show that the inclusions:
\ew
\CL^3_+(\al;\rh_r) \emb \CL^3_+(\M;\rh_r) \emb \Om^3_+(\M; \rh_r)
\eew
are homotopy equivalences, for any choice of \(\M\), \(A\), \(\al\) and \(\rh_r\).  Thus, the above definition is consistent with (and indeed stronger than) the notion of relative \(h\)-principle described in the introduction.\\

\subsection{Some generalities on stable forms}

For the purposes of this subsection, let \(1 \le p \le n\) and let \(\si_0\) be any stable \(p\)-form on \(\bb{R}^n\), i.e.\ any \(p\)-form such that \(\GL_+(n;\bb{R}) \1 \si_0 \pc \ww{p}\lt(\bb{R}^n\rt)^*\) is open.  Given an oriented \(n\)-dimensional real vector space \(\bb{A}\), write \(\ww[\si_0]{p}\bb{A}^*\) for the set of \(\si_0\)-forms on \(\bb{A}\) where, by analogy with the definition of \slr\ 3-forms, \(\si \in \ww{p}\bb{A}^*\) is called a \(\si_0\)-form if there exists an orientation-preserving isomorphism \(\al: \bb{A} \to \bb{R}^n\) such that \(\al^*\si_0 = \si\).  As in \cite{RhPfCSF}, given \(\ta \in \ww{p}\lt(\bb{R}^{n-1}\rt)^*\) define:
\ew
\mc{N}_{\si_0}(\ta) = \lt\{\nu \in \ww{p-1}\lt(\bb{R}^{n-1}\rt)^* ~\m|~ \th \w \nu + \ta \in \ww[\si_0]{p}\lt(\bb{R} \ds \bb{R}^{n-1}\rt)^* \rt\} \pc \ww{p-1}\lt(\bb{R}^{n-1}\rt)^*,
\eew
where \(\th\) is the standard annihilator of \(\bb{R}^{n-1} \pc \bb{R} \ds \bb{R}^{n-1}\).  The aim of this subsection is to briefly recall some key properties of the set \(\mc{N}_{\si_0}(\ta)\).

Let \(Emb\lt(\bb{R}^{n-1},\bb{R}^n\rt)\) denote the space of linear embeddings \(\io:\bb{R}^{n-1} \to \bb{R}^n\) and consider the map:
\ew
\bcd[row sep = 0pt]
\cal{T}_{\si_0}: Emb\lt(\bb{R}^{n-1},\bb{R}^n\rt) \ar[r] & \ww{p}\lt(\bb{R}^{n-1}\rt)^*\\
\io \ar[r, maps to] & \io^*\si_0\hs{1.5pt}.
\ecd
\eew
\(\GL_+(n-1;\bb{R})\) acts on \(Emb\lt(\bb{R}^{n-1},\bb{R}^n\rt)\) via pre-composition, and the quotient \(\rqt{Emb\lt(\bb{R}^{n-1},\bb{R}^n\rt)}{\GL_+(n-1;\bb{R})}\) may naturally be identified with the oriented Grassmannian \(\oGr_{n-1}\lt(\bb{R}^n\rt)\).  Given \(f\in\GL_+(n-1;\bb{R})\), a direct computation shows:
\ew
\cal{T}_{\si_0}(\io \circ f) = f^*\io^*\lt(\si_0\rt) = f^*\cal{T}_{\si_0}(\io).
\eew
Thus, \(\cal{T}_{\si_0}\) descends to a map \(\oGr_{n-1}\lt(\bb{R}^n\rt) \to \rqt{\ww{p}\lt(\bb{R}^{n-1}\rt)^*}{\GL_+(n-1;\bb{R})}\).  Write \(\mc{S}(\si_0)\) for the stabiliser of \(\si_0\) in \(\GL_+(n;\bb{R})\) and note that \(\mc{S}(\si_0)\) acts on \(Emb\lt(\bb{R}^{n-1},\bb{R}^n\rt)\) (and hence on \(\oGr_{n-1}\lt(\bb{R}^n\rt)\)) on the left via post-composition.  Clearly, \(\cal{T}_{\si_0}\) is invariant under this action and thus \(\cal{T}_{\si_0}\) descends further to a map:
\ew
\bcd
\mc{T}_{\si_0}:\lqt{\oGr_{n-1}\lt(\bb{R}^n\rt)}{\mc{S}(\si_0)} \ar[r] & \rqt{\ww{p}\lt(\bb{R}^{n-1}\rt)^*}{\GL_+(n-1,\bb{R})}.
\ecd
\eew
The following two results will be utilised in the proof of \tref{slr-hP-thm}.

\begin{Prop}[{\cite[Prop.\ 6.2]{RhPfCSF}}]\label{open-to-open}
Let \(\si_0 \in \ww{p}\lt(\bb{R}^n\rt)^*\) be stable and equip the spaces \(\lqt{\oGr_{n-1}\lt(\bb{R}^n\rt)}{\mc{S}(\si_0)}\) and \(\rqt{\ww{p}\lt(\bb{R}^{n-1}\rt)^*}{\GL_+(n-1,\bb{R})}\) with their natural quotient topologies.  Then, \(\mc{T}_{\si_0}\) is an open map.  In particular, if \(\mc{O} \in \lqt{\oGr_{n-1}\lt(\bb{R}^n\rt)}{\mc{S}(\si_0)}\) is an open orbit, then \(\mc{T}_{\si_0}(\mc{O})\) is also an open orbit, i.e.\ the orbit of a stable p-form on \(\bb{R}^{n-1}\).
\end{Prop}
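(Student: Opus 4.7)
The plan is to first show that the lifted map $\cal{T}_{\si_0}: Emb\lt(\bb{R}^{n-1},\bb{R}^n\rt) \to \ww{p}\lt(\bb{R}^{n-1}\rt)^*$ is itself an open map (indeed, a submersion), and then to deduce openness of the descended map $\mc{T}_{\si_0}$ via the standard formalism of open quotient maps.

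For the first step, I would fix $\io \in Emb\lt(\bb{R}^{n-1},\bb{R}^n\rt)$ and exploit the transitivity of the $\GL_+(n;\bb{R})$-action on $Emb\lt(\bb{R}^{n-1},\bb{R}^n\rt)$ by post-composition.  Every tangent vector at $\io$ therefore arises as $X \circ \io$ for some $X \in \fr{gl}(n;\bb{R})$, and the chain rule yields:
\caw
d\cal{T}_{\si_0}|_\io(X \circ \io) = \io^*(X \1 \si_0),
\caaw
where $X \1 \si_0 \in \ww{p}\lt(\bb{R}^n\rt)^*$ denotes the infinitesimal action of $X$ on $p$-forms.  Stability of $\si_0$, i.e.\ the openness of the orbit $\GL_+(n;\bb{R}) \1 \si_0$ in $\ww{p}\lt(\bb{R}^n\rt)^*$, is precisely the assertion that the linear map $X \mt X \1 \si_0$ surjects onto $\ww{p}\lt(\bb{R}^n\rt)^*$.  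Composing with the linear map $\io^*: \ww{p}\lt(\bb{R}^n\rt)^* \to \ww{p}\lt(\bb{R}^{n-1}\rt)^*$ (which is surjective because $\io$ is a linear embedding), one concludes that $d\cal{T}_{\si_0}|_\io$ is surjective.  As $\io$ was arbitrary, $\cal{T}_{\si_0}$ is a submersion, hence an open map.

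For the second step, both quotient maps $q_1: Emb\lt(\bb{R}^{n-1},\bb{R}^n\rt) \to \lqt{\oGr_{n-1}\lt(\bb{R}^n\rt)}{\mc{S}(\si_0)}$ and $q_2: \ww{p}\lt(\bb{R}^{n-1}\rt)^* \to \rqt{\ww{p}\lt(\bb{R}^{n-1}\rt)^*}{\GL_+(n-1;\bb{R})}$ are open, being orbit maps for continuous group actions (with $q_1$ factoring as the principal $\GL_+(n-1;\bb{R})$-bundle $Emb \to \oGr_{n-1}\lt(\bb{R}^n\rt)$ followed by the $\mc{S}(\si_0)$-orbit projection).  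Since $q_1$ is surjective, for any open $U \pc \lqt{\oGr_{n-1}\lt(\bb{R}^n\rt)}{\mc{S}(\si_0)}$ one has $\mc{T}_{\si_0}(U) = q_2\lt(\cal{T}_{\si_0}(q_1^{-1}(U))\rt)$, which is open as a composition of open operations.  Hence $\mc{T}_{\si_0}$ is open.  The ``in particular'' clause then follows: an open orbit $\mc{O}$ is precisely an open singleton in the source quotient, whose image $\mc{T}_{\si_0}(\mc{O})$ is therefore an open singleton in the target quotient, and such an open singleton corresponds, by definition of the quotient topology, to an open $\GL_+(n-1;\bb{R})$-orbit in $\ww{p}\lt(\bb{R}^{n-1}\rt)^*$, i.e.\ to the orbit of a stable $p$-form.

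The main obstacle is purely conceptual rather than calculational: one must recognise that stability of $\si_0$ is, infinitesimally, the surjectivity of the orbit map $\fr{gl}(n;\bb{R}) \to \ww{p}\lt(\bb{R}^n\rt)^*$, $X \mt X \1 \si_0$, which is the observation that reframes the statement as a clean submersion argument.  Once this reformulation is in place, the remainder reduces to routine point-set topology of open quotient maps.
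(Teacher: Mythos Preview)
The paper does not prove this proposition; it is quoted without proof from \cite[Prop.\ 6.2]{RhPfCSF}, so there is no in-paper argument to compare against.

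Your proof is correct. The key step---recognising that stability of \(\si_0\) is exactly the surjectivity of the infinitesimal orbit map \(\fr{gl}(n;\bb{R}) \to \ww{p}(\bb{R}^n)^*\), and composing with the surjective pullback \(\io^*\) to obtain that \(\cal{T}_{\si_0}\) is a submersion---is the natural approach, and the subsequent descent through the two open quotient maps is routine and correctly argued. One small stylistic remark: the claim that orbit maps for continuous group actions are open is true (since \(\pi^{-1}(\pi(U)) = \bigcup_{g} gU\) is a union of homeomorphic translates of \(U\)), but is worth stating explicitly rather than asserting, since it is the only point-set fact the second step rests on.
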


\begin{Lem}[{\cite[Prop.\ 6.4 and Lems.\ 6.7, 6.8 \& 6.9]{RhPfCSF}}]\label{ab-lem}
Suppose there exists an orientation-reversing automorphism \(F \in \GL(n;\bb{R})\) such that \(F^*\si_0 = \si_0\).  If \(\mc{O} \in \lqt{\oGr_{n-1}(\bb{R}^n)}{\mc{S}(\si_0)}\) satisfies \(\mc{T}_{\si_0}^{-1}(\{\mc{T}_{\si_0}(\mc{O})\}) = \{\mc{O}\}\) and moreover if the stabiliser in \(\GL_+(n-1;\bb{R})\) of some (equivalently every) \(\ta \in \mc{T}_{\si_0}(\mc{O})\) is connected, then for all \(\ta \in \mc{T}_{\si_0}(\mc{O})\), the space \(\mc{N}_{\si_0}(\ta) \pc \ww{p-1}\lt(\bb{R}^{n-1}\rt)^*\) is path-connected and:
\ew
\Conv(\mc{N}_{\si_0}(\ta)) = \ww{p-1}\lt(\bb{R}^{n-1}\rt)^*,
\eew
where \(\Conv\) denotes the convex hull.\\
\end{Lem}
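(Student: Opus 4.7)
The plan is to deduce both conclusions from three group-theoretic invariance properties of $\mc{N}_{\si_0}(\ta) \pc \ww{p-1}\lt(\bb{R}^{n-1}\rt)^*$.  First, $\mc{N}_{\si_0}(\ta)$ is open, as the preimage of the open $\GL_+(n;\bb{R})$-orbit of $\si_0$ under the continuous map $\nu \mt \th \w \nu + \ta$, and non-empty, since $\ta \in \mc{T}_{\si_0}(\mc{O})$.  Using explicit automorphisms of $\bb{R}^n = \bb{R}e_1 \ds \bb{R}^{n-1}$, I would then establish: \emph{(i)} $\mc{N}_{\si_0}(\ta)$ is invariant under $\nu \mt c\nu$ for $c > 0$, via the $\GL_+(n;\bb{R})$-element $e_1 \mt c^{-1}e_1$ restricting to the identity on $\bb{R}^{n-1}$; \emph{(ii)} $\mc{N}_{\si_0}(\ta)$ is invariant under $\nu \mt -\nu$, since the orientation-reversing element $e_1 \mt -e_1$ pulls $\th \w \nu + \ta$ back to $\th \w (-\nu) + \ta$, and the existence of $F \in \GL_-(n;\bb{R}) \cap \mc{S}(\si_0)$ forces the $\GL_+$- and $\GL_-$-orbits of $\si_0$ to coincide, guaranteeing this pullback is still a $\si_0$-form; and \emph{(iii)} $\mc{N}_{\si_0}(\ta)$ is invariant under $\nu \mt \nu + c \hk \ta$ for $c \in \bb{R}^{n-1}$ and under $\nu \mt A^*\nu$ for $A \in \Stab_{\GL_+(n-1;\bb{R})}(\ta)$, via block-triangular elements $\bpm 1 & 0 \\ c & A \epm \in \GL_+(n;\bb{R})$.

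\textbf{Convex hull.} Invariances \emph{(i)} and \emph{(ii)} alone suffice for the convex-hull claim.  Fix $\nu_0 \in \mc{N}_{\si_0}(\ta)$ and a small open ball $B$ centred at $0$ with $\nu_0 + B \pc \mc{N}_{\si_0}(\ta)$; by \emph{(ii)}, also $-\nu_0 - B \pc \mc{N}_{\si_0}(\ta)$.  The midpoints $\tfrac{1}{2}\lt((\nu_0 + v_1) + (-\nu_0 - v_2)\rt) = \tfrac{1}{2}(v_1 - v_2)$ sweep out an open neighbourhood of $0$ inside $\Conv(\mc{N}_{\si_0}(\ta))$ as $v_1, v_2$ range over $B$.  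By \emph{(i)}, this convex hull is itself invariant under positive dilations, so the above neighbourhood of $0$ dilates to cover $\ww{p-1}\lt(\bb{R}^{n-1}\rt)^* \setminus \{0\}$; and $0$ itself lies in the convex hull as the midpoint of $\nu$ and $-\nu$ for any $\nu \in \mc{N}_{\si_0}(\ta)$, giving the full equality.

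\textbf{Path-connectedness.} The path-connectedness conclusion is more delicate.  The plan is to realise $\mc{N}_{\si_0}(\ta)$ as the image of a tower of surjections with path-connected fibres.  Define $E_\ta = \lt\{h \in \GL_+(n;\bb{R}) ~\m|~ \io_0^* h^* \si_0 = \ta\rt\}$; sending $h \in E_\ta$ to the unique $\nu$ with $h^*\si_0 = \th \w \nu + \ta$ gives a continuous surjection $E_\ta \epi \mc{N}_{\si_0}(\ta)$, so it suffices to show $E_\ta$ is path-connected.  The projection $E_\ta \to E'_\ta := \lt\{\io:\bb{R}^{n-1}\emb\bb{R}^n ~\m|~ \io^*\si_0 = \ta\rt\}$, $h \mt h \circ \io_0$, has fibres equal to open half-spaces of $\bb{R}^n$ (the orientation-preserving choices of $h(e_1)$) and hence is contractible over each point, reducing the problem to path-connectedness of $E'_\ta$.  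Finally, $E'_\ta$ is a principal $\Stab_{\GL_+(n-1;\bb{R})}(\ta)$-torsor (under pre-composition) over the set of oriented hyperplanes $[H] \in \oGr_{n-1}(\bb{R}^n)$ with $\mc{T}_{\si_0}([H]) = \mc{T}_{\si_0}(\mc{O})$, which by the uniqueness hypothesis coincides with $\mc{O}$ itself; since $\Stab_{\GL_+(n-1;\bb{R})}(\ta)$ is connected by hypothesis, the remaining task is path-connectedness of $\mc{O} \pc \oGr_{n-1}(\bb{R}^n)$.  \textbf{This last step is the main obstacle:} since $\mc{S}(\si_0)$ may fail to be connected (witness the motivating case $\mc{S}(\rh_+) \cong \SL(3;\bb{R})^2 \sdp \bb{Z}_2$), $\mc{O}$ could a priori split into multiple identity-component orbits.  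My plan for closing this gap is to combine $F$ with the uniqueness hypothesis $\mc{T}_{\si_0}^{-1}(\{\mc{T}_{\si_0}(\mc{O})\}) = \{\mc{O}\}$ to produce an orientation-reversing element of $\mc{S}(\si_0)$ stabilising some oriented hyperplane $[H_0] \in \mc{O}$, which would force the identity-component action to be transitive and thus $\mc{O}$ to be path-connected.
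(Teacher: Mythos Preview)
The paper does not prove this lemma; it is quoted from \cite{RhPfCSF} as a combination of four separate results there, so there is no in-paper proof to compare against.  Your convex-hull argument via invariances (i) and (ii) is correct and efficient.

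The path-connectedness argument has a genuine gap exactly where you flag it.  The tower \(E_\ta \to E'_\ta \to \mc{O}\) is set up correctly, and the first step of your plan also works: since \(\mc{T}_{\si_0}\) is manifestly \(F\)-invariant and \(\mc{O}\) is by hypothesis the unique \(\mc{S}(\si_0)\)-orbit over its image, \(F\mc{O} = \mc{O}\), so \(g := s^{-1}F\) for suitable \(s \in \mc{S}(\si_0)\) is an orientation-reversing element of \(\Stab_{\GL(n;\bb{R})}(\si_0)\) fixing \([H_0]\).  What does not follow is the final inference.  Connectedness of \(\mc{O} \cong \mc{S}(\si_0)/K\), with \(K = \Stab_{\mc{S}(\si_0)}([H_0])\), is equivalent to surjectivity of \(\pi_0(K) \to \pi_0(\mc{S}(\si_0))\), i.e.\ to every component of \(\mc{S}(\si_0)\) meeting \(K\).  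Your element \(g\) lies in \(\GL_-\) and hence outside \(\mc{S}(\si_0)\) altogether; the only element of \(K\) it hands you is \(g^2\), over whose component in \(\mc{S}(\si_0)\) you have no control.  Nothing in the stated hypotheses directly links \(\pi_0(\mc{S}(\si_0))\) to \(K\), so a further idea is needed here---your sketch does not yet supply one.  (Note incidentally that ``orientation-reversing element of \(\mc{S}(\si_0)\)'' is a contradiction in terms, since \(\mc{S}(\si_0) \subset \GL_+(n;\bb{R})\) by definition; presumably you mean the full stabiliser in \(\GL(n;\bb{R})\).)
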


\subsection{Configuration spaces for hyperplanes}\label{conf-intro}

This is the first of two subsections which recount convex integration with avoidance, introduced in \cite{CIwA&H46D} (although note that the presentation and notation used below differs from that in \cite{CIwA&H46D}).  Let \(\bb{A}\) be an \(n\)-dimensional vector space and write \(\Gr_{n-1}^{(\infty)}(\bb{A})\) for the collection of all finite subsets of \(\Gr_{n-1}(\bb{A})\).  \(\Gr_{n-1}^{(\infty)}(\bb{A})\) is termed the configuration space for hyperplanes in \(\bb{A}\) and can be given a natural `smooth structure' as follows.  For any \(k\ge 1\), consider the manifold \(\prod_1^k \Gr_{n-1}(\bb{A})\) parameterising ordered \(k\)-tuples of hyperplanes in \(\bb{A}\). The symmetric group \(Sym_k\) acts on \(\prod_1^k \Gr_{n-1}(\bb{A})\) by permuting the factors, however this action is not free and thus the resulting quotient is not a smooth manifold, but rather an orbifold. Now define the subset:
\ew
\lt(\prod_1^k \Gr_{n-1}(\bb{A})\rt)_{sing} = \lt\{(\bb{B}_1,...,\bb{B}_k) \in \prod_1^k \Gr_{n-1}(\bb{A}) ~\m|~ \bb{B}_i = \bb{B}_j \text{ for some \(i\ne j\)} \rt\}
\eew
of tuples whose elements are not distinct. This set consists precisely of those elements of \(\prod_1^k \Gr_{n-1}(\bb{A})\) with a non-trivial stabiliser in \(Sym_k\) and may naturally be regarded as a stratified submanifold of \(\prod_1^k \Gr_{n-1}(\bb{A})\) of codimension \(n-1 = \dim\Gr_{n-1}(\bb{A})\). The complement of this set:
\ew
\tld{\mns{\prod_1^k} \Gr_{n-1}(\bb{A})} = \lt.\prod_1^k \Gr_{n-1}(\bb{A})\middle\backslash\lt(\prod_1^k \Gr_{n-1}(\bb{A})\rt)_{sing}\rt.
\eew
is thus an open and dense subset of \(\prod_1^k \Gr_{n-1}(\bb{A})\) on which the group \(Sym_k\) acts freely. In particular, the space \(\rqt{\tld{\prod_1^k \Gr_{n-1}(\bb{A})}}{Sym_k}\) is naturally a smooth manifold. Denote this manifold by \(\Gr_{n-1}^{(k)}(\bb{A})\) and denote the natural quotient map by \(\si:\tld{\prod_1^k \Gr_{n-1}(\bb{A})} \to \Gr_{n-1}^{(k)}(\bb{A})\).  Since \(\Gr_{n-1}^{(\infty)}(\bb{A}) = \coprod_{k=1}^\infty \Gr_{n-1}^{(k)}(\bb{A})\) as sets, \(\Gr_{n-1}^{(\infty)}(\bb{A})\) inherits a natural topology such that each connected component is a smooth manifold.\\

\subsection{Convex integration with avoidance}\label{ISoCIwA}

Let \(\pi: E \to \M\) be a vector bundle.  Write \(E^{(1)}\) for the first jet bundle of \(E\); explicitly, given a connection \(\nabla\) on \(E\), by \cite[\S9, Cor.\ to Thm.\ 7]{DOoVB} one can identify \(E^{(1)} \cong E \ds \lt(\T^*\M \ts E\rt)\) such that the following diagram commutes:
\ew
\bcd[row sep = 3mm]
\Ga\lt(\M,E^{(1)}\rt) \ar[rr, "\mLa{\cong}"] & & \Ga\lt(\M,E \ds \lt(\T^*\M \ts E\rt)\rt)\\
& \Ga(\M,E) \ar[ru, "\mLa{s \mt s \ds \nabla s}"'] \ar[lu, "\mLa{j_1}"] &
\ecd
\eew
where \(\Ga(\M,-)\) denotes the space of sections over \(\M\) and \(j_1\) is the map assigning to a section of \(E\) its corresponding \(1\)-jet; write \(p_1: E^{(1)} \to E\) for the natural projection.  In particular, note that \(E^{(1)}\) naturally has the structure of a vector bundle over \(\M\).  More generally, given \(q \ge 0\), write \(E_{D^q}\) for the pullback of the vector bundle \(E\) along the projection \(D^q \x \M \to \M\); explicitly, \(E_{D^q}\) is the vector bundle \(D^q \x E \oto{\Id \x \pi} D^q \x \M\).  In this paper, a section of \(E_{D^q}\) shall refer to a continuous map \(s: D^q \x \M \to D^q \x E\) satisfying \(\pi_{E_{D^q}} \circ s = \Id_{D^q \x \M}\) and depending smoothly on \(x \in \M\); in particular, sections of \(E_{D^q}\) over \(D^q \x \M\) correspond to continuous maps \(D^q \to \Ga(E,\M)\).  Write \(E^{(1)}_{D^q}\) for the vector bundle \(\lt(E^{(1)}\rt)_{D^q}\) and note that \(E^{(1)}_{D^q} \ne \lt(E_{D^q}\rt)^{(1)}\), since only derivatives in the `\(\M\)-direction' are considered in the bundle \(E^{(1)}_{D^q}\).  A section of \(E^{(1)}_{D^q}\) is termed holonomic if it is the \(1\)-jet of a section of \(E_{D^q}\), i.e.\ if it can be written as \(s \ds \nabla s\) for some section \(s\) of \(E_{D^q}\).  Now write \(p_1\) for the projection \(E^{(1)} \cong E \ds \lt(\T^*\M \ts E\rt) \to E\) and fix \(x \in \M\).  For any \(e \in E_x\), the fibre of the map \(p_1:E^{(1)} \to E\) over \(e\) is the space \(p_1^{-1}(e) = \{e\} \x \T^*_x\M \ts E_x \cong \{e\} \x \Hom(\T_x\M,E_x)\).  Each codimension-1 hyperplane \(\bb{B} \pc \T_x\M\) and linear map \(\la:\bb{B}\to E_x\) defines a so-called principal subspace of \(p_1^{-1}(e)\), given by:
\e\label{PiBl-defn}
\Pi_e(\bb{B},\la) &= \{e\} \x \lt\{L\in\Hom(\T_x\M,E_x)~\middle|~L|_\bb{B} = \la\rt\}\\
&= \{e\} \x \Pi(\bb{B},\la).
\ee
\(\Pi_e(\bb{B},\la)\) is an affine subspace of \(p_1^{-1}(e)\) modelled on \(E_x\) (though not, in general, a linear subspace; note also that changing the choice of connection changes the identification \(p_1^{-1}(e) = \{e\} \x \T^*_p\M \ts E_p\) by an affine linear map and so the collection of principal subspaces of \(p_1^{-1}(e)\) is independent of the choice of connection).

A fibred differential relation (of order 1) on \(D^q\)-indexed families of sections of \(E\) is simply a subset \(\cal{R} \cc E^{(1)}_{D^q}\).  \(\cal{R}\) is termed an open relation if it is open as a subset of \(E^{(1)}_{D^q}\).  Say that a fibred relation \(\cal{R}\) satisfies the relative \(h\)-principle if for every polyhedron \(A\) and every section \(F_0\) of \(\cal{R}\) over \(D^q \x \M\) which is holonomic over \(\lt(\del D^q \x \M\rt) \cup \lt(D^q \x \Op(A)\rt)\), there exists a homotopy \((F_t)_{t \in [0,1]}\) of sections of \(\cal{R}\), constant over \(\lt(\del D^q \x \M\rt) \cup \lt(D^q \x \Op(A)\rt)\), such that \(F_1\) is a holonomic section of \(\cal{R}\).  (The reader will note the similarity between this definition and the notion of the relative \(h\)-principle for \slr\ 3-forms stated in \sref{pre:slr}.)

Now, consider the vector bundles \(\T\M\) over \(\M\) and \(\T\M_{D^q}\) over \(D^q \x \M\).  Applying the construction of \sref{conf-intro} to each fibre of these vector bundles yields bundles \(\Gr_{n-1}^{(\0)}(\T\M)\) and \(\Gr_{n-1}^{(\0)}(\T\M_{D^q})\) over \(\M\) and \(D^q \x \M\) respectively (note that \(\Gr_{n-1}^{(\0)}(\T\M_{D^q})\) is simply the bundle \(D^q \x \Gr_{n-1}^{(\0)}(\T\M) \to D^q \x \M\)).  Write \(\cal{R} \x_{(D^q \x \M)} \Gr_{n-1}^{(\infty)}(\T\M_{D^q})\) for the bundle over \(D^q \x \M\) given by taking the fibrewise product of \(\cal{R}\) and \(\Gr_{n-1}^{(\infty)}(\T\M_{D^q})\); explicitly:
\ew
\cal{R} \x_{(D^q \x \M)} \Gr_{n-1}^{(\infty)}(\T\M_{D^q}) &=\\
& \hs{-33mm} \lt\{[(s,T),(s,\Xi)] \in \cal{R} \x \Gr_{n-1}^{(\infty)}(\T\M_{D^q}) \cc \lt(D^q \x E^{(1)}\rt) \x \lt(D^q \x \Gr_{n-1}^{(\infty)}(\T\M)\rt) ~\m|~ \pi_{E^{(1)}}(T) = \pi_{\Gr_{n-1}^{(\infty)}(\T\M)}(\Xi)\rt\},
\eew
where \(\pi_{E^{(1)}}\) and \(\pi_{\Gr_{n-1}^{(\infty)}(\T\M)}\) denote the bundle projections \(E^{(1)} \to \M\) and \(\Gr_{n-1}^{(\infty)}(\T\M) \to \M\) respectively.  Let \(\cal{A} \cc \cal{R} \x_{(D^q \x \M)} \Gr_{n-1}^{(\infty)}(\T\M_{D^q})\). Given \(s \in D^q\), \(x\in\M\) and a configuration of hyperplanes \((s,\Xi) \in \Gr_{n-1}^{(\0)}(\T\M_{D^q})_{(s,x)} = \{s\} \x \Gr_{n-1}^{(\infty)}(\T_x\M)\), there is a natural subset \(\cal{A}(s,\Xi) \cc E^{(1)}_x\) given by:
\ew
\cal{A}(s,\Xi) = \{ T \in E^{(1)}_x ~|~ [(s,T),(s,\Xi)] \in \cal{A}_{(s,x)}\}.
\eew
Similarly, given a 1-jet \((s,T) \in \cal{R}_{(s,x)}\), there is a natural subset \(\cal{A}(s,T) \cc \Gr_{n-1}^{(\infty)}(\T_x\M)\) given by:
\ew
\cal{A}(s,T) = \{ \Xi \in \Gr_{n-1}^{(\infty)}(\T_x\M) ~|~ [(s,T),(s,\Xi)] \in \cal{A}_{(s,x)} \}.
\eew

\begin{Defn}[{Cf.\ \cite[Defn.\ 4.1]{CIwA&H46D}}]\label{CIwA-defn}
Let \(\M\), \(q\) and \(\cal{R}\) be as above.  Say \(\cal{A} \cc \cal{R} \x_{D^q \x \M} \Gr_{n-1}^{(\infty)}(\T\M_{D^q})\) is a fibred avoidance pre-template for \(\cal{R}\) if:
\begin{enumerate}
\item \(\cal {A} \cc \cal{R} \x_{(D^q \x \M)} \Gr_{n-1}^{(\infty)}(\T\M_{D^q})\) is an open subset;
\item For all \(s \in D^q\), \(x \in \M\) and all pairs \(\Xi' \cc \Xi \in \Gr_{n-1}^{(\infty)}(\T_x\M)\), there is an inclusion \(\cal{A}(s,\Xi) \cc \cal{A}(s,\Xi')\).
\end{enumerate}
Say that \(\cal{A}\) is a fibred avoidance template for \(\cal{R}\) if it also satisfies the following two conditions:
\begin{enumerate}
\setcounter{enumi}{2}
\item For all \(s \in D^q\), \(x \in \M\) and \((s,T) \in \cal{R}_{(s,x)}\), the subset \(\cal{A}(s,T) \cc \Gr_{n-1}^{(\infty)}(\T_x\M)\) is dense (and open);
\item For all \(s \in D^q\), \(x \in \M\), \(\Xi \in \Gr_{n-1}^{(\infty)}(\T_x\M)\), \(\bb{B} \in \Xi\), \(\la \in \Hom(\bb{B},E_x)\) and \(e \in E_x\), the subset \(\cal{A}(s,\Xi) \cap \Pi_e(\bb{B},\la) \cc \Pi_e(\bb{B},\la)\) is ample, meaning that either \(\cal{A}(s,\Xi) \cap \Pi_e(\bb{B},\la) = \es\), or every path component of \(\cal{A}(s,\Xi) \cap \Pi_e(\bb{B},\la)\) has convex hull equal to \(\Pi_e(\bb{B},\la)\).
\end{enumerate}
\end{Defn}

\begin{Thm}[{\cite[Thm.\ 5.1; see also Lem.\ 4.7]{CIwA&H46D}}]\label{CIwA-Thm}
Let \(\M\) be an \(n\)-manifold, let \(E \to \M\) be a vector bundle, let \(q \ge 0\) and let \(\cal{R}\cc E^{(1)}_{D^q}\) be an open fibred differential relation on sections of \(E\). Suppose that \(\cal{R}\) admits a fibred avoidance template \(\cal{A} \cc \cal{R} \x_{(D^q \x \M)} \Gr_{n-1}^{(\infty)}(\T\M_{D^q})\). Then, \(\cal{R}\) satisfies the relative \(h\)-principle.
\end{Thm}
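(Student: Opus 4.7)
\medskip

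The plan is to adapt Gromov's proof strategy for the classical \(h\)-principle for open ample relations, using convex hull extensions via iterated one-dimensional convex integration, but at each stage replacing raw ampleness of a principal subspace by ampleness inside the slice cut out by the avoidance template. The whole argument is parametric in \(s \in D^q\); in what follows, I will suppress \(s\) to lighten notation, with the understanding that every construction is carried out continuously in \(s\) and is fixed over \(\del D^q \cup \Op(A)\).

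First I would reduce the global problem to a sequence of local ones. Fix a smooth triangulation \(\cal{K}\) of \(\M\) refining \(A\) and sufficiently fine that each simplex is contained in a coordinate chart. Proceeding by induction on the skeleton of \(\cal{K}\), the inductive hypothesis is that, after a homotopy of sections of \(\cal{R}\) constant on a neighbourhood of the already-processed subcomplex, the current section \(F\) of \(\cal{R}\) is holonomic on \(\Op(\cal{K}^{(k)} \cup A)\). The extension step requires, for each \((k+1)\)-simplex \(\De\), upgrading \(F\) to be holonomic on \(\Op(\De)\) while remaining a section of \(\cal{R}\) throughout. This reduces the theorem to a purely local statement on a coordinate ball \(U \cc \M\): given a section \(F = s \ds L\) of \(\cal{R}\) over \(U\), holonomic on \(\Op(\del U)\), produce a homotopy through sections of \(\cal{R}\), rel \(\Op(\del U)\), terminating at a holonomic section.

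Next, on the local chart \(U\), I would perform \(n\) successive one-dimensional convex integrations, one for each coordinate direction. At the \(j\)\thh\ step, let \(\bb{B}_j \pc \T U\) be the hyperplane field annihilating the \(j\)\thh\ coordinate direction, and let \(\la_j = L|_{\bb{B}_j}\). Using condition (3) of \dref{CIwA-defn}, for each \(x\) in a neighbourhood of \(\De\) select a configuration \(\Xi_j(x) \ni \bb{B}_j|_x\) with \(F(x) \in \cal{A}(\Xi_j(x))\); by the openness condition (1) this choice can be made locally constant on small patches, and by the monotonicity condition (2) we may combine the choices made at previous steps into a single configuration \(\Xi(x) \py \bigcup_{i \le j} \Xi_i(x)\) still witnessing \(F(x) \in \cal{A}(\Xi(x))\). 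By condition (4), \(\cal{A}(\Xi(x)) \cap \Pi_{s(x)}(\bb{B}_j|_x, \la_j(x))\) is ample inside its principal subspace. The classical fundamental lemma of one-dimensional convex integration then produces a rapidly oscillating modification of \(s\) in the coordinate direction transverse to \(\bb{B}_j\) whose new \(1\)-jet everywhere lies in \(\cal{A}(\Xi(x)) \pc \cal{R}\) and whose partial derivative transverse to \(\bb{B}_j\) now agrees with the formal one up to arbitrarily small \(C^0\)-error. After \(n\) iterations, the derivative matches in every coordinate direction, so the resulting section is holonomic.

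The patching across simplices and the continuous dependence on \(s \in D^q\) are handled by the usual compactness and bump-function machinery of Gromov's convex hull extensions: cutoff functions localise each oscillation to the interior of the simplex under treatment, openness of \(\cal{A}\) provides a uniform margin so that small perturbations stay inside the avoidance set, and the \(C^0\)-smallness of the convex integration step guarantees that the image of \(F\) remains inside the open relation \(\cal{R}\) throughout the homotopy. The main obstacle, and the raison d'être of the avoidance formalism, is precisely the interaction between conditions (2), (3) and (4) when assembling local choices of \(\Xi\) into a globally coherent one. Naively, different simplices and different coordinate directions want different configurations \(\Xi\); the monotonicity condition (2) is what rescues the construction, since enlarging \(\Xi\) only shrinks \(\cal{A}(\Xi)\) while (by (4)) preserving the ampleness needed for each one-dimensional step, and condition (3) guarantees that any formal solution lies in sufficiently many such enlarged sets. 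Getting this bookkeeping right — in particular ensuring that the chosen \(\Xi\) varies continuously in \((s,x)\) and is compatible with the oscillations already introduced at earlier stages — is where the bulk of the technical work would lie.
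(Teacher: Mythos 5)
First, a remark on scope: the paper does not prove this theorem at all --- it is quoted verbatim from \cite{CIwA&H46D} (Thm.\ 5.1 there, itself a specialisation of Gromov's convex hull extension machinery), so there is no internal proof to compare your sketch against; I am therefore assessing it on its own terms.

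Your overall architecture (induction over a triangulation, then iterated one-dimensional convex integration performed inside the slices cut out by the template, with monotonicity ensuring that earlier choices survive later steps) is the right shape, but there is a genuine gap at the very first selection step: you fix the coordinate hyperplane fields \(\bb{B}_j\) in advance and then claim, via condition (3) of \dref{CIwA-defn}, to ``select a configuration \(\Xi_j(x) \ni \bb{B}_j|_x\) with \(F(x) \in \cal{A}(\Xi_j(x))\)''. This is not what condition (3) gives, and it is in general impossible. Indeed, by the monotonicity condition (2), if some configuration \(\Xi \ni \bb{B}_j|_x\) satisfies \(F(x) \in \cal{A}(s,\Xi)\), then already \(F(x) \in \cal{A}(s,\{\bb{B}_j|_x\})\), i.e.\ the singleton \(\{\bb{B}_j|_x\}\) must lie in \(\cal{A}(s,F(x))\); condition (3) only asserts that \(\cal{A}(s,F(x))\) is open and \emph{dense} in \(\Gr_{n-1}^{(\infty)}(\T_x\M)\), not that it contains every singleton, so a prescribed coordinate hyperplane may admit no avoiding configuration whatsoever. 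This failure is precisely the situation the avoidance formalism is designed for: in the application in this paper, \(\{\bb{B}\} \in \cal{A}(s,T)\) \iff\ \(\bb{B}\) is generic for \(\mc{D}(T)+a(s)\), and hyperplanes containing \(E_{\pm}\) are not; worse, by the Remark following \lref{gen-stab-1}, non-genericity of the hyperplane used for the oscillation is \emph{stable} under exactly the perturbations that convex integration along that hyperplane produces, so one cannot hope to restore avoidance by perturbing the jet --- one must perturb the hyperplane. The correct procedure (and the real content of the proof in \cite{CIwA&H46D}) is to choose the directions of oscillation themselves generically: use density and openness of \(\cal{A}(s,T)\) to pick, locally constantly in \((s,x)\), a single configuration \(\Xi\) of \(n\) hyperplanes lying in the avoidance set whose conormals span \(\T_x^*\M\) (a generic condition, so compatible with density), carry out the iterated convex integration with respect to \emph{these} hyperplanes, and use (2) and (4) within \(\cal{A}(s,\Xi)\) so that each step preserves the constraint needed by the later ones. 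Your sketch captures that last bookkeeping point, but with the hyperplanes fixed beforehand the construction already breaks down before any convex integration is performed.
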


\tref{CIwA-Thm} is a special case of Gromov's general theory of convex integration via convex hull extensions introduced in \cite{PDR} and developed in \cite{CIT} (see \cite[Cor.\ 5.5]{CIwA&H46D}).  Note also that \(\cal{A} = \cal{R} \x_{(D^q \x \M)} \Gr_{n-1}^{(\infty)}(\T\M_{D^q})\) is a fibred avoidance template for \(\cal{R}\) \iff\ \(\cal{R}\) is an ample fibred relation in the classical sense and thus, in this case, \tref{CIwA-Thm} recovers the classical convex integration theorem as proved in \cite[Chs.\ 17--18]{ItthP}.

\begin{Rk}\label{calc-with-amp}
The fibred avoidance pre-templates considered in this paper will all be of the form:
\ew
\cal{A} = E_{D^q} \x_{(D^q \x \M)} \cal{A}' \cc E_{D^q} \x_{(D^q \x \M)} \bigg[\lt(\T^*\M \ts E\rt)_{D^q} \x_{(D^q \x \M)} \Gr_{n-1}^{(\0)}(\T\M_{D^q})\bigg]
\eew
for some subbundle \(\cal{A}' \cc \lt(\T^*\M \ts E\rt)_{D^q} \x_{(D^q \x \M)} \Gr_{n-1}^{(\0)}(\T\M_{D^q})\).  In this case, given \(s \in D^q\), \(x\in\M\) and \(\Xi \in \Gr_{n-1}^{(\infty)}(\T_x\M)\), define
\ew
\cal{A}'(s,\Xi) = \lt\{ T \in \T^*_x\M \ts E_x ~\m|~ [(s,T),(s,\Xi)] \in \cal{A}'_{(s,x)}\rt\}.
\eew
Then, for all \(\bb{B} \in \Xi\), \(\la \in \Hom(\bb{B},E_x)\) and \(e \in E_x\):
\ew
\cal{A}(s,\Xi) \cap \Pi_e(\bb{B},\la) = \{e\} \x \Big[\cal{A}(s,\Xi)' \cap \Pi(\bb{B},\la)\Big]
\eew
for \(\Pi(\bb{B},\la)\) as defined in \eref{PiBl-defn}, and thus \(\cal{A}(s,\Xi) \cap \Pi_e(\bb{B},\la) \cc \Pi_e(\bb{B},\la)\) is ample \iff\ \(\cal{A}'(s,\Xi) \cap \Pi(\bb{B},\la) \pc \Pi(\bb{B},\la)\) is ample for all \(\bb{B}\) and \(\la\).
\end{Rk}

\begin{Rk}[Cohomology of \(\Op(A)\)]
Given a polyhedron \(A\) in a manifold \(\M\), note that every sufficiently small open neighbourhood \(U\) of \(A\) deformation retracts onto \(A\).  In particular, one can always implicitly assume that \(\Op(A)\) has been chosen small enough that \(A\) and \(\Op(A)\) have identical cohomology rings and thus condition (2) in the introduction is independent of the choice of \(\Op(A)\).\\
\end{Rk}

\section{Formulating the \(h\)-principle for \slr\ 3-forms as a differential relation}\label{DR-form}

Let \(\M\) be an oriented \(6\)-manifold and recall that the symbol of the exterior derivative on 2-forms is the unique vector bundle homomorphism \(\mc{D}:\ww{2}\T^*\M^{(1)}\to\ww{3}\T^*\M\) such that the following diagram commutes:
\ew
\begin{tikzcd}
\Ga\lt(\M,\ww{2}\T^*\M^{(1)}\rt) \ar[rr, "\mLa{\mc{D}}"] & & \Om^{3}(\M)\\
& \Om^{2}(\M) \ar[ru, "\mLa{\dd}"'] \ar[lu, "\mLa{j_1}"] &
\end{tikzcd}
\eew
where \(\ww{2}\T^*\M^{(1)}\) denotes the first jet bundle of \(\ww{2}\T^*\M\).  Explicitly, identifying \(\ww{2}\T^*\M^{(1)} \cong \ww{2}\T^*\M \ds \lt(\T^*\M \ts \ww{2}\T^*\M\rt)\) as usual, \(\mc{D}\) is simply the composite map:
\ew
\ww{2}\T^*\M \ds \lt(\T^*\M \ts \ww{2}\T^*\M\rt) \oto{\mns{~proj_2~}} \T^*\M \ts \ww{2}\T^*\M \oto{\mns{\w}} \ww{3}\T^*\M.
\eew
Now, fix \(q \ge 0\), let \(a: D^q \to \Om^3(\M)\) be any continuous map and define a fibred differential relation \(\cal{R}_+(a) \cc D^q \x \ww{2}\T^*\M^{(1)}\) by:
\ew
\cal{R}_+(a) =&\ \lt\{ (s,T) \in D^q \x \ww{2}\T^*\M^{(1)} ~\m|~ \mc{D}(T) + a(s) \in \ww[+]{3}\T^*\M \rt\}\\
=&\ \mc{D}^{-1}\lt(\ww[+]{3}\T^*\M_{D^q} - a\rt).
\eew
As proven in \cite[Lem.\ 5.2]{RhPfCSF}, if the fibred differential relation \(\cal{R}_+(a)\) satisfies the relative \(h\)-principle for all \(a\), then \slr\ 3-forms satisfy the relative \(h\)-principle.

I begin by remarking that, unlike the examples considered in \cite{RhPfCSF}, \(\cal{R}_+(a) \x_{(D^q \x \M)} \Gr_5^{(\0)}(\T\M_{D^q})\) itself is not a fibred avoidance template for \(\cal{R}_+(a)\).  Indeed, by \cite[Prop.\ 5.4]{RhPfCSF}, \(\cal{R}_+(a) \x_{D^q \x \M} \Gr_5^{(\0)}(\T\M_{D^q})\) is a fibred avoidance template for \(\cal{R}_+(a)\) \iff\ \(\mc{N}_{\rh_+}(\ta) \pc \ww{2}\lt(\bb{R}^5\rt)^*\) is ample for every \(\ta \in \ww{3}\lt(\bb{R}^5\rt)^*\).  However \(\mc{N}_{\rh_+}(\ta) \pc \ww{2}\lt(\bb{R}^5\rt)^*\) need not be ample.  To see this, consider the standard \slr\ 3-form \(\rh_+ = e^{123} + e^{456}\) on \(\bb{R}^6\) and recall the \(\pm1\)-eigenspaces of the para-complex structure \(I_{\rh_+}\):
\ew
E_+ = \<e_1,e_2,e_3\? \et E_-=\<e_4,e_5,e_6\?.
\eew
Given a hyperplane \(\bb{B}\pc\bb{R}^6\), on dimensional grounds one of the following statements holds:
\begin{enumerate}
\item \(\dim(\bb{B} \cap E_\pm) = 2\);
\item \(\dim(\bb{B} \cap E_+) = 2\) but \(\dim(\bb{B} \cap E_-) = 3\) (equivalently \(E_- \pc \bb{B}\));
\item \(\dim(\bb{B} \cap E_-) = 2\) but \(\dim(\bb{B} \cap E_+) = 3\) (equivalently \(E_+ \pc \bb{B}\)).
\end{enumerate}
Denote the sets of oriented hyperplanes corresponding to (1), (2) and (3) above by \(\oGr_{5,gen}\lt(\bb{R}^6\rt)\), \(\oGr_{5,-}\lt(\bb{R}^6\rt)\) and \(\oGr_{5,+}\lt(\bb{R}^6\rt)\) respectively.
\begin{Prop}
\ew
\lqt{\oGr_5\lt(\bb{R}^6\rt)}{\SL(3;\bb{R})^2} = \lt\{\oGr_{5,gen}\lt(\bb{R}^6\rt), \oGr_{5,-}\lt(\bb{R}^6\rt), \oGr_{5,+}\lt(\bb{R}^6\rt)\rt\}.
\eew
\end{Prop}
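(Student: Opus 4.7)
The plan is to establish three things: that the three sets partition $\oGr_5\lt(\bb{R}^6\rt)$, that each is $\SL(3;\bb{R})^2$-invariant, and that each consists of a single orbit.  The partition is immediate from dimension counting: for any hyperplane $\bb{B} \pc \bb{R}^6 = E_+ \ds E_-$, the intersection dimensions $\dim(\bb{B} \cap E_\pm)$ each lie in $\{2,3\}$, and the case `both equal $3$' is ruled out because otherwise $\bb{R}^6 = E_+ \ds E_- \pc \bb{B}$, contradicting $\dim \bb{B} = 5$.  Invariance is equally immediate: $\SL(3;\bb{R})^2 \pc \GL_+(6;\bb{R})$ preserves both eigenspaces $E_\pm$ of the para-complex structure $I_{\rh_+}$, and so preserves each intersection dimension.

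The substantive step is transitivity of the action on each of the three sets.  To handle this, I would parametrize an oriented hyperplane $\bb{B}$ by a nonzero covector $\phi \in \lt(\bb{R}^6\rt)^* \setminus \{0\}$ defined modulo positive scaling (which encodes the coorientation), and write $\phi = \phi_+ + \phi_-$ according to the splitting $\lt(\bb{R}^6\rt)^* = E_+^* \ds E_-^*$.  Cases (1), (2) and (3) correspond respectively to both $\phi_\pm$ nonzero, only $\phi_+$ nonzero, and only $\phi_-$ nonzero.  Now $\SL(3;\bb{R})$ acts transitively on $\bb{R}^3 \setminus \{0\}$ (via its simply transitive action on ordered determinant-$1$ bases), and hence dually on each of $E_\pm^* \setminus \{0\}$.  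Thus, given two hyperplanes $\bb{B}, \bb{B}'$ falling into the same case with representative covectors $\phi, \phi'$, one picks $g_\pm \in \SL(3;\bb{R})$ satisfying $g_\pm^* \phi_\pm' = \phi_\pm$ (for whichever factors are nonzero; $g_\pm$ may be chosen arbitrarily in the remaining factor), and the resulting $(g_+, g_-) \in \SL(3;\bb{R})^2$ carries $\bb{B}$ to $\bb{B}'$ exactly.

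No serious obstacle is anticipated — the argument reduces to dimension counting together with transitivity of $\SL(3;\bb{R})$ on nonzero vectors.  The only point requiring attention is the orientation issue: one must verify that transitivity on unscaled covectors descends correctly to transitivity on oriented hyperplanes.  This is automatic here, since the constructed $g$ satisfies $g^* \phi' = \phi$ on the nose, not merely up to positive scaling, and therefore preserves the coorientation by definition.
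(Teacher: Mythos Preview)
Your argument is correct and in fact considerably cleaner than the paper's. The paper proceeds geometrically: for the non-generic orbits it identifies $\oGr_{5,\pm}(\bb{R}^6)$ with $\oGr_2(E_\mp)$ via $\Pi \mapsto \Pi \cap E_\mp$ and invokes transitivity of $\SL(3;\bb{R})$ on oriented $2$-planes; for the generic orbit it first establishes transitivity on the \emph{unoriented} Grassmannian $\Gr_{5,gen}(\bb{R}^6)$ via a surjection from a product of punctured line bundles $\mc{L}_\pm \setminus \Gr_2(E_\pm)$, and then handles the orientation separately by exhibiting an explicit $F \in \SL(3;\bb{R})^2$ preserving a chosen $\bb{B}$ while reversing its orientation. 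Your covector parametrisation bypasses all of this: the three cases are treated uniformly, and the orientation issue never arises because you hit $\phi$ on the nose rather than up to scalar. The paper's more hands-on description does have some downstream utility (the explicit coordinates on $\Gr_{5,gen}$ and the maps $\cap^\pm$ reappear in the proof of \pref{o&d-prop}), but for the proposition itself your route is the more efficient one.
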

\begin{proof}
Firstly note that there is an isomorphism:
\ew
\bcd[row sep = 0pt]
\oGr_{5,+}\lt(\bb{R}^6\rt) \ar[r] & \oGr_2\lt(E_-\rt)\\
\Pi \ar[r, maps to] & \Pi \cap E_-\hs{1.5pt},
\ecd
\eew
where \(\Pi \cap E_-\) is oriented via the decomposition \(\Pi = E_+ \ds \lt(\Pi \cap E_-\rt)\).  Recalling that \(\SL(3;\bb{R})^2\) acts on \(\bb{R}^6\) diagonally via the decomposition \(\bb{R}^6 = E_+ \ds E_-\), and that \({ \bf1} \x \SL(3;\bb{R})\) acts transitively on \(\oGr_2\lt(E_-\rt)\), it follows that \(\oGr_{5,+}\lt(\bb{R}^6\rt)\) is a single orbit for the action of \(\SL(3;\bb{R})^2\).  Likewise, \(\oGr_{5,-}\lt(\bb{R}^6\rt)\) is a single orbit.

In the remaining case, firstly note that \(\Gr_{5,gen}\lt(\bb{R}^6\rt)\) forms a single orbit for \(\SL(3;\bb{R})^2\).  Indeed, there is a natural line bundle \(\mc{L}_+\) over \(\Gr_2(E_+)\) with fibre over \(\pi_+ \in \Gr_2(E_+)\) given by:
\ew
\mc{L}_+|_{\pi_+} = \rqt{E_+}{\pi_+}.
\eew
The action of \(\SL(3;\bb{R}) \x { \bf1}\) on \(\Gr_2(E_+)\) lifts naturally to define an action on \(\mc{L}_+\) which can be shown to act transitively on \(\mc{L}_+\osr\Gr_2(E_+)\), the complement of the zero section.  The analogous statement holds for \(\mc{L}_-\osr\Gr_2(E_-)\).  Now, note that there is a surjective map:
\ew
\bcd[row sep = 0pt]
\mc{L}_+\osr\Gr_2(E_+) \x \mc{L}_-\osr\Gr_2(E_-) \ar[r] & \Gr_{5,gen}\lt(\bb{R}^6\rt)\\
\lt(u_+ + \pi_+ \in \rqt{E_+}{\pi_+}, u_- + \pi_- \in \rqt{E_-}{\pi_-}\rt) \ar[r, maps to] & \pi_+ \ds \pi_- \ds \<u_+ + u_-\?.
\ecd
\eew
Since \(\SL(3;\bb{R})^2\) acts transitively on \(\mc{L}_+\osr\Gr_2(E_+) \x \mc{L}_-\osr\Gr_2(E_-)\), it follows that \(\Gr_{5,gen}\lt(\bb{R}^6\rt)\) forms a single \(\SL(3;\bb{R})^2\)-orbit, as claimed.  Therefore, to verify that \(\oGr_{5,gen}\lt(\bb{R}^6\rt)\) forms a single orbit, it suffices to consider \(\bb{B} \in \oGr_{5,gen}\lt(\bb{R}^6\rt)\) with oriented basis \(\<e_1,e_2,e_4,e_5, e_3 + e_6\?\) and note that:
\ew
F = \bpm -1 & & & & & \\ & 1 & & & & \\ & & -1 & & & \\ & & & -1 & & \\ & & & & 1 & \\ & & & & & -1 \epm \in \SL(3;\bb{R})^2
\eew
preserves \(\bb{B}\) and \(F|_\bb{B}\) is orientation-reversing.

\end{proof}

Clearly \(\oGr_{5,gen}\lt(\bb{R}^6\rt) \pc \oGr_5\lt(\bb{R}^6\rt)\) is open and dense. By \pref{open-to-open}, it follows that \(\mc{T}_{\rh_+}\lt(\oGr_{5,gen}\lt(\bb{R}^6\rt)\rt)\) must be the (unique) open orbit of 3-forms on \(\bb{R}^5\), i.e.\ the orbit of the 3-form \(\th^{123} + \th^{145}\).  Denote this orbit \(\ww[OP]{3}\lt(\bb{R}^5\rt)\) and term forms in this orbit ospseudoplectic, in the terminology of \cite[Prop.\ 3.12]{RhPfCSF}.  Now, consider the orbit \(\oGr_{5,+}\lt(\bb{R}^6\rt)\).  Taking \(\bb{B} = \<e_1,...,e_5\? \in \oGr_{5,+}\lt(\bb{R}^6\rt)\) yields:
\ew
\rh_+|_\bb{B} = \th^{123}.
\eew
It follows that \(\mc{T}_{\rh_+}\lt(\oGr_{5,+}\lt(\bb{R}^6\rt)\rt)\) is the orbit of non-zero, decomposable 3-forms on \(\bb{R}^5\).  By considering \(\bb{B} = \<e_2,...,e_6\? \in \oGr_{5,-}\lt(\bb{R}^6\rt)\), one sees that \(\mc{T}_{\rh_+}\lt(\oGr_{5,-}\lt(\bb{R}^6\rt)\rt)\) is precisely the same orbit.

\begin{Prop}\label{slr-setup}
Let \(\ta \in \ww[OP]{3}\lt(\bb{R}^5\rt)\).  Then, \(\mc{N}_{\rh_+}(\ta)\) is ample.  In contrast, now let \(\ta\) be a non-zero decomposable 3-form on \(\bb{R}^5\).  Then, \(\mc{N}_{\rh_+}(\ta)\) consists of two convex, connected components; in particular, it is not ample.
\end{Prop}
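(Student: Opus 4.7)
My plan for the first statement (ampleness when $\ta$ is ospseudoplectic) is to apply \lref{ab-lem} to the $\mc{S}(\rh_+)$-orbit $\mc{O} = \oGr_{5,gen}(\bb{R}^6)$.  I will need to verify three hypotheses.  For the orientation-reversing automorphism, I will use the involution $F \in \GL(6;\bb{R})$ swapping $e_i \leftrightarrow e_{i+3}$ for $i = 1, 2, 3$; this is a permutation of sign $-1$ and evidently satisfies $F^*\rh_+ = \rh_+$.  For the preimage condition, the preceding analysis already shows that $\oGr_{5,gen}$ is the unique orbit mapping under $\mc{T}_{\rh_+}$ to the ospseudoplectic orbit (since $\oGr_{5,\pm}$ both map to the distinct orbit of non-zero decomposable 3-forms on $\bb{R}^5$), so $\mc{T}_{\rh_+}^{-1}(\{\mc{T}_{\rh_+}(\mc{O})\}) = \{\mc{O}\}$ as required.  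The connectedness of the $\GL_+(5;\bb{R})$-stabiliser of an ospseudoplectic 3-form follows from \cite[Prop.\ 3.12]{RhPfCSF}.  \lref{ab-lem} then yields that $\mc{N}_{\rh_+}(\ta)$ is path-connected with convex hull $\ww{2}(\bb{R}^5)^*$, i.e.\ ample.

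For the second statement, I will first reduce to the case $\ta = e^{123}$.  Since the stabiliser of $e^{123}$ in $\GL(5;\bb{R})$ contains orientation-reversing elements (e.g.\ $\diag(1,1,1,-1,1)$), all non-zero decomposable 3-forms on $\bb{R}^5$ lie in a single $\GL_+(5;\bb{R})$-orbit; moreover, for any $f \in \GL_+(5;\bb{R})$ the extended automorphism $\Id_{\bb{R}} \ds f \in \GL_+(6;\bb{R})$ induces an affine linear bijection $\mc{N}_{\rh_+}(f^*\ta) \to \mc{N}_{\rh_+}(\ta)$ via $\nu \mapsto f^*\nu$, preserving both connectedness and convexity.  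The crux of the proof is then a direct computation of $\La$ for $\rh = e^6 \w \nu + e^{123}$.  Writing $\nu = \sum_{i < j} \nu_{ij} e^{ij}$, expanding $(v \hk \rh) \w \rh$ for each basis vector $v \in \bb{R}^6$, and translating under the canonical isomorphism $\ww{5}(\bb{R}^6)^* \cong \bb{R}^6 \ts \ww{6}(\bb{R}^6)^*$, one finds that $K_\rh$ acts with diagonal coefficient $\nu_{45}$ on $\<e_1, e_2, e_3\?$ and $-\nu_{45}$ on $\<e_4, e_5, e_6\?$, with off-diagonal contributions involving the remaining $\nu_{ij}$ and the Pfaffians of the restrictions of $\nu$ to the four-element subsets of $\{1, \dots, 5\}$.

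A direct check -- ultimately a Pfaffian identity -- then verifies that these off-diagonal contributions cancel in $K_\rh^2$, yielding $K_\rh^2 = \nu_{45}^2 \Id$ and hence $\La(\rh) = \nu_{45}^2 (e^{123456})^2$.  Consequently $\mc{N}_{\rh_+}(e^{123}) = \{\nu \in \ww{2}(\bb{R}^5)^* : \nu_{45} \ne 0\}$, which is the disjoint union of the two open half-spaces $\{\nu_{45} > 0\}$ and $\{\nu_{45} < 0\}$; each is convex and path-connected, proving the claim.  The main obstacle I anticipate is the explicit computation of $K_\rh$: while each individual piece is elementary, bookkeeping the combinatorial signs arising from the identification $\ww{5}(\bb{R}^6)^* \cong \bb{R}^6 \ts \ww{6}(\bb{R}^6)^*$ and verifying the Pfaffian identity that kills the off-diagonal contributions to $K_\rh^2(e_6)$ requires some care.
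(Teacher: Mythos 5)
Your proposal is correct and follows essentially the same route as the paper: the first half invokes \lref{ab-lem} with the same orientation-reversing involution $e_i \leftrightarrow e_{i+3}$ and the same orbit analysis (the paper cites \cite[Prop.\ 3.14]{RhPfCSF}, not 3.12, for connectedness of the stabiliser, but this is only a citation slip), and the second half is the same direct computation of $\La$, merely normalised differently ($\ta = e^{123}$ with annihilator $e^6$ instead of $\ta = \th^{456}$ with annihilator $\th^1$), arriving at the perfect-square invariant and hence the two convex half-space components. The only substantive difference is that the paper states the identity $\La(\th^1\w\om+\th^{456}) = \om(e_2,e_3)^2(\th^{123456})^{\ts 2}$ as a bare ``direct calculation'' whereas you propose the (slightly stronger, but equivalent once one notes $K_\rh^2 = \La(\rh)\Id$ on the $\slr$ locus and then argues by polynomial continuity) verification that $K_\rh^2 = \nu_{45}^2\Id$.
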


\begin{proof}
Let \(\ta \in \ww[OP]{3}\lt(\bb{R}^5\rt)^*\).  Then, \(\Stab_{\GL_+(5;\bb{R})}(\ta)\) is connected by \cite[Prop.\ 3.14]{RhPfCSF} and:
\ew
\mc{T}_{\rh_+}^{-1}\lt(\lt\{\mc{T}_{\rh_+}\lt[\oGr_{5,gen}\lt(\bb{R}^6\rt)\rt]\rt\}\rt) = \lt\{\oGr_{5,gen}\lt(\bb{R}^6\rt)\rt\},
\eew
by the above discussion.  Since \(\rh_+\) admits the orientation-reversing automorphism:
\ew
e_1 \leftrightarrow e_4, \hs{3mm} e_2 \leftrightarrow e_5, \hs{3mm} e_3 \leftrightarrow e_6
\eew
it follows from \lref{ab-lem} that \(\mc{N}_{\rh_+}(\ta)\) is ample.

Now let \(\ta\) be a non-zero, decomposable 3-form.  Identify \(\bb{R}^5\) with the subspace \(\<e_2,...,e_6\?\) of \(\bb{R}^6\) and take \(\ta = \th^{456}\). Then:
\ew
\mc{N}_{\rh_+}(\ta) = \lt\{\om \in \ww{2}\<\th^2,...,\th^6\?~\middle|~\th^1 \w \om + \th^{456} \in \ww[+]{3}\lt(\bb{R}^6\rt)^*\rt\}.
\eew
Recall that a 3-form \(\rh \in \ww{3}\lt(\bb{R}^6\rt)^*\) is of \(\SL(3;\bb{R})^2\)-type \iff\ the quadratic invariant \(\La\) defined in \sref{pre:slr} is positive.  A direct calculation shows that:
\ew
\La(\th^1 \w \om + \th^{456}) = \om(e_2,e_3)^2\cdot(\th^{123456})^{\ts2}.
\eew
Thus:
\ew
\mc{N}_{\rh_+}(\ta) = \lt\{\om \in \ww{2}\<\th^2,...,\th^6\?~\middle|~\om(e_2,e_3) \ne 0\rt\},
\eew
which has the form claimed.

\end{proof}

\section{Defining a fibred avoidance template for \(\cal{R}_+(a)\)}\label{Defn-AT}

The aim of this section is to define a fibred avoidance template \(\cal{A}\) for \(\cal{R}_+(a)\) and prove that it satisfies conditions (1)--(3) in \dref{CIwA-defn}.

\begin{Defn}
Let \(\rh\in\ww[+]{3}\lt(\bb{R}^6\rt)^*\) be an \slr\ 3-form and let \(\{\bb{B}_1,...,\bb{B}_k\}\in\Gr_5^{(k)}\lt(\bb{R}^6\rt)\) be a configuration of hyperplanes in \(\bb{R}^6\). Say that \(\{\bb{B}_1,...,\bb{B}_k\}\) is generic \wrt\ \(\rh\) if \(\bb{B}_i \in \Gr_{5,gen}\lt(\bb{R}^6\rt)\) for all \(i\in\{1,...,k\}\), and if for all distinct \(i,j\in\{1,...,k\}\) at least one of the conditions:
\ew
\bb{B}_i \cap E_{+,\rh} \ne \bb{B}_j \cap E_{+,\rh} \abo \bb{B}_i \cap E_{-,\rh} \ne \bb{B}_j \cap E_{-,\rh}
\eew
holds.  Write \(\Gr_{5,gen}^{(\infty)}\lt(\bb{R}^6\rt)_\rh\) for the collection of all generic configurations of hyperplanes in \(\bb{R}^6\) \wrt\ \(\rh\), or simply \(\Gr_{5,gen}^{(\infty)}\lt(\bb{R}^6\rt)\), when \(\rh\) is clear from context.  Note that, formally, \(\Gr_{5,gen}^{(1)}\lt(\bb{R}^6\rt) = \Gr_{5,gen}\lt(\bb{R}^6\rt)\); note also that for \(k \ge 2\), \(\Xi = \{\bb{B}_1,...,\bb{B}_k\}\) is generic \iff\ every subset of \(\Xi\) of size 2 is generic.
\end{Defn}

The appellation `generic' is justified by the following proposition:
\begin{Prop}\label{o&d-prop}
Let \(\rh\in\ww[+]{3}\lt(\bb{R}^6\rt)^*\) be an \slr\ 3-form. Then, \(\Gr_{5,gen}^{(\infty)}\lt(\bb{R}^6\rt) \pc \Gr_5^{(\infty)}\lt(\bb{R}^6\rt)\) is an open and dense subset.
\end{Prop}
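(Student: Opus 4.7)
The plan is to establish openness and density separately and to reduce both to statements about the ordered configuration space $\prod_1^k \Gr_5\lt(\bb{R}^6\rt)$ for each fixed $k$.  Since $\Gr_5^{(\0)}\lt(\bb{R}^6\rt) = \coprod_{k \ge 1} \Gr_5^{(k)}\lt(\bb{R}^6\rt)$ is a disjoint union of connected smooth manifolds, and $\Gr_{5,gen}^{(\0)}\lt(\bb{R}^6\rt)$ decomposes compatibly, it suffices to treat each $k \ge 1$ individually.  I would work upstairs via the quotient map $\si: \tld{\prod_1^k \Gr_5\lt(\bb{R}^6\rt)} \to \Gr_5^{(k)}\lt(\bb{R}^6\rt)$ of \sref{conf-intro}: since $\si$ is an open surjection obtained by quotienting a smooth free action of the finite group $Sym_k$, and since the genericity condition is manifestly $Sym_k$-invariant, openness and density upstairs imply the corresponding statements downstairs.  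Moreover, $\tld{\prod_1^k \Gr_5\lt(\bb{R}^6\rt)}$ is itself open and dense in the connected manifold $\prod_1^k \Gr_5\lt(\bb{R}^6\rt)$, so it is equivalent to prove the statement in the latter.

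For openness, the defining conditions of genericity are all manifestly open.  The condition $\bb{B} \in \Gr_{5,gen}\lt(\bb{R}^6\rt)$ is open because its complement consists of hyperplanes containing $E_{+,\rh}$ or $E_{-,\rh}$, which is a closed sub-Grassmannian.  On the open locus where $\bb{B}_i$ and $\bb{B}_j$ are both generic, the assignments $\bb{B} \mt \bb{B} \cap E_{\pm,\rh}$ are smooth maps to $\Gr_2\lt(E_{\pm,\rh}\rt)$, so the pairwise failure condition --- requiring both intersections to coincide simultaneously --- is closed, being the intersection of two diagonal pullbacks.

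For density, the strategy is to verify that the non-generic locus is a proper real-algebraic subvariety of $\prod_1^k \Gr_5\lt(\bb{R}^6\rt)$, and hence has empty interior in the analytic topology.  The condition $\bb{B}_i \notin \Gr_{5,gen}$ carves out a sublocus $\Gr_{5,\pm}\lt(\bb{R}^6\rt) \cong \Gr_2\lt(E_{\mp,\rh}\rt)$, which has dimension $2$ in the $5$-dimensional $\Gr_5\lt(\bb{R}^6\rt)$ and is thus proper.  The key geometric input concerns the pairwise failure locus: given $\pi_\pm \in \Gr_2\lt(E_{\pm,\rh}\rt)$, the generic hyperplanes $\bb{B}$ with $\bb{B} \cap E_{\pm,\rh} = \pi_\pm$ are precisely those containing the $4$-dimensional subspace $\pi_+ \ds \pi_-$, which form an open subset of $\bb{P}\lt(\bb{R}^6/(\pi_+ \ds \pi_-)\rt) \cong \bb{R}\bb{P}^1$ --- a $1$-dimensional family.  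As $(\pi_+,\pi_-)$ varies over the $4$-dimensional $\Gr_2\lt(E_{+,\rh}\rt) \x \Gr_2\lt(E_{-,\rh}\rt)$, the pairwise failure locus in $\Gr_5\lt(\bb{R}^6\rt) \x \Gr_5\lt(\bb{R}^6\rt)$ therefore has dimension at most $4+1+1 = 6$, giving codimension at least $4$; taking the finite union over all $\binom{k}{2}$ pairs preserves properness.

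The main obstacle, modest though it is, lies in passing from ``proper subvariety'' to ``empty interior''.  I would invoke the standard fact that a proper closed real-analytic subset of a connected real-analytic manifold has empty interior (its local defining equations would otherwise vanish on an open set and hence, by analytic continuation, on the entire connected component).  To confirm that the non-generic locus is genuinely proper for every $k$, it suffices to exhibit explicit generic configurations of arbitrary cardinality; for instance, in the standard basis with $\rh = \rh_+$, the hyperplanes $\bb{B}_i = \ker\lt(\th^1 + c_i \th^2 + \th^4 + c_i \th^5\rt)$ for $k$ distinct constants $c_i \in \bb{R}$ form such a configuration, since the restrictions $\lt.\lt(\th^1 + c_i\th^2\rt)\rt|_{E_{+,\rh}}$ are pairwise non-proportional and likewise for $E_{-,\rh}$.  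With the non-generic locus thus a proper subvariety, density follows and the argument is complete.
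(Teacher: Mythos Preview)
Your proof is correct and follows essentially the same reduction as the paper (to the ordered product via the open surjection \(\si\)), but the density argument diverges. The paper does not compute the full failure locus; instead it enlarges the complement of \(\mc{G}\) in \(\prod_1^k \Gr_{5,gen}\lt(\bb{R}^6\rt)\) to the set \(\mc{S}\) where \emph{only} the \(E_+\)-intersections of some pair coincide, and then observes that the smooth map \(\bb{B} \mt \bb{B} \cap E_{+,\rh}\) is a submersion onto \(\Gr_2(E_{+,\rh})\) (by transitivity of the \(\SL(3;\bb{R})^2\)-action), so \(\mc{S}\) is the preimage of the big diagonal in \(\prod_1^k \Gr_2(E_{+,\rh})\) and hence a stratified submanifold of codimension \(2\). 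This avoids any appeal to real-analytic or algebraic geometry: the Preimage Theorem and the fact that codimension-\(2\) stratified sets have dense complement are the only tools used.

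Your route keeps both the \(E_+\) and \(E_-\) conditions, parametrises the failure locus over \(\Gr_2(E_+) \x \Gr_2(E_-)\) to get codimension \(4\), and then invokes the standard fact that proper real-analytic subsets have empty interior, together with an explicit construction of large generic configurations to confirm properness. This buys a sharper codimension estimate at the cost of importing a (mild) piece of real-analytic machinery and the extra existence check. The paper's approach trades the sharper bound for a purely differential-topological argument that needs neither.
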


\begin{proof}
Recall from above that \(\Gr_{5,gen}\lt(\bb{R}^6\rt) \pc \Gr_5\lt(\bb{R}^6\rt)\) is open and dense.  Thus, it suffices to prove that \(\Gr_{5,gen}^{(k)}\lt(\bb{R}^6\rt) \pc \Gr_5^{(k)}\lt(\bb{R}^6\rt)\) is open and dense for every \(k \ge 2\).

Fix \(k \ge 2\) and recall that \(\Gr_5^{(k)}\lt(\bb{R}^6\rt)\) may be identified with the quotient \(\rqt{\tld{\prod_1^k \Gr_5\lt(\bb{R}^6\rt)}}{Sym_k}\), where:
\ew
\tld{\mns{\prod_1^k}\Gr_5\lt(\bb{R}^6\rt)} = \lt\{\lt(\bb{B}_1,...,\bb{B}_k\rt) \in \prod_1^k\Gr_5\lt(\bb{R}^6\rt)~\m|~ \bb{B}_i \ne \bb{B}_j \text{ for all \(i\ne j\)}\rt\}.
\eew
Define \(\mc{G} \pc \tld{\prod_1^k\Gr_5\lt(\bb{R}^6\rt)}\) to be the preimage of \(\Gr_{5,gen}^{(k)}\lt(\bb{R}^6\rt)\) under the quotient map \(\si:\tld{\prod_1^k \Gr_5\lt(\bb{R}^6\rt)} \to \rqt{\tld{\prod_1^k \Gr_5\lt(\bb{R}^6\rt)}}{Sym_k} \cong \Gr_5^{(k)}\lt(\bb{R}^6\rt)\); explicitly:
\ew
\mc{G} = \lt\{\lt(\bb{B}_1,...,\bb{B}_k\rt) \in \prod_1^k\Gr_{5,gen}\lt(\bb{R}^6\rt)~\m|~\text{for all \(i\ne j\), } \bb{B}_i\cap E_+ \ne \bb{B}_j \cap E_+ \text{ or } \bb{B}_i\cap E_- \ne \bb{B}_j \cap E_-\rt\}.
\eew

Since \(\si\) is open and surjective, to prove the proposition it suffices to prove that \(\mc{G} \pc \tld{\prod_1^k\Gr_5\lt(\bb{R}^6\rt)}\) is open and dense, or equivalently that \(\mc{G} \pc \prod_1^k \Gr_{5,gen}\lt(\bb{R}^6\rt)\) is open and dense (since both \(\prod_1^k \Gr_{5,gen}\lt(\bb{R}^6\rt)\) and \(\tld{\prod_1^k \Gr_5\lt(\bb{R}^6\rt)}\) are themselves open and dense subsets of \(\prod_1^k\Gr_5\lt(\bb{R}^6\rt)\)).

To this end, note that there is an inclusion:
\e\label{mcS}
\lt.\prod_1^k \Gr_{5,gen}\lt(\bb{R}^6\rt)\rt\osr\mc{G} \pc \lt\{\lt(\bb{B}_1,...,\bb{B}_k\rt) \in \prod_1^k\Gr_{5,gen}\lt(\bb{R}^6\rt)~\middle|~\text{for some \(i\ne j\): } \bb{B}_i\cap E_+ = \bb{B}_j \cap E_+\rt\} = \mc{S}.
\ee
\(\mc{S}\) is a stratified submanifold of \(\prod_1^k \Gr_{5,gen}\lt(\bb{R}^6\rt)\) of codimension 2.  Indeed, there is an \(\SL(3;\bb{R})^2\)-equivariant map:
\ew
\bcd[row sep = 0pt]
\cap^+:\Gr_{5,gen}\lt(\bb{R}^6\rt) \ar[r] & \Gr_2(E_+)\\
\bb{B} \ar[r, maps to] & \bb{B} \cap E_+
\ecd
\eew
which is submersive since \(\SL(3;\bb{R})^2\) acts transitively on \(\Gr_2(E_+)\).  Taking the Cartesian product yields a submersion:
\ew
\prod_1^k \cap^+: \prod_1^k \Gr_{5,gen}\lt(\bb{R}^6\rt) \to \prod_1^k \Gr_2(E_+).
\eew
By definition:
\ew
\mc{S} = \lt(\prod_1^k \cap^+\rt)^{-1} \lt(\prod_1^k \Gr_2(E_+)\rt)_{sing}
\eew
(see \sref{conf-intro}) where the set \(\lt(\prod_1^k \Gr_2(E_+)\rt)_{sing} \pc \prod_1^k \Gr_2(E_+)\) is a stratified submanifold of codimension \(\dim\Gr_2(E_+) = 2\). Using the Preimage Theorem (which applies equally well to stratified submanifolds; see e.g.\ \cite[p.\ 17]{ItthP}), it follows that \(\mc{S}\) is a stratified submanifold of codimension 2. The openness and density of \(\mc{G}\) in \(\prod_1^k \Gr_{5,gen}\lt(\bb{R}^6\rt)\) now follows from \eref{mcS}, completing the proof.

\end{proof}

\begin{Defn}
Let \(\M\) be an oriented 6-manifold, fix \(q \ge 0\) and let \(a: D^q \to \Om^3(\M)\) be a continuous map. Define:
\ew
\cal{A} = \lt\{[(s,T),(s,\Xi)] \in \cal{R}_+(a) \x_{(D^q \x \M)} \Gr_5^{(\infty)}(\T\M_{D^q})~\m|~\Xi \in \Gr_{5,gen}^{(\infty)}(\T\M)_{\mc{D}(T) + a(s)}\rt\}.
\eew 
\end{Defn}

\begin{Prop}
\(\cal{A}\) is a pre-template for \(\cal{R}_+(a)\). Moreover, for each \(s \in D^q\), \(x \in \M\) and \((s,T) \in \cal{R}_+(a)_{(s,x)}\):
\ew
\cal{A}(s,T) \pc \Gr_5^{(\infty)}(\T_x\M)
\eew
is a(n) (open and) dense subset.
\end{Prop}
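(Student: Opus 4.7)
The plan is to verify, in turn, condition (2) of \dref{CIwA-defn} (the monotonicity inclusion), condition (1) (openness of $\cal{A}$), and the open-and-dense claim for $\cal{A}(s,T)$.  All three reduce to \pref{o&d-prop} combined with the continuous dependence of the para-complex structure $I_\rh$ (and hence of the eigenbundles $E_{\pm,\rh}$) on $\rh$.

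Condition (2) is immediate from unwinding the definitions.  Given $\Xi' \cc \Xi \in \Gr_5^{(\0)}(\T_x\M)$ and $T \in \cal{A}(s,\Xi)$, write $\rh = \mc{D}(T) + a(s)$ (which is automatically an \slr\ 3-form, since $T \in \cal{R}_+(a)$).  By hypothesis every $\bb{B} \in \Xi$ lies in $\oGr_{5,gen}(\T_x\M)$ (defined via $E_{\pm,\rh}$) and every pair of distinct elements of $\Xi$ satisfies the distinctness condition on intersections with $E_{\pm,\rh}$; both properties are inherited by subsets, so $T \in \cal{A}(s,\Xi')$.

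For condition (1), I would introduce the globalised locus
\[
\mc{U} = \lt\{(\rh,\Xi) \in \ww[+]{3}\T^*\M \x_\M \Gr_5^{(\0)}(\T\M) ~\m|~ \Xi \text{ is generic \wrt\ } \rh \rt\},
\]
and observe that $\cal{A}$ is the preimage of $\mc{U}$ under the continuous fibred map $[(s,T),(s,\Xi)] \mt (\mc{D}(T) + a(s),\Xi)$.  Openness of $\mc{U}$ is then a local statement: on a chart where $E_{\pm,\rh}$ vary smoothly in $\rh$, both of the conditions ``$\bb{B}_i$ meets $E_{+,\rh}$ and $E_{-,\rh}$ each in a $2$-plane'' and ``$\bb{B}_i \cap E_{+,\rh} \ne \bb{B}_j \cap E_{+,\rh}$ or $\bb{B}_i \cap E_{-,\rh} \ne \bb{B}_j \cap E_{-,\rh}$'' are open, the latter being the complement of the intersection of two closed conditions cut out by continuous equations in the relevant Grassmannian product.

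Finally, for fixed $(s,T)$ with $\rh = \mc{D}(T) + a(s)$, unwinding the definitions gives $\cal{A}(s,T) = \Gr_{5,gen}^{(\0)}(\T_x\M)_\rh$, which is open and dense in $\Gr_5^{(\0)}(\T_x\M)$ by \pref{o&d-prop}.  I do not anticipate any genuine obstacle: the substantive work of producing open, dense configurations of generic hyperplanes has already been carried out in \pref{o&d-prop}, and the present proposition is essentially a repackaging of that result in the fibred setting, with the only additional ingredient being the (automatic) smooth dependence of the para-complex eigendecomposition on the underlying \slr\ 3-form.
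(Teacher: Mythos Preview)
Your argument is correct and follows essentially the same route as the paper: condition (2) by the observation that genericity of a configuration is inherited by subsets, condition (1) by openness of the genericity condition jointly in $(\rh,\Xi)$, and the density claim directly from \pref{o&d-prop}. Your treatment of openness via the auxiliary set $\mc{U}$ and the continuous preimage is slightly more explicit than the paper's one-line assertion, but the content is identical.
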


\begin{proof}
It is clear that \(\cal{A} \pc \cal{R}_+(a) \x_{(D^q \x \M)} \Gr_5^{(\infty)}(\T\M_{D^q})\) is open, since for \(\rh \in \ww[+]{3}\lt(\bb{R}^6\rt)^*\) and \(\Xi \in \Gr_5^{(\infty)}\lt(\bb{R}^6\rt)\), the condition \(\Xi \in \Gr_{5,gen}^{(\infty)}\lt(\bb{R}^6\rt)_\rh\) is open in both \(\rh\) and \(\Xi\).  Now, fix \(s \in D^q\) and \(x \in \M\), consider \(\Xi' \cc \Xi \in \Gr_5^{(\infty)}(\T_x\M)\) and suppose \(T \in \cal{A}(s,\Xi) \cc E^{(1)}_x\). Write \(\rh = \mc{D}(T) + a(s)\). Then, \(\Xi \in \Gr_5^{(\infty)}(\T_x\M)_{\rh,gen}\) and so, since \(\Xi' \cc \Xi\), it follows that \(\Xi'\in \Gr_{5,gen}^{(\infty)}(\T_x\M)_\rh\) and hence that \(T \in \cal{A}(s,\Xi')\). Thus, \(\cal{A}(s,\Xi) \cc \cal{A}(s,\Xi')\) and hence \(\cal{A}\) is a pre-template for \(\cal{R}_+(a)\), as claimed. The final claim follows immediately from \pref{o&d-prop}.

\end{proof}

Note that the pre-template \(\cal{A}\) has the form described in \rref{calc-with-amp}.  Thus, to prove that \(\cal{A}\) is a fibred avoidance template for \(\cal{R}_+(a)\), and hence complete the proof of \tref{slr-hP-thm}, it suffices to prove that for all \(s \in D^q\), \(x \in \M\), \(\Xi \in \Gr_5^{(\infty)}(\T_x\M)\), \(\bb{B} \in \Xi\) and \(\la \in \Hom(\bb{B},\ww{2}\T^*_x\M)\), the subset:
\ew
\cal{A}'(s,\Xi) \cap \Pi(\bb{B},\la) \cc \Pi(\bb{B},\la)
\eew
is ample.  Fix \(\bb{B} \in \Xi\), choose an orientation on \(\bb{B}\), fix an oriented splitting \(\T_x\M = \bb{L} \ds \bb{B}\) and choose an oriented generator \(\th\) of the 1-dimensional oriented vector space \(\Ann(\bb{B}) \pc \T^*_x\M\).  Then, there is an isomorphism:
\ew
\bcd[row sep = 0pt]
\bb{B}^* \ds \ww{2}\bb{B}^* \ds \lt(\bb{B}^* \ts \ww{2}\T^*_x\M\rt) \ar[r] & \T^*_x\M \ts \ww{2}\T^*_x\M \ar[l]\\
\al \ds \nu \ds \la \ar[r, maps to] & \th \ts \lt(\th \w \al + \nu\rt) + \la.
\ecd
\eew
Using this identification:
\ew
\Pi(\bb{B},\la) \cong \bb{B}^* \x \ww{2}\bb{B}^* \x \{\la\}
\eew
and thus:
\ew
\cal{A}'(s,\Xi) \cap \Pi(\bb{B},\la) \cong \bb{B}^* \x \lt\{\nu \in \ww{2}\bb{B}^* ~\m|~ \raisebox{1pt}{\parbox{55mm}{\(\th \w \nu + \w(\la) + a(s)|_x \in \ww[+]{3}\T^*_x\M\) and \(\Xi\) is generic for \(\th \w \nu + \w(\la) + a(s)|_x\)}} \rt\} \x \{\la\}.
\eew

In particular, the ampleness of \(\cal{A}'(s,\Xi) \cap \Pi(\bb{B},\la)\) depends only on \(\w(\la)\) (for a fixed choice of \(a\)).  Thus, writing \(\ta = \w(\la) + a(s)|_x\), the task is to prove that for each \(\ta \in \ww{3}\bb{B}^*\), the subset:
\ew
\mc{N}(\ta;\Xi,\bb{B}) = \lt\{\nu \in \ww{2}\bb{B}^* ~\m|~ \th \w \nu + \ta \in \ww[+]{3}\T^*_p\M \text{ and } \Xi \text{ is generic for } \th \w \nu + \ta \rt\} \pc \ww{2}\bb{B}^*
\eew
is ample.  If this set is empty, the result is trivial, so \wlg\ one may assume that there exists \(\nu_0 \in \ww{2}\bb{B}^*\) such that \(\rh = \th \w \nu_0 + \ta\) is an \slr\ 3-form on \(\T_p\M\) \wrt\ which \(\Xi\) is generic.  Since \(\mc{N}(\ta;\Xi,\bb{B}) = \mc{N}(\rh;\Xi,\bb{B}) + \nu_0\), one sees that to prove \tref{slr-hP-thm}, it suffices to prove:
\begin{Prop}\label{A-Ample}
Let \(\rh \in \ww[+]{3}\lt(\bb{R}^6\rt)\) be an \slr\ 3-form, let \(\Xi \in \Gr^{(\infty)}_5\lt(\bb{R}^6\rt)\) be a generic configuration of hyperplanes \wrt\ \(\rh\), let \(\bb{B} \in \Xi\), choose an orientation on \(\bb{B}\), fix an oriented splitting \(\bb{R}^6 = \bb{L} \ds \bb{B}\) and choose an oriented generator \(\th\) of the 1-dimensional oriented vector space \(\Ann(\bb{B}) \pc \lt(\bb{R}^6\rt)^*\).  Define:
\ew
\mc{N}(\rh;\Xi,\bb{B}) = \lt\{\nu \in \ww{2}\bb{B}^* ~\m|~ \th \w \nu + \rh \in \ww[+]{3}\lt(\bb{R}^6\rt)^* \text{and } \Xi \text{ is generic for } \th \w \nu + \rh \rt\}.
\eew
Then, \(\mc{N}(\rh;\Xi,\bb{B}) \pc \ww{2}\bb{B}^*\) is ample.
\end{Prop}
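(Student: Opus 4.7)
The plan is to deduce the required ampleness from that of the larger set $\mc{N}_{\rh_+}(\ta)$, where $\ta = \rh|_\bb{B} \in \ww{3}\bb{B}^*$ is the tangential component of $\rh$ along $\bb{B}$. By hypothesis $\bb{B} \in \Xi$ is generic for $\rh$, so under any orientation-preserving identification $\bb{B} \cong \bb{R}^5$, the form $\ta$ lies in the ospseudoplectic orbit $\ww[OP]{3}\bb{B}^*$. Thus \pref{slr-setup} yields that $\mc{N}_{\rh_+}(\ta)$ is an open, path-connected subset of $\ww{2}\bb{B}^*$ whose convex hull is all of $\ww{2}\bb{B}^*$. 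Up to the translation $\nu \mt \nu + \nu_0$ introduced just before the statement of the proposition, the set $\mc{N}(\rh;\Xi,\bb{B})$ coincides with $\mc{N}_{\rh_+}(\ta) \setminus \mc{B}$, where
\ew
\mc{B} = \lt\{\nu \in \mc{N}_{\rh_+}(\ta) ~\m|~ \Xi \text{ fails to be generic for } \th \w \nu + \ta \rt\}.
\eew
Since ampleness is translation-invariant, it would suffice to prove that $\mc{N}_{\rh_+}(\ta) \setminus \mc{B}$ is ample.

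I would next decompose $\mc{B}$ according to the sources of genericity failure,
\ew
\mc{B} = \bigcup_{j=1}^k \mc{B}_j^{(1)} \cup \bigcup_{j \ne j'} \mc{B}_{j,j'}^{(2)},
\eew
where $\mc{B}_j^{(1)}$ is the locus of $\nu$ such that $\bb{B}_j \notin \Gr_{5,gen}\lt(\bb{R}^6\rt)$ for the perturbed 3-form $\th \w \nu + \rh$, and $\mc{B}_{j,j'}^{(2)}$ is the locus on which $\bb{B}_j$ and $\bb{B}_{j'}$ fail to be separated by the perturbed eigenspaces $E_{\pm,\th\w\nu+\rh}$. Using the explicit polynomial formulae for $K_\rh$, $\La(\rh)$ and $I_\rh$ recalled in \sref{pre:slr}, each of these defining conditions is polynomial in the coefficients of $\nu$, so each $\mc{B}_j^{(1)}$ and $\mc{B}_{j,j'}^{(2)}$ is a real algebraic subvariety of $\ww{2}\bb{B}^*$.

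The core task is then to prove the macilence of $\mc{B}$, namely that each defining subvariety has real codimension at least $2$ in $\ww{2}\bb{B}^*$. This is precisely the content of \lref{mac-lem}, whose proof will occupy \srefs{Epm-Deriv-Sec}--\ref{3rd-Mac-Sec}: one computes the derivative at $\nu = 0$ of the map $\nu \mt \lt(E_{+,\th\w\nu+\rh},\,E_{-,\th\w\nu+\rh}\rt)$ and uses the assumed genericity of $\Xi$ for $\rh$ to verify that each defining condition imposes at least two independent transversality constraints. Once this codimension bound is in hand, an elementary general-position argument completes the proof: since $\mc{N}_{\rh_+}(\ta)$ is open with convex hull equal to $\ww{2}\bb{B}^*$, removing the codimension-$\ge 2$ subvariety $\mc{B}$ preserves path-connectedness, and as $\mc{N}_{\rh_+}(\ta) \setminus \mc{B}$ remains open and meets every open half-space of $\ww{2}\bb{B}^*$ (in fact on a set of full measure in $\mc{N}_{\rh_+}(\ta)$), its convex hull is also all of $\ww{2}\bb{B}^*$. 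The principal obstacle is the codimension computation itself, which demands a careful stratified analysis of how the rank-$3$ distributions $E_\pm$ vary under perturbation of the underlying \slr\ 3-form and how pairs of generic hyperplanes interact with them -- precisely the technical work deferred to the remaining sections of the paper.
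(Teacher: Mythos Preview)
Your strategy coincides with the paper's: embed \(\mc{N}(\rh;\Xi,\bb{B})\) in a translate of \(\mc{N}_{\rh_+}(\ta)\) (ample by \pref{slr-setup}), exhibit the complement as a locus of codimension \(\ge 2\), and defer the codimension bound to \lref{mac-lem}. The paper's execution differs in two respects worth flagging. First, rather than your two-term decomposition \(\mc{B} = \bigcup_j \mc{B}_j^{(1)} \cup \bigcup_{j\ne j'} \mc{B}_{j,j'}^{(2)}\), the paper uses a \emph{three}-stage filtration \(\mc{N}_0 \supset \mc{N}_1 \supset \mc{N}_2 \supset \mc{N}_3 = \mc{N}(\rh;\Xi,\bb{B})\): it first removes the single-hyperplane failures \(\Si_{\bb{B}'}\), then those pair failures for which additionally \(\bb{B}' \cap E_{\pm,\rh'} = \bb{B} \cap E_{\pm,\rh'}\), and only then the remaining pair failures. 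This is not cosmetic: the transversality calculations in \lref{mac-lem}(2) and (3) establish macilence only relative to the intermediate sets \(\mc{N}_1\) and \(\mc{N}_2\), not in \(\mc{N}_0\); for instance the proof of (3) explicitly needs \(\bb{B}' \cap E_{\pm,\rh'} \ne \bb{B} \cap E_{\pm,\rh'}\) to set up its coordinate frame. So \lref{mac-lem} does not quite say that each of your \(\mc{B}_{j,j'}^{(2)}\) has codimension \(\ge 2\) in \(\mc{N}_0\). Second, the paper formalises the step ``removing a codimension-\(\ge 2\) set from an ample open set preserves ampleness'' via the notion of \emph{macilent} subsets and \lrefs{conn-lem}--\ref{ample-vs-mac}, applied three times, rather than a general-position or half-space argument. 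Your algebraic-variety framing is also not immediate: the eigenbundles \(E_{\pm,\rh'}\) are defined via \(I_{\rh'} = vol_{\rh'}^{-1}K_{\rh'}\), which involves \(\sqrt{\La(\rh')}\) and is not polynomial in \(\nu\); the paper avoids this by working with transversality of smooth maps rather than Zariski-closed conditions.
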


I begin with a lemma:
\begin{Lem}\label{conn-lem}
Let \(X\) be a connected topological space and let \(Y \cc X\) have empty interior.  Suppose that for every \(y \in Y\), there exists an open neighbourhood \(U_y\) of \(y\) in \(X\) such that \(U_y \osr Y\) is connected.  Then, \(X \osr Y\) is connected.
\end{Lem}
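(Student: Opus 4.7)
The plan is to argue by contradiction: assume $X\setminus Y = A\sqcup B$ with $A,B$ disjoint nonempty sets that are open in $X\setminus Y$, and extend this partition to an open partition of $X$ itself, contradicting the connectedness of $X$.

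First I would record the key preliminary observation: for each $y\in Y$, the set $U_y\setminus Y$ is nonempty (because $Y$ has empty interior, so $U_y\not\subseteq Y$) and is an open subset of $X\setminus Y$. Since it is connected and decomposes as $(U_y\cap A)\sqcup(U_y\cap B)$ with both pieces open in $X\setminus Y$, exactly one of $U_y\setminus Y\subseteq A$ or $U_y\setminus Y\subseteq B$ holds. This allows me to unambiguously assign each point $y\in Y$ to one of the two "sides,'' and hence to define
\[
\widehat A = A \cup \{y\in Y \mid U_y\setminus Y\subseteq A\},\qquad \widehat B = B \cup \{y\in Y \mid U_y\setminus Y\subseteq B\},
\]
which together partition $X$ into disjoint nonempty subsets.

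The main step is then to verify that $\widehat A$ (and symmetrically $\widehat B$) is open in $X$. At a point $a\in A$, pick an open set $U\subseteq X$ with $U\cap(X\setminus Y)=A$; I would show $U\subseteq\widehat A$ by arguing that for any $u\in U\cap Y$, the open set $U\cap U_u$ meets $X\setminus Y$ (again by the empty-interior hypothesis) in a nonempty subset of $A$, and since this subset is contained in the connected set $U_u\setminus Y$, the whole of $U_u\setminus Y$ must lie in $A$, whence $u\in\widehat A$. At a point $y\in\widehat A\cap Y$, I would show $U_y\subseteq\widehat A$ by a similar dichotomy argument: any point of $U_y$ either lies in $U_y\setminus Y\subseteq A\subseteq\widehat A$, or is a point $y'\in U_y\cap Y$, in which case the open set $U_y\cap U_{y'}$ has nonempty intersection with $X\setminus Y$ contained in $U_y\setminus Y\subseteq A$, forcing $U_{y'}\setminus Y\subseteq A$ by connectedness, and hence $y'\in\widehat A$.

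With $\widehat A,\widehat B$ open, nonempty, disjoint and covering $X$, the connectedness of $X$ is contradicted, so no such decomposition $A\sqcup B$ exists and $X\setminus Y$ is connected. The only subtle point throughout is the repeated use of the empty-interior hypothesis to guarantee that intersections of open neighbourhoods with $X\setminus Y$ are nonempty; this is the hinge that lets the local-connectedness hypothesis on $U_y\setminus Y$ propagate into a global statement, and it is the step I would expect to need the most care when writing out in detail.
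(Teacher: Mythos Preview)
Your proposal is correct and follows essentially the same approach as the paper's proof: extend the clopen partition \(A\sqcup B\) of \(X\setminus Y\) to a partition \(\widehat A\sqcup\widehat B\) of \(X\), verify that both pieces are open, and contradict the connectedness of \(X\). The only cosmetic difference is that the paper defines its extension via ``some open neighbourhood \(W_y\) with \(W_y\setminus Y\subseteq A\)'' rather than the specific \(U_y\), which makes openness slightly quicker to check but requires a separate argument for disjointness---your version reverses this trade-off.
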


\begin{proof}
The proof is a simple exercise in point-set topology.  Suppose that \(A,B \cc X \osr Y\) are open, disjoint subsets such that \(X \osr Y = A \cup B\).  For each \(y \in Y\), since \(U_y \osr Y\) is connected, it follows that either:
\e\label{dichot}
U_y \osr Y \cc A \abo U_y \osr Y \cc B.
\ee
Thus, define:
\e\label{defining-extension}
A' = A \cup \bigg\{y \in Y ~\bigg|~ \raisebox{3.3pt}{\parbox{6cm}{\center{there exists some open neighbourhood \(W_y\) of \(y\) in \(X\) such that \(W_y \osr Y \cc A\)}}} \bigg\}
\ee
and define \(B'\) analogously.  Then, by \eref{dichot}, clearly \(A' \cup B' = A \cup B \cup Y = X\).  Next, note that \(A' \cc X\) is open.  Indeed, since \(A \pc X \osr Y\) is open, there exists an open subset \(\mc{O} \cc X\) such that \(A = \mc{O} \cap (X \osr Y)\).  Then, every \(y \in \mc{O} \cap Y\) also lies in \(A'\) (simply take \(W_y = \mc{O}\)) so \(A \cc \mc{O} \cc A'\).  Now, let \(y \in Y \cap A'\) and let \(W_y\) be as in \eref{defining-extension}.  Then, every \(y' \in W_y \cap Y\) also lies in \(A'\) (simply take \(W_{y'} = W_y\)) and so \(y \in W_y \cc A'\).  Thus:
\ew
A' \cc \mc{O} \cup \bigcup_{y \in Y \cap A'} W_y \cc A',
\eew
hence equality holds, and whence \(A'\) is open.  Similarly, \(B' \cc X\) is also open.

Now, suppose there exists \(y \in A' \cap B'\).  Then, clearly \(y \in Y\) (since \(A' \cap B' \cap (X \osr Y) = A \cap B = \es\)).  By definition, there exist neighbourhoods \(W_y\) and \(W_y'\) of \(y\) in \(X\) such that \(W_y \osr Y \cc A\) and \(W_y' \osr Y \cc B\).  Then:
\ew
(W_y \cap W_y') \cap (X \osr Y) \cc A \cap B = \es,
\eew
which contradicts the density of \(X \osr Y\) (since \(W_y \cap W_y'\) is an open neighbourhood of \(y\) in \(X\)).  Thus, \(A' \cap B' = \es\).  Since \(X\) is connected, it follows that one of \(A'\) and \(B'\) must be empty, and hence so must one of \(A\) and \(B\).

\end{proof}

Now let \(\bb{A}\) be an affine space and \(X \cc \bb{A}\) an open subset.  I term a subset \(Y \cc X\) macilent if it is closed and if, for every point \(y \in Y\), there exists an open neighbourhood \(U_y\) of \(y\) in \(X\) and a submanifold \(S_y \pc U_y\) of codimension at least 2 such that:
\e\label{loc-small}
Y \cap U_y \cc S_y.
\ee
\begin{Lem}\label{ample-vs-mac}
Let \(X \cc \bb{A}\) be open and ample.  If \(Y \cc X\) is macilent, then \(X \osr Y\) is also open and ample.
\end{Lem}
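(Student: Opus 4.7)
The plan is to establish ampleness of $X \osr Y$ path-component by path-component.  Openness of $X \osr Y$ is immediate since $Y$ is closed in the open set $X$.  For ampleness, I would first reduce the problem as follows: show that for each path component $X_0$ of $X$, the subset $X_0 \osr Y$ is a (necessarily nonempty, since macilent sets have empty interior) path component of $X \osr Y$ whose convex hull is $\bb{A}$.  Because every path component of $X \osr Y$ is contained in a path component of $X$, this reduction exhausts all path components of $X \osr Y$ and yields ampleness.

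The first nontrivial step is connectedness of $X_0 \osr Y$, to which I would apply \lref{conn-lem} with $X_0$ in place of $X$ and $Y \cap X_0$ in place of $Y$.  Macilence forces $Y \cap X_0$ to have empty interior.  Given $y \in Y \cap X_0$, I would take the neighbourhood $U_y$ and codimension-$\ge 2$ submanifold $S_y$ supplied by the definition of macilence and, shrinking $U_y$ to a connected coordinate ball contained in $X_0$, invoke the standard fact that $U_y \osr S_y$ is connected (a submanifold of codimension at least $2$ does not disconnect its ambient connected manifold).  Since $Y \cap U_y \cc S_y$, one has $U_y \osr S_y \cc U_y \osr Y$, with the former open and dense in the latter; as an open subset of $\bb{A}$ containing a connected dense subset is itself connected, $U_y \osr Y$ is connected.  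Then \lref{conn-lem} furnishes connectedness of $X_0 \osr Y$, and local path-connectedness of open subsets of $\bb{A}$ upgrades this to path-connectedness.

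For the second step I would verify $\Conv(X_0 \osr Y) = \bb{A}$ by a perturbation argument.  Given $a \in \bb{A}$, ampleness of $X$ together with openness of $X_0$ allows one to choose affinely independent points $x_0,...,x_n \in X_0$ (with $n = \dim \bb{A}$) such that $a$ lies in the interior of the simplex $\Conv\{x_0,...,x_n\}$.  Because $Y \cap X_0$ has empty interior in $X_0$, the set $X_0 \osr Y$ is dense in $X_0$, so each $x_i$ can be replaced by a nearby $x_i' \in X_0 \osr Y$.  For sufficiently small perturbations the $x_i'$ remain affinely independent and $a$ remains in the (open) interior of $\Conv\{x_0',...,x_n'\}$, so $a \in \Conv(X_0 \osr Y)$.

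I expect the most delicate point to be the local connectedness step: macilence only provides a codimension bound on a submanifold \emph{containing} $Y$ locally, so one must separately argue, as above, that this forces connectedness of the complement $U_y \osr Y$ itself rather than merely of $U_y \osr S_y$.  The convex-hull step, while it looks like a routine perturbation, must be phrased via affinely independent simplices (for which both nondegeneracy and interior containment are stable under small deformations) rather than via an arbitrary Carathéodory decomposition, which need not be robust under perturbation of the vertices.
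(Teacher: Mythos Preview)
Your proposal is correct and structurally matches the paper's proof: both reduce to a single path component $X_0$, both invoke \lref{conn-lem} after observing that $U_y \osr S_y$ is connected and dense in $U_y \osr Y$, and both upgrade connectedness to path-connectedness via local path-connectedness of open subsets of $\bb{A}$.

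The only point of divergence is the convex-hull step.  The paper argues locally: for each $y \in Y$ one has $y \in \Conv(U_y \osr Y) \cc \Conv(X_0 \osr Y)$, whence $X_0 \cc \Conv(X_0 \osr Y)$ and so $\Conv(X_0 \osr Y) = \Conv(X_0) = \bb{A}$.  Your argument is global: given an arbitrary $a \in \bb{A}$, you produce a full-dimensional simplex with vertices in $X_0$ containing $a$ in its interior, then perturb the vertices into $X_0 \osr Y$ by density.  Both are valid; the paper's route is slightly more economical because the containment $y \in \Conv(U_y \osr Y)$ is immediate (pick two points of $U_y \osr S_y$ on a line through $y$), whereas your claim that any $a \in \bb{A}$ lies in the interior of a simplex with vertices in $X_0$ is true but requires a short argument (e.g.\ Carath\'eodory plus a dilation of the resulting simplex about its barycentre, using openness of $X_0$).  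Your final paragraph already flags exactly this subtlety, so there is no gap.
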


\begin{Rk}
A related result concerning so-called `thin' sets was stated without proof in \cite[\S18.1]{ItthP} however, to the author's knowledge, the notion of macilent sets used in this paper cannot be found in the literature.
\end{Rk}

\begin{proof}
By considering each path component of \(X\) separately, it suffices to consider the case where \(X\) is open, path-connected and ample (i.e.\ satisfies \(\Conv(X) = \bb{A}\)).  Since each \(S_y\) has codimension at least 2 in \(U_y\), it follows that \(Y\) has empty interior in \(X\) and that \(U_y \osr S_y\) is connected for all \(y \in Y\).  But \(U_y \osr S_y\) is dense in \(U_y\), hence certainly dense in \(U_y \osr Y\) and whence \(U_y \osr Y\) is also connected for all \(y \in Y\).  It follows from \lref{conn-lem} that \(X \osr Y\) is connected.  Since \(X \osr Y\) is open in \(X\) and \(X\) is open in \(\bb{A}\), it follows that \(X \osr Y\) is also locally path-connected and hence path-connected, as claimed.  To see that \(\Conv(X \osr Y) = \bb{A}\), note that for each \(y \in Y\), by \eref{loc-small}:
\ew
y \in \Conv(U_y \osr Y) \cc \Conv(X \osr Y)
\eew
and hence:
\ew
\Conv(X \osr Y) = \Conv(X) = \bb{A},
\eew
as required.

\end{proof}

Now return to \pref{A-Ample}.  The proof is broken into three stages. Initially, define the larger set:
\ew
\mc{N}(\rh;\bb{B})_0 = \lt\{\nu \in \ww{2}\bb{B}^* ~\middle|~\th \w \nu + \rh \in \ww[+]{3}\lt(\bb{R}^6\rt)^*\rt\} \pc \ww{2}\bb{B}^*.
\eew
Since \(\Xi\) is generic for \(\rh\) and \(\bb{B} \in \Xi\), it follows that \(\ta = \rh|_\bb{B}\) is an ospseudoplectic form on \(\bb{B}\).  Noting that \(\mc{N}(\rh;\bb{B})_0\) is just a translated copy of \(\mc{N}_{\rh_+}(\ta)\), by \pref{slr-setup} it follows that \(\mc{N}(\rh;\bb{B})_0 \pc \ww{2}\bb{B}^*\) is ample (and, indeed, path-connected).  For each \(\bb{B}'\in\Xi\) define a closed subset \(\Si_{\bb{B}'} \pc \mc{N}(\rh;\bb{B})_0\) by:
\ew
\Si_{\bb{B}'} = \lt\{\nu \in \mc{N}(\rh;\bb{B})_0 ~\m|~ \bb{B}' \text{ is not generic for } \th \w \nu + \rh\rt\}
\eew
and define:
\ew
\mc{N}(\rh;\Xi,\bb{B})_1 = \lt.\mc{N}(\rh;\bb{B})_0\m\osr\bigcup_{\bb{B}' \in \Xi} \Si_{\bb{B}'}\rt..
\eew
Explicitly:
\ew
\mc{N}(\rh;\Xi,\bb{B})_1 = \lt\{\nu \in \ww{2}\bb{B}^* ~\m|~\th \w \nu + \rh \in \ww[+]{3}\lt(\bb{R}^6\rt)^* \text{ and every \(\bb{B}' \in \Xi\) is generic for } \th \w \nu + \rh\rt\}.
\eew
Next, for each pair \(\{\bb{B}',\bb{B}''\} \cc \Xi\) define closed subsets \(\Si^\pm_{\{\bb{B}',\bb{B}''\}} \pc \mc{N}(\rh;\Xi,\bb{B})_1\) by:
\ew
\Si^+_{\{\bb{B}',\bb{B}''\}} = \lt\{\nu \in \mc{N}(\rh;\Xi,\bb{B})_1 ~\m|~ \bb{B}' \cap E_{\pm, \th \w \nu + \rh} = \bb{B}'' \cap E_{\pm, \th \w \nu + \rh} \text{ and } \bb{B}' \cap E_{+, \th \w \nu + \rh} = \bb{B} \cap E_{+,\th \w \nu + \rh} \rt\}
\eew
and
\ew
\Si^-_{\{\bb{B}',\bb{B}''\}} = \lt\{\nu \in \mc{N}(\rh;\Xi,\bb{B})_1 ~\m|~ \bb{B}' \cap E_{\pm, \th \w \nu + \rh} = \bb{B}'' \cap E_{\pm, \th \w \nu + \rh} \text{ and } \bb{B}' \cap E_{-, \th \w \nu + \rh} = \bb{B} \cap E_{-,\th \w \nu + \rh} \rt\},
\eew
and set:
\ew
\mc{N}(\rh;\Xi,\bb{B})_2 = \lt.\mc{N}(\rh;\Xi,\bb{B})_1\m\osr\bigcup_{\{\bb{B}',\bb{B}''\} \cc \Xi} \lt(\Si^+_{\{\bb{B}',\bb{B}''\}} \cup \Si^-_{\{\bb{B}',\bb{B}''\}}\rt)\rt..
\eew
Explicitly:
\ew
\mc{N}(\rh;\Xi,\bb{B})_2 = \bigg\{ \nu \in \mc{N}(\rh;\Xi,\bb{B})_1 ~\bigg|~ \raisebox{4pt}{\parbox{71mm}{\center{if \(\{\bb{B}',\bb{B}''\} \cc \Xi\) is non-generic for \(\rh' = \th \w \nu + \rh\), then \(\bb{B}' \cap E_{\pm,\rh'} \ne \bb{B} \cap E_{\pm,\rh'}\)}}} \bigg\}.
\eew
Finally, for each pair \(\{\bb{B}',\bb{B}''\} \cc \Xi\) define a closed subset \(\Si_{\{\bb{B}',\bb{B}''\}} \pc \mc{N}(\rh;\Xi,\bb{B})_2\) by:
\ew
\Si_{\{\bb{B}',\bb{B}''\}} = \lt\{\nu \in \mc{N}(\rh;\Xi,\bb{B})_2 ~\m|~ \bb{B}' \cap E_{\pm, \th \w \nu + \rh} = \bb{B}'' \cap E_{\pm, \th \w \nu + \rh}\rt\}.
\eew
Set:
\ew
\mc{N}(\rh;\Xi,\bb{B})_3 = \lt.\mc{N}(\rh;\Xi,\bb{B})_2\m\osr\bigcup_{\{\bb{B}',\bb{B}''\} \cc \Xi} \Si_{\{\bb{B}',\bb{B}''\}}\rt.
\eew
and observe that, by construction, \(\mc{N}(\rh;\Xi,\bb{B})_3 = \mc{N}(\rh;\Xi,\bb{B})\).  Thus, by applying \lref{ample-vs-mac} three times, to prove \pref{A-Ample} it suffices to prove the following lemma:
\begin{Lem}\label{mac-lem}~
\begin{enumerate}
\item For all \(\bb{B}'\in\Xi\), the subset \(\Si_{\bb{B}'} \pc \mc{N}(\rh;\bb{B})_0\) is macilent.

\item For all \(\{\bb{B}',\bb{B}''\} \cc \Xi\), the subsets \(\Si^\pm_{\{\bb{B}',\bb{B}''\}} \pc \mc{N}(\rh;\Xi,\bb{B})_1\) are macilent.

\item For all \(\{\bb{B}',\bb{B}''\} \cc \Xi\), the subset \(\Si_{\{\bb{B}',\bb{B}''\}} \pc \mc{N}(\rh;\Xi,\bb{B})_2\) is macilent.
\end{enumerate}
\end{Lem}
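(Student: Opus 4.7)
My plan is to verify each part by producing, for each $\nu_0 \in \Si$, an open neighbourhood $U_{\nu_0}$ in the ambient space and a submanifold $S_{\nu_0} \pc U_{\nu_0}$ of codimension at least $2$ which contains $\Si \cap U_{\nu_0}$. All three subsets $\Si$ are cut out by incidence conditions between the hyperplanes in $\Xi \cup \{\bb{B}\}$ and the para-complex eigenspaces $E_{\pm,\rh'}$ of $\rh' = \th \w \nu + \rh$, so the central object to study is the smooth map
\ew
\Phi:\mc{N}(\rh;\bb{B})_0 \to \Gr_3\lt(\bb{R}^6\rt) \x \Gr_3\lt(\bb{R}^6\rt), \hs{3mm} \nu \mt \lt(E_{+,\rh'},E_{-,\rh'}\rt).
\eew
Each $\Si$ can be written as $\Phi^{-1}(Z)$ for a closed incidence locus $Z$ in the target (whose exact form depends on the case), so closedness of each $\Si$ is immediate from continuity of $\Phi$, and the remaining content lies in producing $S_{\nu_0}$.

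I would next check that each $Z$ has codimension at least $2$ in $\Gr_3(\bb{R}^6) \x \Gr_3(\bb{R}^6)$. In part (1), $Z$ is the union $\lt(\Gr_3(\bb{B}') \x \Gr_3(\bb{R}^6)\rt) \cup \lt(\Gr_3(\bb{R}^6) \x \Gr_3(\bb{B}')\rt)$ of codimension-$3$ loci corresponding to $E_+ \pc \bb{B}'$ or $E_- \pc \bb{B}'$. In part (3), each pair-coincidence $\bb{B}'\cap E_\pm = \bb{B}''\cap E_\pm$ forces the $2$-plane $\bb{B}' \cap E_\pm$ to lie in the $4$-plane $\bb{B}'\cap\bb{B}''$, a codimension-$2$ Schubert-type condition on $E_\pm$, yielding total codimension $4$. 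In part (2), the additional constraint $\bb{B}'\cap E_\pm = \bb{B}\cap E_\pm$ pushes the codimension higher still. Granted that $\Phi$ meets $Z$ sufficiently transversely at $\nu_0$---i.e.\ that the composite of $\dd\Phi|_{\nu_0}$ with the normal projection to $Z$ at $\Phi(\nu_0)$ has rank at least $2$---one can take $S_{\nu_0}$ to be the common zero locus of two functions of the form $f_i \circ \Phi$, where each $f_i$ vanishes locally on $Z$ and has $\dd f_i(\Phi(\nu_0))$ pulling back non-trivially under $\dd\Phi|_{\nu_0}$.

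The main obstacle, and the technical heart of the proof, is establishing this rank condition on $\dd\Phi|_{\nu_0}$ in each of the three cases. It reduces to a direct computation of
\ew
\dd\Phi|_{\nu_0}: \ww{2}\bb{B}^* \to \Hom\lt(E_{+,\rh'_0},\rqt{\bb{R}^6}{E_{+,\rh'_0}}\rt) \ds \Hom\lt(E_{-,\rh'_0},\rqt{\bb{R}^6}{E_{-,\rh'_0}}\rt),
\eew
obtained by differentiating the defining eigenspace relation $K_{\rh'} v = \pm\, vol_{\rh'} \cdot v$ (for $v \in E_{\pm,\rh'}$) in the direction of a perturbation $\th \w \dot\nu$. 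The genericity of $\Xi$ \wrt\ $\rh$, combined with the progressively stronger genericity assumptions built into $\mc{N}(\rh;\Xi,\bb{B})_1$ and $\mc{N}(\rh;\Xi,\bb{B})_2$ for parts (2) and (3), is precisely what supplies enough freedom in $\dot\nu$ to hit the required normal directions to $Z$. Carrying this out is the distribution-theoretic content deferred to \srefs{Epm-Deriv-Sec}--\ref{3rd-Mac-Sec}, and is where I expect the bulk of the remaining technical work to lie.
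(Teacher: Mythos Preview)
Your plan is correct and matches the paper's approach: the paper treats $E_+$ and $E_-$ separately rather than via the combined $\Phi$, and establishes full transversality of the maps $\bb{E}_\pm:\nu\mapsto E_{\pm,\th\w\nu+\rh}$ to the relevant Schubert-type loci in $\Gr_3(\bb{R}^6)$ via the explicit derivative formula of \pref{Epm-deriv}, with the genericity of $\Xi$ (together with the stability results \lrefs{gen-stab-1}--\ref{non-gen-stab-2}) supplying the required normal directions exactly as you anticipate. The only packaging differences are that for part~(2) the paper discards the $\bb{B}''$-constraint and works with the larger set $\Si^\pm_{\bb{B}'}=\{\nu:\bb{B}'\cap E_{\pm,\rh'}=\bb{B}\cap E_{\pm,\rh'}\}$, and for part~(3) it uses only the $E_+$-condition to produce a codimension-$2$ containing submanifold rather than the full codimension-$4$ locus you describe.
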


\noindent The proof of this result occupies the rest of this paper.\\

\section{Computing the derivatives of \(\rh \mt E_{\pm,\rh}\)}\label{Epm-Deriv-Sec}

Given \(\rh\in\ww[+]{3}\lt(\bb{R}^6\rt)^*\), recall that there is a decomposition \(\bb{R}^6 = E_{+,\rh} \ds E_{-,\rh}\).  Thus, there is also a decomposition:
\ew
\ww{p}\lt(\bb{R}^6\rt)^* \cong \Ds_{r+s = p} \ww{r}E_{+,\rh}^* \ts \ww{s}E_{-,\rh}^* = \Ds_{r+s = p} \ww{r,s}\lt(\bb{R}^6\rt)^*.
\eew
Define \(\SL(3;\bb{R})^2\)-equivariant isomorphisms \(\ka^+_\rh: \ww{2,0}\lt(\bb{R}^6\rt)^* \to E_{+,\rh}\) and \(\ka^-_\rh: \ww{0,2}\lt(\bb{R}^6\rt)^* \to E_{-,\rh}\) as the inverses to the maps:
\ew
\bcd[row sep = -3mm, column sep = 7mm]
E_{+,\rh} \ar[r]& \ww{2,0}\lt(\bb{R}^6\rt)^* & \hs{-4mm}\raisebox{-2.5mm}{and}\hs{-4mm} & E_{-,\rh} \ar[r]& \ww{0,2}\lt(\bb{R}^6\rt)^* & \hs{-4mm}\raisebox{-2.5mm}{respectively.}\\
w \ar[r, maps to]& w \hk \rh & &  w \ar[r, maps to]& w \hk \rh &
\ecd
\eew
\begin{Prop}\label{Epm-deriv}
Consider the smooth maps:
\ew
\bcd[row sep = 0pt, column sep = 7mm]
E_\pm:\ww[+]{3}\lt(\bb{R}^6\rt)^* \ar[r]& \Gr_3\lt(\bb{R}^6\rt)\\
\rh \ar[r, maps to] & E_{\pm,\rh}.
\ecd
\eew
Fix \(\rh \in \ww[+]{3}\lt(\bb{R}^6\rt)^*\).  Then:
\ew
\bcd[row sep = 0pt, column sep = 7mm]
\mc{D}E_+|_\rh: \ww{3}\lt(\bb{R}^6\rt)^* \ar[r]& \lt(E_{+,\rh}\rt)^*\ts E_{-,\rh} \cong \Hom(E_{+,\rh},E_{-,\rh})\\
\al \ar[r, maps to]& -(\Id\ts\ka^-_\rh)(\pi_{1,2}(\al))
\ecd
\eew
and
\ew
\bcd[row sep = 0pt, column sep = 7mm]
\mc{D}E_-|_\rh: \ww{3}\lt(\bb{R}^6\rt)^* \ar[r]& E_{+,\rh}\ts\lt(E_{-,\rh}\rt)^* \cong \Hom(E_{-,\rh},E_{+,\rh})\\
\al \ar[r, maps to]& (\ka^+_\rh\ts\Id)(\pi_{2,1}(\al)),
\ecd
\eew
respectively, where \(\pi_{r,s}\) denotes the projection onto forms of type \((r,s)\).
\end{Prop}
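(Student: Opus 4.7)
The plan is to characterise \(E_{\pm,\rh}\) via a vanishing condition involving \(\rh\) itself, and then differentiate this condition along the linear curve \(\rh_t = \rh + t\al\).  The key observation, which uses the fact that \(\rh\) has only \((3,0)\) and \((0,3)\) components relative to its own eigenspace decomposition (and, in particular, no \((2,1)\) or \((1,2)\) part), is that \(v \in E_{+,\rh}\) \iff\ \(\rh(v,w_1,w_2) = 0\) for all \(w_1,w_2 \in E_{-,\rh}\), and symmetrically that \(w \in E_{-,\rh}\) \iff\ \(\rh(w,v_1,v_2) = 0\) for all \(v_1,v_2 \in E_{+,\rh}\).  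Both statements are easy to verify on the model form \(\rh_+\) using the non-degeneracy of the \(E_-\)-volume form \(\th^{456}\) (resp.\ the \(E_+\)-volume form \(\th^{123}\)), and then propagate to arbitrary \(\rh\) by \(\GL_+(6;\bb{R})\)-equivariance of all the constructions involved.

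Given \(\al \in \ww{3}\lt(\bb{R}^6\rt)^*\), I consider \(\rh_t = \rh + t\al\), which remains in \(\ww[+]{3}\lt(\bb{R}^6\rt)^*\) for small \(t\) by openness.  Using the standard identification \(\T_{E_{+,\rh}}\Gr_3\lt(\bb{R}^6\rt) \cong \Hom(E_{+,\rh},E_{-,\rh})\) afforded by the complement \(E_{-,\rh}\), I write:
\ew
E_{+,\rh_t} = \lt\{v + t\ph_t(v) ~\m|~ v \in E_{+,\rh}\rt\} \et E_{-,\rh_t} = \lt\{w + t\ps_t(w) ~\m|~ w \in E_{-,\rh}\rt\}
\eew
for linear maps \(\ph_t: E_{+,\rh} \to E_{-,\rh}\) and \(\ps_t: E_{-,\rh} \to E_{+,\rh}\) with \(\ph_0 = 0\), \(\ps_0 = 0\), so that \(\mc{D}E_+|_\rh(\al) = \ph_0'\) and \(\mc{D}E_-|_\rh(\al) = \ps_0'\).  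Substituting these expressions into the characterisation of \(E_{+,\rh_t}\) and differentiating at \(t = 0\) then yields an algebraic equation for \(\ph_0'\), and analogously for \(\ps_0'\).

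The decisive simplification, which makes the two formul\ae\ decouple cleanly, is that the first-order terms involving \(\ps_0'\) drop out: for \(v \in E_{+,\rh}\) and \(w_1,w_2 \in E_{-,\rh}\), each of \(\rh(v,\ps_0'(w_1),w_2)\) and \(\rh(v,w_1,\ps_0'(w_2))\) evaluates \(\rh\) on a tuple of type \((E_+,E_+,E_-)\), which vanishes because \(\rh\) has no \((2,1)\)-component.  What survives is \((\pi_{1,2}\al)(v,w_1,w_2) + \rh(\ph_0'(v),w_1,w_2) = 0\), which rearranges (using the defining property of \(\ka^-_\rh\) as the inverse of \(w \mt w \hk \rh\) on \(E_{-,\rh}\), together with the contraction identity \((v \hk (\pi_{1,2}\al))(w_1,w_2) = (\pi_{1,2}\al)(v,w_1,w_2)\)) to the stated formula for \(\mc{D}E_+|_\rh\).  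The computation for \(\mc{D}E_-|_\rh\) is strictly analogous with the roles of \((E_+,\pi_{1,2},\ka^-_\rh)\) and \((E_-,\pi_{2,1},\ka^+_\rh)\) interchanged; the only genuine obstacle is careful bookkeeping around the tensor-product identifications \(E_{\pm,\rh}^* \ts E_{\mp,\rh} \cong \Hom(E_{\pm,\rh},E_{\mp,\rh})\) and sign conventions for \(\hk\), which dictate the relative signs of the two formul\ae.
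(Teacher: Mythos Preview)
Your argument is correct and takes a genuinely different route from the paper's.  The paper argues by \(\SL(3;\bb{R})^2\)-equivariance and Schur's lemma: since \(\mc{D}E_+|_\rh\) is an equivariant linear map \(\ww{3}\lt(\bb{R}^6\rt)^* \to \Hom(E_{+,\rh},E_{-,\rh})\) and the only simple submodule of the source isomorphic to the target is \(\ww{1,2}\lt(\bb{R}^6\rt)^*\), the derivative must be \(C\,(\Id\ts\ka^-_\rh)\circ\pi_{1,2}\) for some constant \(C\), which is then pinned down by computing the single example \(\rh_t = \rh_+ + t\,\th^{145}\).  You instead differentiate the implicit characterisation \(\rh(v,w_1,w_2) = 0\) directly, and your observation that the cross-terms involving \(\ps_0'\) vanish (because \(\rh\) has no \((2,1)\)-part) is exactly what makes the two derivatives decouple.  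The paper's route is shorter and explains structurally why only the \((1,2)\)-component can contribute; yours is more elementary (no representation theory) and delivers the constant without a separate normalisation step.  One small slip to fix: your parametrisation \(E_{+,\rh_t} = \{v + t\ph_t(v)\}\) together with \(\ph_0 = 0\) and \(\mc{D}E_+|_\rh(\al) = \ph_0'\) is inconsistent, since with the explicit factor of \(t\) the derivative of \(t\ph_t\) at \(t=0\) is \(\ph_0\), not \(\ph_0'\).  You mean either \(\{v + \ph_t(v)\}\) with \(\ph_0 = 0\) (so the derivative is \(\ph_0'\)) or \(\{v + t\ph_t(v)\}\) with no constraint on \(\ph_0\) (so the derivative is \(\ph_0\)); either convention makes the remainder of your computation go through verbatim.
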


\begin{proof}
Start with the first statement.  Since \(\ww[+]{3}\lt(\bb{R}^6\rt)^* \pc \ww{3}\lt(\bb{R}^6\rt)^*\) is open, one has \(\T_\rh\ww[+]{3}\lt(\bb{R}^6\rt)^* = \ww{3}\lt(\bb{R}^6\rt)^*\).  Likewise, the decomposition \(\bb{R}^6 = \bb{E}_{+,\rh} \ds E_{-,\rh}\) yields \(\T_{E_{+,\rh}}\Gr_3\lt(\bb{R}^6\rt) \cong \Hom(E_{+,\rh},E_{-,\rh})\).  Since the only simple \(\SL(3;\bb{R})^2\)-submodule of \(\ww{3}\lt(\bb{R}^6\rt)^*\) which is isomorphic to \(\Hom(E_{+,\rh},E_{-,\rh}) \cong \lt(E_{+,\rh}\rt)^*\ts E_{-,\rh}\) is \(\ww{1,2}\lt(\bb{R}^6\rt)^*\), it follows that:
\ew
\mc{D}E_+|_\rh(\al) = C\Id\ts\ka^-_\rh(\pi_{1,2}(\al))
\eew
for some constant \(C\).

The value of \(C\) may be computed directly.  Consider \(\rh = \rh_+ = \th^{123} + \th^{456}\) and write:
\ew
\rh_t = \rh_+ + t\th^{145}.
\eew
A direct calculation shows that:
\ew
E_{+,\rh_t} = \<e_1 - te_6, e_2,e_3\?
\eew
so that:
\ew
\lt.\frac{\dd}{\dd t}E_{+,\rh_t}\rt|_{t=0} = -\th^1 \ts e_6.
\eew
By  comparison:
\ew
(\Id\ts\ka^-_{\rh_+})(\pi_{1,2}(\th^{145})) = \th^1 \ts e_6,
\eew
forcing \(C=-1\), as claimed.  The calculation for \(\mc{D}E_-|_\rh\) is similar.

\end{proof}
~

\section{\lref{mac-lem}(1): the macilence of \(\Si_{\bb{B}'}\)}

Recall the set:
\ew
\mc{N}(\rh;\bb{B})_0 = \lt\{\nu \in \ww{2}\bb{B}^* ~\m|~\th \w \nu + \rh \in \ww[+]{3}\lt(\bb{R}^6\rt)^*\rt\} \pc \ww{2}\bb{B}^*
\eew
and also the closed subset:
\ew
\Si_{\bb{B}'} = \lt\{\nu \in \mc{N}(\rh;\bb{B})_0~\m|~\bb{B}' \text{ is not generic for } \th \w \nu + \rh\rt\}.
\eew

\begin{Lem}\label{gen-stab-1}
\ew
\Si_\bb{B} = \es.
\eew
\end{Lem}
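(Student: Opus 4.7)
The plan is to exploit the fact that restriction to \(\bb{B}\) annihilates the new contribution \(\th \w \nu\), so the 3-form on \(\bb{B}\) is unchanged. Specifically, for any \(\nu \in \ww{2}\bb{B}^*\), since \(\th\) is a generator of \(\Ann(\bb{B})\), the 3-form \(\th \w \nu\) vanishes on \(\bb{B}\), and hence
\ew
(\th \w \nu + \rh)|_\bb{B} = \rh|_\bb{B}.
\eew
This elementary identity is the entire content of the lemma once combined with the earlier identification of genericity.

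Next I would recall from the discussion immediately preceding \pref{slr-setup} that \(\mc{T}_{\rh_+}\) maps \(\oGr_{5,gen}\lt(\bb{R}^6\rt)\) onto the open orbit \(\ww[OP]{3}\lt(\bb{R}^5\rt)\), while the other two orbits \(\oGr_{5,\pm}\lt(\bb{R}^6\rt)\) are mapped into the orbit of non-zero decomposable 3-forms. Since these images are disjoint, a hyperplane \(\bb{B}\) lies in \(\oGr_{5,gen}\lt(\bb{R}^6\rt)\) \wrt\ an \slr\ 3-form \(\rh'\) \iff\ \(\rh'|_\bb{B}\) is ospseudoplectic. Now by hypothesis \(\Xi\) is generic for \(\rh\) and \(\bb{B} \in \Xi\), so \(\bb{B} \in \oGr_{5,gen}\lt(\bb{R}^6\rt)\) \wrt\ \(\rh\), and hence \(\rh|_\bb{B}\) is ospseudoplectic.

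The proof then concludes quickly: for any \(\nu \in \mc{N}(\rh;\bb{B})_0\), the form \(\rh' = \th \w \nu + \rh\) is by definition an \slr\ 3-form, and by the identity above \(\rh'|_\bb{B} = \rh|_\bb{B}\) is ospseudoplectic. Therefore \(\bb{B} \in \oGr_{5,gen}\lt(\bb{R}^6\rt)\) \wrt\ \(\rh'\); as \(\{\bb{B}\}\) is a single-element configuration, the pairwise distinctness condition in the definition of genericity is vacuous, so \(\{\bb{B}\}\) is generic for \(\rh'\). Thus \(\nu \notin \Si_\bb{B}\), which proves \(\Si_\bb{B} = \es\).

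There is no real obstacle here — the lemma is essentially a sanity check that the one hyperplane \(\bb{B}\) along which the perturbation \(\th \w \nu\) is supported cannot itself be made non-generic by that perturbation. The genuine work in \lref{mac-lem} begins with parts (2) and (3), where the hyperplanes \(\bb{B}' \ne \bb{B}\) are affected non-trivially by the perturbation and one must actually estimate the codimension of the loci where genericity fails.
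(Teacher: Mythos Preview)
Your proof is correct and follows essentially the same approach as the paper: both arguments use the identity \((\th \w \nu + \rh)|_\bb{B} = \rh|_\bb{B}\) together with the fact that genericity of a single hyperplane is characterised by the restriction being ospseudoplectic rather than decomposable. Your write-up is slightly more explicit in justifying the ``if and only if'' direction, but the core idea is identical.
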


\begin{proof}
Indeed, let \(\nu \in \mc{N}(\rh; \bb{B})_0\), i.e.\ suppose that \(\th \w \nu + \rh\) is an \slr\ 3-form.  Then:
\ew
(\th \w \nu + \rh)|_\bb{B} = \rh|_\bb{B}.
\eew
Since \(\bb{B}\) is generic for \(\rh\), \(\rh|_\bb{B}\) is an ospseudoplectic 3-form and thus \(\bb{B}\) must also be generic for \(\th \w \nu + \rh\) (else \((\th \w \nu + \rh)|_\bb{B}\) would be decomposable).

\end{proof}

\begin{Rk}
The above proof also shows that if \(\bb{B}\) is non-generic for \(\rh\) (equivalently, if \(\rh|_\bb{B}\) is decomposable) then it is also non-generic for all \(\th \w \nu + \rh\).  At first sight, this result may seem surprising, since one expects non-genericity to be destroyed by pertubations. On closer examination, however, the result is less surprising, since the space of perturbations of \(\rh\) of the form \(\th \w \nu + \rh\) is \(\bin{5}{2} = 10\)-dimensional, whereas the space of all perturbations of \(\rh\) is instead \(\bin{6}{3} = 20\)-dimensional.
\end{Rk}

\begin{Lem}\label{gen-stab-2}
Let \(\nu \in \mc{N}(\rh;\bb{B})_0\) and write \(\rh' = \th \w \nu + \rh \in \ww[+]{3}\lt(\bb{R}^6\rt)^*\).  Then:
\ew
(\bb{B} \cap E_{+,\rh}) \ds (\bb{B} \cap E_{-,\rh}) = (\bb{B} \cap E_{+,\rh'}) \ds (\bb{B} \cap E_{-,\rh'}).
\eew
\end{Lem}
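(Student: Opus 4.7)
The plan is to exploit the observation that $\th \in \Ann(\bb{B})$ implies $(\th \w \nu)|_\bb{B} = 0$, so $\rh|_\bb{B} = \rh'|_\bb{B}$. Combined with \lref{gen-stab-1} (which ensures $\bb{B}$ remains generic for $\rh'$), both sides of the claimed equality are $4$-dimensional subspaces of $\bb{B}$. I will then show that $(\bb{B} \cap E_{+,\rh}) \ds (\bb{B} \cap E_{-,\rh})$ is determined by $\rh|_\bb{B}$ alone, from which the lemma follows by applying the same characterization to $\rh'$.

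The characterization I propose is that $(\bb{B} \cap E_{+,\rh}) \ds (\bb{B} \cap E_{-,\rh})$ is the unique $4$-dimensional subspace $V \pc \bb{B}$ on which $\rh$ (equivalently, $\rh|_\bb{B}$) vanishes. To see that $V = (\bb{B} \cap E_{+,\rh}) \ds (\bb{B} \cap E_{-,\rh})$ satisfies $\rh|_V = 0$, note that with respect to the decomposition $\bb{R}^6 = E_{+,\rh} \ds E_{-,\rh}$, one has $\rh \in \ww{3,0}\lt(\bb{R}^6\rt)^* \ds \ww{0,3}\lt(\bb{R}^6\rt)^*$ (immediate in the standard model and transported by $\SL(3;\bb{R})^2$-equivariance). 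Since $\dim(\bb{B} \cap E_{\pm,\rh}) = 2$ by genericity of $\bb{B}$, one has $\ww{3}(\bb{B} \cap E_{\pm,\rh})^* = 0$, so neither bi-type component of $\rh$ can contribute on $V$, giving $\rh|_V = 0$.

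For uniqueness, any $4$-dimensional $V \pc \bb{B}$ has the form $V = \ker\beta$ for some $\beta \in \bb{B}^* \osr \{0\}$, and $\rh|_V = 0$ is equivalent to $\beta \w \rh|_\bb{B} = 0$ in $\ww{4}\bb{B}^*$ (divisibility of $\rh|_\bb{B}$ by $\beta$ in the exterior algebra). Since $\bb{B}$ is generic for $\rh$, $\rh|_\bb{B}$ is ospseudoplectic; in the standard model $\ta_0 = \th^{123} + \th^{145}$ on $\bb{R}^5$, a direct computation shows that $\beta \w \ta_0 = 0$ forces $\beta$ to be a scalar multiple of $\th^1$. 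Hence $\beta$, and thus $V$, is unique up to scale, and by $\GL_+(5;\bb{R})$-equivariance this extends to any ospseudoplectic $3$-form, completing the argument. The principal technical content lies in identifying this intrinsic characterization of $(\bb{B} \cap E_{+,\rh}) \ds (\bb{B} \cap E_{-,\rh})$; once found, both existence and uniqueness reduce to short manipulations in the standard model.
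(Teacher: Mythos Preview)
Your argument is correct and takes a genuinely different route from the paper's proof. The paper normalises to \(\rh = \th^{123} + \th^{456}\), \(\bb{B} = \<e_1,e_2,e_4,e_5,e_3+e_6\?\), and then shows by a direct computation of \(\fr{i}_{\rh'}(e_1)\) and \(\fr{i}_{\rh'}(e_4)\) that the para-complex structure \(I_{\rh'}\) preserves the subspace \(\<e_1,e_2,e_4,e_5\?\); this forces the latter to split into \(\pm 1\)-eigenspaces of \(I_{\rh'}\), and a dimension count (using \lref{gen-stab-1}) gives the equality. By contrast, you bypass \(I_{\rh'}\) entirely and instead identify \((\bb{B} \cap E_{+,\rh}) \ds (\bb{B} \cap E_{-,\rh})\) intrinsically as the unique hyperplane in \(\bb{B}\) on which the ospseudoplectic form \(\rh|_\bb{B}\) vanishes; the lemma then follows immediately from \(\rh|_\bb{B} = \rh'|_\bb{B}\). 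Your approach is more conceptual and slightly shorter, isolating precisely which data (namely \(\rh|_\bb{B}\)) determines the subspace in question, and the uniqueness computation for \(\be \w (\th^{123} + \th^{145}) = 0\) is elementary. The paper's approach, on the other hand, stays closer to the machinery (the maps \(K_\rh\), \(I_\rh\)) that is used elsewhere, and establishes in passing the stronger fact that \(I_{\rh'}\) itself preserves this 4-plane, not merely that its eigenspaces intersect \(\bb{B}\) in the expected way.
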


\begin{proof}
By applying a suitable orientation-preserving automorphism of \(\bb{R}^6\) one can always assume that:
\ew
\rh = \th^{123} + \th^{456} \et \bb{B} = \<e_1,e_2,e_4,e_5,e_3 + e_6\?.
\eew
Hence:
\e\label{sum-of-int}
(\bb{B} \cap E_{+,\rh}) \ds (\bb{B} \cap E_{-,\rh}) = \<e_1,e_2\? \ds \<e_4,e_5\? = \<e_1,e_2,e_4,e_5\?.
\ee
Now, take \(\bb{L} = \<e_3 - e_6\?\), \(\th = \th^3-\th^6\) and write:
\ew
\rh' = \th^{123} + \th^{456} + (\th^3 - \th^6) \w \nu.
\eew
Recall the para-complex structure \(I_{\rh'}\) induced by \(\rh'\).

\begin{Cl}
\ew
I_{\rh'}\lt(\<e_1,e_2,e_4,e_5\?\rt) \cc \<e_1,e_2,e_4,e_5\?.
\eew
\end{Cl}

\begin{proof}[Proof of Claim]
Recall the map:
\ew
\bcd[row sep = 0]
\fr{i}_{\rh'}:\bb{R}^6 \ar[r] & \ww{5}\lt(\bb{R}^6\rt)^*\\
~v \ar[r, maps to] & (v \hk \rh') \w \rh'.
\ecd
\eew
Then, by the definition of \(I_{\rh'}\), it is equivalent to prove that:
\ew
\fr{i}_{\rh'}\lt(\<e_1,e_2,e_4,e_5\?\rt) \cc \th^{36} \w \ww{3}\lt(\bb{R}^6\rt)^*.
\eew
Consider the subgroup \(\SL(2;\bb{R})^2 \pc \SL(3;\bb{R})^2\) acting block diagonally on \(\<e_1,e_2\? \ds \<e_4,e_5\?\) and trivially on \(\<e_3,e_6\?\).  Clearly, \(\SL(2;\bb{R})^2\) preserves \(\rh\), \(\bb{B}\), \(\bb{L}\) and \(\th\) as described above, and acts transitively on the set of non-zero vectors in both \(\<e_1,e_2\?\) and \(\<e_4,e_5\?\).  By exploiting this freedom, it suffices to prove that:
\ew
\fr{i}_{\rh'}(e_1), \fr{i}_{\rh'}(e_4) \in \th^{36} \w \ww{3}\lt(\bb{R}^6\rt)^*.
\eew
However, a direct calculation yields:
\ew
(e_1 \hk \rh') \w \rh' &= \lt(\th^{23} - \th^3 \w (e_1 \hk \nu) + \th^6 \w (e_1 \hk \nu)\rt) \w \lt(\th^{123} + \th^{456} + (\th^3 - \th^6) \w \nu\rt)\\
&= \lt(\th^{245} - \th^2 \w \nu + \th^{12} \w (e_1 \hk \nu) + \th^{45} \w (e_1 \hk \nu)\rt) \w \th^{36}
\eew
whilst:
\ew
(e_4 \hk \rh') \w \rh' &= \lt(\th^{56} - \th^3 \w (e_4 \hk \nu) + \th^6 \w (e_4 \hk \nu)\rt) \w \lt(\th^{123} + \th^{456} + (\th^3 - \th^6) \w \nu\rt)\\
&= \lt(-\th^{125} - \th^5 \w \nu + \th^{45} \w (e_4 \hk \nu) + \th^{12} \w (e_4 \hk \nu)\rt) \w \th^{36},
\eew
completing the proof of the claim.

\let\qed\relax
\end{proof}

Using the claim, \(\lt(I_{\rh'}|_{\<e_1e_2,e_4,e_5\?}\rt)^2 = \Id\) and thus:
\ew
\<e_1,e_2,e_4,e_5\? = e_+ \ds e_-,
\eew
where \(e_\pm\) are the \(\pm1\)-eigenspaces of \(I_{\rh'}|_{\<e_1,e_2,e_4,e_5\?}\).  Since \(\<e_1,e_2,e_4,e_5\? \pc \bb{B}\), it follows that \(e_\pm \cc \bb{B} \cap E_{\pm,\rh'}\) and hence:
\ew
\<e_1,e_2,e_4,e_5\? = e_+ \ds e_- \cc \lt(\bb{B} \cap E_{+,\rh'}\rt) \ds \lt(\bb{B} \cap E_{-,\rh'}\rt).
\eew
However, \(\bb{B}\) is generic for \(\rh'\) by \lref{gen-stab-1} and hence:
\ew
\dim\lt[\lt(\bb{B} \cap E_{+,\rh'}\rt) \ds \lt(\bb{B} \cap E_{-,\rh'}\rt)\rt] = 4.
\eew
Therefore (see \eref{sum-of-int}):
\ew
\lt(\bb{B} \cap E_{+,\rh'}\rt) \ds \lt(\bb{B} \cap E_{-,\rh'}\rt) = \<e_1,e_2,e_4,e_5\? = (\bb{B} \cap E_{+,\rh}) \ds (\bb{B} \cap E_{-,\rh}),
\eew
as required.

\end{proof}

\begin{Lem}\label{non-gen-stab-2}
Let \(\nu \in \mc{N}(\rh;\bb{B})_0\) and write \(\rh' = \th \w \nu + \rh \in \ww[+]{3}\lt(\bb{R}^6\rt)^*\).  Suppose a hyperplane \(\bb{B}' \ne \bb{B}\) satisfies:
\e\label{deg-pair}
\bb{B} \cap E_{+,\rh'} \cc \bb{B}'\cap E_{+,\rh'} \et \bb{B} \cap E_{-,\rh'} \cc \bb{B}'\cap E_{-,\rh'}.
\ee
Then, \eref{deg-pair} also holds \wrt\ \(\rh\), i.e.:
\e\label{orig-deg}
\bb{B} \cap E_{+,\rh} \cc \bb{B}'\cap E_{+,\rh} \et \bb{B} \cap E_{-,\rh} \cc \bb{B}'\cap E_{-,\rh}.
\ee
In particular, \(\{\bb{B},\bb{B}'\}\) is non-generic for \(\rh\).
\end{Lem}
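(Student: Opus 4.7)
My plan is to derive this lemma almost directly from the two preceding results; essentially no new calculation should be required. First I would invoke \lref{gen-stab-1} to note that \(\bb{B}\) is generic not only for \(\rh\) but also for \(\rh' = \th \w \nu + \rh\), so that each of the four intersections \(\bb{B} \cap E_{\pm,\rh}\) and \(\bb{B} \cap E_{\pm,\rh'}\) is 2-dimensional.

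The key observation I would then exploit is \lref{gen-stab-2}, which exhibits two decompositions of the \emph{same} 4-dimensional subspace of \(\bb{B}\):
\ew
V = (\bb{B} \cap E_{+,\rh}) \ds (\bb{B} \cap E_{-,\rh}) = (\bb{B} \cap E_{+,\rh'}) \ds (\bb{B} \cap E_{-,\rh'}).
\eew
The hypothesis \eref{deg-pair} supplies the chain of inclusions \(\bb{B} \cap E_{\pm,\rh'} \cc \bb{B}' \cap E_{\pm,\rh'} \cc \bb{B}'\), so from the second description of \(V\) I could immediately conclude \(V \cc \bb{B}'\). Feeding this back through the first description would give \(\bb{B} \cap E_{\pm,\rh} \cc \bb{B}'\), and combining this with the tautological inclusion \(\bb{B} \cap E_{\pm,\rh} \cc E_{\pm,\rh}\) would produce \eref{orig-deg} at once.

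For the `in particular' clause, I would split into two cases according to whether \(\bb{B}'\) itself lies in \(\Gr_{5,gen}\lt(\bb{R}^6\rt)\) \wrt\ \(\rh\). If it does not, then \(\{\bb{B},\bb{B}'\}\) is already non-generic for \(\rh\) by the very definition of genericity. If it does, then all four sides of \eref{orig-deg} are 2-dimensional and the established inclusions are forced to be equalities, again yielding non-genericity of \(\{\bb{B},\bb{B}'\}\). I do not expect any genuine obstacle in carrying out this plan, since the only substantive content --- the existence of a common 4-dimensional subspace \(V\) invariant under the deformation \(\rh \mapsto \rh'\) --- has already been established in \lref{gen-stab-2}; the remainder is bookkeeping.
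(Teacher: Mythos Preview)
Your proposal is correct and follows essentially the same route as the paper's proof: both arguments use \lref{gen-stab-2} to identify the common 4-dimensional subspace \(V = (\bb{B} \cap E_{+,\rh}) \ds (\bb{B} \cap E_{-,\rh}) = (\bb{B} \cap E_{+,\rh'}) \ds (\bb{B} \cap E_{-,\rh'})\), observe from the hypothesis that \(V \cc \bb{B}'\), and then intersect with \(E_{\pm,\rh}\) to obtain \eref{orig-deg}; your case split for the `in particular' clause is exactly the paper's.
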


\begin{proof}
Firstly, note that:
\ew
\bb{B} \cap E_{\pm,\rh} &= \lt[\lt(\bb{B} \cap E_{+,\rh}\rt) \ds \lt(\bb{B} \cap E_{-,\rh}\rt)\rt] \cap E_{\pm,\rh}\\
&= \lt[\lt(\bb{B} \cap E_{+,\rh'}\rt) \ds \lt(\bb{B} \cap E_{-,\rh'}\rt)\rt] \cap E_{\pm,\rh} \hs{3mm} \text{ by \lref{gen-stab-2}}\\
&\cc \lt[\lt(\bb{B}' \cap E_{+,\rh'}\rt) \ds \lt(\bb{B}' \cap E_{-,\rh'}\rt)\rt] \cap E_{\pm,\rh} \hs{3mm} \text{ by \eref{deg-pair}}\\
&\cc \bb{B}' \cap E_{\pm,\rh},
\eew
as required.  For the final statement, note that either \(\bb{B}'\) itself is non-generic for \(\rh\), or else \(\dim(\bb{B}'\cap E_{+,\rh}) = \dim(\bb{B}'\cap E_{-,\rh}) = 2\) together with \eref{orig-deg} forces:
\ew
\bb{B}\cap E_{+,\rh} = \bb{B}'\cap E_{+,\rh} \et \bb{B}\cap E_{-,\rh} = \bb{B}'\cap E_{-,\rh}.
\eew
In either case, \(\lt\{\bb{B},\bb{B}'\rt\}\) is non-generic for \(\rh\).

\end{proof}

\begin{Rk}
If both \(\bb{B}\) and \(\bb{B}'\) are individually generic for \(\rh\), it is clear that \(\{\bb{B},\bb{B}'\}\) is non-generic for \(\rh\) \iff\ \eref{orig-deg} is satisfied.
\end{Rk}

I now prove \lref{mac-lem}(1).  Recall the statement of the lemma:\vs{3mm}

\noindent{\bf Lemma \ref{mac-lem}(1).}
{\it For all \(\bb{B}'\in\Xi\), the subset \(\Si_{\bb{B}'} \pc \mc{N}(\rh;\bb{B})_0\) is macilent. More precisely, it is either empty or the disjoint union of two closed submanifolds, each of codimension 3.}\vs{2mm}

\begin{proof}
By \lref{gen-stab-1}, it suffices to consider \(\bb{B}' \ne \bb{B}\).  Consider the maps:
\ew
\bcd[row sep = 0pt]
\bb{E}_\pm:\mc{N}(\rh;\bb{B})_0 \ar[r]& \Gr_3\lt(\bb{R}^6\rt)\\
\nu \ar[r, maps to]& E_{\pm,\th \w \nu + \rh}\hs{1.5pt}.
\ecd
\eew
(I use the notation \(\bb{E}_\pm\) to emphasise that, unlike the maps \(E_\pm\), the arguments of the maps \(\bb{E}_\pm\) are 2-forms, and not \slr\ 3-forms.)  Consider the submanifold \(\Gr_3(\bb{B}') \pc \Gr_3\lt(\bb{R}^6\rt)\) and recall that \(\bb{B}'\) is non-generic for \(\th \w \nu + \rh\) \iff\ either \(\bb{E}_+(\nu)\) or \(\bb{E}_-(\nu)\) lies in \(\Gr_3(\bb{B}')\). Thus:
\ew
\Si_{\bb{B}'} = \lt[\lt(\bb{E}_+\rt)^{-1}\Gr_3(\bb{B}')\rt] \msm{\coprod} \lt[\lt(\bb{E}_-\rt)^{-1}\Gr_3(\bb{B}')\rt].
\eew

\begin{Cl}
The maps \(\bb{E}_\pm\) are transverse to the submanifold \(\Gr_3(\bb{B}')\).
\end{Cl}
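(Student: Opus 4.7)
The plan is to compute the differentials $\dd\bb{E}_\pm|_\nu$ via \pref{Epm-deriv}, identify the normal space to $\Gr_3(\bb{B}')$ in $\Gr_3\lt(\bb{R}^6\rt)$, and verify, pointwise on the preimage, that the induced map to this normal space is surjective. I describe the argument for $\bb{E}_+$; the case of $\bb{E}_-$ is obtained verbatim by exchanging $E_+ \leftrightarrow E_-$, $\pi_{1,2} \leftrightarrow \pi_{2,1}$ and $\ka^-_{\rh'} \leftrightarrow \ka^+_{\rh'}$ throughout. Note that by \lref{gen-stab-1} one may assume $\bb{B}' \ne \bb{B}$.

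Fix $\nu \in \bb{E}_+^{-1}(\Gr_3(\bb{B}'))$, write $\rh' = \th \w \nu + \rh$ and set $\pi := E_{+,\rh'} \cc \bb{B}'$.  Standard identifications give $\T_\pi \Gr_3\lt(\bb{R}^6\rt) \cong \Hom(\pi, \bb{R}^6/\pi)$ and $\T_\pi \Gr_3(\bb{B}') \cong \Hom(\pi, \bb{B}'/\pi)$, so the normal space is $\Hom(\pi, \bb{R}^6/\bb{B}') \cong \pi^*$, which is $3$-dimensional.  Using the decomposition $\bb{R}^6 = \pi \ds E_{-,\rh'}$ to identify $\bb{R}^6/\bb{B}' \cong E_{-,\rh'}/\lt(\bb{B}' \cap E_{-,\rh'}\rt)$, \pref{Epm-deriv} shows that the composition of $\dd\bb{E}_+|_\nu$ with the projection onto the normal factor sends $\mu \in \ww{2}\bb{B}^*$ to the image in $\pi^* \ts \lt(E_{-,\rh'}/\lt(\bb{B}' \cap E_{-,\rh'}\rt)\rt)$ of $-(\Id \ts \ka^-_{\rh'})(\pi_{1,2}(\th \w \mu))$.

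To verify surjectivity, I will first apply the $\SL(3;\bb{R})^2$-action (preserving $\rh'$) to normalise $\rh' = \th^{123} + \th^{456}$, and then use the induced $\SL(3;\bb{R})$-action on $E_{-,\rh'}$ to arrange $\bb{B}' \cap E_{-,\rh'} = \<e_4,e_5\?$; thus $\bb{R}^6/\bb{B}'$ is spanned by the class of $e_6$, and $\ka^-_{\rh'}(\th^{45}) = e_6$.  Writing $\th = \sum_k a_k \th^k$ and $\mu = \sum_{j<k} b_{jk} \th^{jk}$ relative to a splitting $(\bb{R}^6)^* = \<\th\? \ds \bb{B}^*$, a short computation (in which only the $\th^i \w \th^{45}$ terms of $\pi_{1,2}(\th \w \mu)$ survive the subsequent projection) will show that the induced map $\ww{2}\bb{B}^* \to \pi^* \cong \bb{R}^3$ sends $\mu$ to $(c_1,c_2,c_3)$ with
\ew
c_i \,=\, a_i b_{45} - a_4 b_{i5} + a_5 b_{i4} \hs{3mm} (i \in \{1,2,3\}),
\eew
up to an overall sign.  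As the $b_{jk}$ vary freely, this is surjective precisely when $(a_4,a_5) \ne (0,0)$, which is equivalent to $\bb{B} \not\ni$ one of $e_4, e_5$, i.e.\ to $\bb{B} \cap E_{-,\rh'} \ne \bb{B}' \cap E_{-,\rh'}$.

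The main (and essentially only) obstacle is verifying this last inequality, which is where the genericity of $\Xi$ for $\rh$ is essential.  Since $\bb{B}' \py E_{+,\rh'}$, the inclusion $\bb{B} \cap E_{+,\rh'} \cc \bb{B}' \cap E_{+,\rh'}$ is automatic; if additionally $\bb{B} \cap E_{-,\rh'} \cc \bb{B}' \cap E_{-,\rh'}$ then \lref{non-gen-stab-2} would force $\{\bb{B},\bb{B}'\}$ to be non-generic for $\rh$, contradicting the hypothesis that $\Xi$ (and hence every pair drawn from it) is generic for $\rh$.  Both $\bb{B} \cap E_{-,\rh'}$ and $\bb{B}' \cap E_{-,\rh'}$ are $2$-dimensional in $E_{-,\rh'}$ (the former by \lref{gen-stab-1}, the latter by the dimension count for $\bb{B}' = E_{+,\rh'} \ds (\bb{B}' \cap E_{-,\rh'})$), so the failed inclusion forces the required inequality, completing the verification of transversality.
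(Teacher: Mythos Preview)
Your argument is correct and follows essentially the same route as the paper: normalise so that \(\rh' = \th^{123} + \th^{456}\) and \(\bb{B}' = \<e_1,\dots,e_5\?\), identify the normal space to \(\Gr_3(\bb{B}')\) with \(\Hom(\<e_1,e_2,e_3\?,\<e_6\?)\), compute \(\mc{D}\bb{E}_+\) via \pref{Epm-deriv}, and then use \lref{non-gen-stab-2} (together with \lref{gen-stab-1}) to obtain the needed non-degeneracy of the \(E_{-,\rh'}\)-components of \(\th\). The only substantive difference is cosmetic: the paper uses the residual \(\SL(3;\bb{R})\times\SL(2;\bb{R})\) symmetry to reduce \(\th\) to the form \(\la_1\th^1 + \la_4\th^4 + \la_6\th^6\) and then exhibits three explicit tangent vectors, whereas you skip that normalisation and work with the general formula \(c_i = a_i b_{45} - a_4 b_{i5} + a_5 b_{i4}\). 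One small point worth making explicit in your write-up: the coefficients \(b_{jk}\) of \(\mu \in \ww{2}\bb{B}^*\) (viewed inside \(\ww{2}(\bb{R}^6)^*\) via the splitting) are not literally free, but this is harmless since the map \(\mu \mapsto \th \w \mu\) from \(\ww{2}\bb{B}^*\) onto \(\th \w \ww{2}(\bb{R}^6)^*\) is an isomorphism, so one may equivalently let \(\mu\) range over all of \(\ww{2}(\bb{R}^6)^*\).
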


\begin{proof}[Proof of Claim]
I consider \(\bb{E}_+\), the case of \(\bb{E}_-\) being essentially identical.  Suppose that \(\nu\in\mc{N}(\rh;\bb{B})_0\) satisfies \(\bb{E}_+(\nu) \in \Gr_3(\bb{B}')\). Write \(\rh' = \th \w \nu + \rh\) and after applying a suitable orientation-preserving automorphism of \(\bb{R}^6\), assume that:
\ew
\rh' = \th^{123} + \th^{456} \et \bb{B}' = \<e_1,e_2,e_3,e_4,e_5\?.
\eew
(Note that there is a residual \(\SL(3;\bb{R})\x\SL(2;\bb{R})\) freedom in choosing such an automorphism, acting diagonally on \(\<e_1,e_2,e_3\? \ds \<e_4,e_5\?\) and trivially on \(\<e_6\?\), a fact which will be exploited below.) Then, one may identify \(\T_{\bb{E}_+(\nu)}\Gr_3(\bb{B}') \cong \Hom(\<e_1,e_2,e_3\?,\<e_4,e_5\?)\) and moreover:
\ew
\rqt{\T_{\bb{E}_+(\nu)}\Gr_3\lt(\bb{R}^6\rt)}{\T_{\bb{E}_+(\nu)}\Gr_3(\bb{B}')} &\cong \rqt{\Hom(\<e_1,e_2,e_3\?,\<e_4,e_5,e_6\?)}{\Hom(\<e_1,e_2,e_3\?,\<e_4,e_5\?)}\\
&\cong \Hom(\<e_1,e_2,e_3\?,\<e_6\?).
\eew

Next recall that \(\Ann(\bb{B}) = \<\th\?\) and write:
\ew
\th = \sum_{i=1}^6 \la_i \th^i = \sum_{i=1}^3 \la_i\th^i + \sum_{i=4}^5 \la_i\th^i + \la_6\th^6.
\eew
By exploiting the residual \(\SL(3;\bb{R})\x\SL(2;\bb{R})\) freedom described above, \wlg\ assume that:
\ew
\th = \la_1\th^1 + \la_4\th^4 + \la_6\th^6.
\eew
I claim that \(\la_4\ne0\). Indeed, suppose \(\th = \la_1\th^1 + \la_6\th^6\). If \(\la_6 = 0\), then \(E_{-,\rh'} = \<e_4,e_5,e_6\? \pc \Ker(\th) = \bb{B}\), hence \(\bb{B}\) is non-generic for \(\rh'\) and whence \(\nu \in \Si_\bb{B}\), contradicting \lref{gen-stab-1}. Thus, \(\la_6\ne0\) and:
\ew
\bb{B} \cap E_{-,\rh'} = \<e_4,e_5\? = \bb{B}' \cap E_{-,\rh'}.
\eew
However, since \(E_{+,\rh'} \pc \bb{B}'\), one trivially has that \(\bb{B} \cap E_{+,\rh'} \cc \bb{B}' \cap E_{+,\rh'}\). Thus, using \lref{non-gen-stab-2}, the pair \(\{\bb{B},\bb{B}'\} \cc \Xi\) is not generic for \(\rh\), which contradicts the assumption that \(\Xi\) is generic for \(\rh\). Thus, \(\la_4 \ne 0\), as claimed.

Finally, note that \(\T_\nu\mc{N}(\rh;\bb{B})_0 = \ww{2}\bb{B}^*\), since \(\mc{N}(\rh;\bb{B})_0 \pc \ww{2}\bb{B}^*\) is open by the stability of \slr\ 3-forms.  Choose \(\nu_i \in \ww{2}\bb{B}^*\) for \(i=1,2,3\) such that:
\ew
\th \w \nu_i = \th \w \th^{i5}.
\eew
(Such \(\nu_i\) exists, since \((\th \w \th^{i5})|_\bb{B} = 0\).)  Then:
\ew
\mc{D}E_+|_{\rh'}(\nu_i) &= -\Id\ts\ka^-_{\rh'}(\pi_{1,2}(\th \w \th^{i5}))\\
&= \la_4\th^i \ts e_6 - \la_6\th^i \ts e_4
\eew
which projects to the element \(\la_4\th^i\ts e_6\) in \(\Hom(\<e_1,e_2,e_3\?,\<e_6\?) \cong \rqt{\T_{\bb{E}_+(\nu)}\Gr_3\lt(\bb{R}^6\rt)}{\T_{\bb{E}_+(\nu)}\Gr_3(\bb{B}')}\). Since \(\la_4 \ne 0\), this proves the surjectivity of the composite:
\ew
\begin{tikzcd}
\ww{2}\bb{B}^* \ar[r, ^^22\mc{D}\bb{E}_+|_{\nu}^^22] & \T_{\bb{E}_+(\nu)}\Gr_3\lt(\bb{R}^6\rt) \ar[r] & \rqt{\T_{\bb{E}_+(\nu)}\Gr_3\lt(\bb{R}^6\rt)}{\T_{\bb{E}_+(\nu)}\Gr_3(\bb{B}')}.
\end{tikzcd}
\eew
Thus, \(\bb{E}_+\) is transverse to \(\Gr_3(\bb{B}')\), completing the proof of the claim.

\let\qed\relax
\end{proof}

Resuming the main proof, since \(\Gr_3(\bb{B}')\) is closed and has codimension \(9 - 6 = 3\) in \(\Gr_3\lt(\bb{R}^6\rt)\), it follows that the submanifolds \(\lt(\bb{E}_+\rt)^{-1}\Gr_3(\bb{B}')\) and \(\lt(\bb{E}_-\rt)^{-1}\Gr_3(\bb{B}')\) of \(\mc{N}(\rh;\bb{B})_0\) are closed and each have codimension 3, and hence:
\ew
\Si_{\bb{B}'} = \lt[\lt(\bb{E}_+\rt)^{-1}\Gr_3(\bb{B}')\rt] \msm{\coprod} \lt[\lt(\bb{E}_-\rt)^{-1}\Gr_3(\bb{B}')\rt]
\eew
is macilent. This completes the proof.

\end{proof}

\section{\lref{mac-lem}(2): the macilence of \(\Si^\pm_{\{\bb{B}',\bb{B}''\}}\)}

Recall the set:
\ew
\mc{N}(\rh;\Xi,\bb{B})_1 = \lt\{\nu \in \ww{2}\bb{B}^* ~\middle|~\th \w \nu + \rh \in \ww[+]{3}\lt(\bb{R}^6\rt)^* \text{ and every \(\bb{B}' \in \Xi\) is generic for } \th \w \nu + \rh\rt\}.
\eew
For each \(\{\bb{B}',\bb{B}''\} \cc \Xi\), recall further the closed subsets \(\Si^\pm_{\{\bb{B}',\bb{B}''\}} \pc \mc{N}(\rh;\Xi,\bb{B})_1\) defined by:
\ew
\Si^+_{\{\bb{B}',\bb{B}''\}} = \lt\{\nu \in \mc{N}(\rh;\Xi,\bb{B})_1 ~\m|~ \bb{B}' \cap E_{\pm, \th \w \nu + \rh} = \bb{B}'' \cap E_{\pm, \th \w \nu + \rh} \text{ and } \bb{B}' \cap E_{+, \th \w \nu + \rh} = \bb{B} \cap E_{+,\th \w \nu + \rh} \rt\}
\eew
and
\ew
\Si^-_{\{\bb{B}',\bb{B}''\}} = \lt\{\nu \in \mc{N}(\rh;\Xi,\bb{B})_1 ~\m|~ \bb{B}' \cap E_{\pm, \th \w \nu + \rh} = \bb{B}'' \cap E_{\pm, \th \w \nu + \rh} \text{ and } \bb{B}' \cap E_{-, \th \w \nu + \rh} = \bb{B} \cap E_{-,\th \w \nu + \rh} \rt\}.
\eew

The aim of this section is to prove \lref{mac-lem}(2).  Recall the statement of the lemma:\vs{3mm}

\noindent{\bf Lemma \ref{mac-lem}(2).}
\em For all \(\{\bb{B}',\bb{B}''\} \cc \Xi\), the subsets \(\Si^\pm_{\{\bb{B}',\bb{B}''\}} \pc \mc{N}(\rh;\Xi,\bb{B})_1\) are macilent.  More precisely, each subset is contained in a submanifold of codimension 2.\vs{1.5mm}\em

\begin{proof}
Since at least one of \(\bb{B}'\) and \(\bb{B}''\) does not equal \(\bb{B}\), \wlg\ assume that \(\bb{B}' \ne \bb{B}\) and note that \(\Si^\pm_{\{\bb{B}',\bb{B}''\}}\) are contained in the sets:
\ew
\Si^\pm_{\bb{B}'} = \lt\{\nu \in \mc{N}(\rh;\Xi,\bb{B})_1 ~\m|~ \bb{B}' \cap E_{\pm, \th \w \nu + \rh} = \bb{B} \cap E_{\pm,\th \w \nu + \rh} \rt\},
\eew
respectively.  Thus, it suffices to prove that the sets \(\Si^\pm_{\bb{B}'} \pc \mc{N}(\rh;\Xi,\bb{B})_1\) are submanifolds of codimension 2 for each \(\bb{B}' \ne \bb{B}\).  Write \(\fr{C} = \bb{B} \cap \bb{B}'\), a 4-dimensional subspace of \(\bb{R}^6\). Using \(\fr{C}\), one may stratify the manifold \(\Gr_3\lt(\bb{R}^6\rt)\) as:
\ew
\Gr_3\lt(\bb{R}^6\rt) = \Si_1 \cup \Si_2 \cup \Si_3,
\eew
where:
\ew
\Si_i = \{E \in \Gr_3\lt(\bb{R}^6\rt)~|~\dim(\fr{C}\cap E) = i\}.
\eew
Explicitly, \(\Si_1\) is the open and dense subset of 3-planes intersecting \(\fr{C}\) transversally, while \(\Si_3 = \Gr_3(\fr{C})\). To understand the submanifold structure on \(\Si_2\), it is useful to describe its tangent space as a subspace of the tangent space of \(\Gr_3\lt(\bb{R}^6\rt)\).  Specifically, fix \(E \in \Si_2\) and write \(\fr{E} = E \cap \fr{C}\). Choose splittings:
\e\label{Tang-dec}
E = \fr{E}^2 \ds \fr{L}^1, \hs{5mm} \fr{C} = \fr{E}^2 \ds \fr{F}^2 \et \bb{R}^6 = \fr{E}^2 \ds \fr{L}^1 \ds \fr{F}^2 \ds \fr{K}^1,
\ee
where the superscripts denote the dimension of the respective subspaces. Then, \(\T_E\Gr_3\lt(\bb{R}^6\rt)\) may be identified with the space:
\ew
\Hom(\fr{E} \ds \fr{L},\fr{F} \ds \fr{K}) \cong \Hom(\fr{E},\fr{F}) \ds \Hom(\fr{E},\fr{K}) \ds \Hom(\fr{L},\fr{F}) \ds \Hom(\fr{L},\fr{K}).
\eew
Using this description, \(\T_E\Si_2\) is given by:
\ew
\T_E\Si_2 \cong \Hom(\fr{E},\fr{F}) \ds \Hom(\fr{L},\fr{F}) \ds \Hom(\fr{L},\fr{K}),
\eew
and hence:
\ew
\rqt{\T_E\Gr_3\lt(\bb{R}^6\rt)}{\T_E\Si_2} \cong \Hom(\fr{E},\fr{K}).
\eew
In particular, the codimension of \(\Si_2\) in \(\Gr_3\lt(\bb{R}^6\rt)\) is \(\dim[\Hom(\fr{E},\fr{K})] = 2\).

Now, consider the smooth maps:
\ew
\bcd[row sep = 0pt]
\bb{E}_\pm: \mc{N}(\rh;\Xi,\bb{B})_1 \ar[r]& \Gr_3\lt(\bb{R}^6\rt)\\
\nu \ar[r, maps to]& E_{\pm,\th \w \nu + \rh}\hs{1.5pt}.
\ecd
\eew
Since \(\fr{C} = \bb{B} \cap \bb{B}'\), one has:
\ew
\bb{E}_+(\nu) \cap \fr{C} = \lt(\bb{E}_+(\nu) \cap \bb{B}\rt) \cap \lt(\bb{E}_+(\nu) \cap \bb{B}'\rt).
\eew
Since both \(\bb{E}_+(\nu) \cap \bb{B}\) and \(\bb{E}_+(\nu) \cap \bb{B}'\) are 2-dimensional, it follows that \(\dim[\bb{E}_+(\nu) \cap \fr{C}] \le 2\), with equality \iff\ \(\bb{E}_+(\nu) \cap \bb{B} = \bb{E}_+(\nu) \cap \bb{B}'\).  Thus, \(\bb{E}_+\lt(\mc{N}(\rh;\Xi,\bb{B})_1\rt) \cc \Si_1 \cup \Si_2\) and:
\ew
\Si^+_{\bb{B}'} = \lt(\bb{E}_+\rt)^{-1}(\Si_2).
\eew
Likewise, \(\Si^-_{\bb{B}'} = \lt(\bb{E}_-\rt)^{-1}(\Si_2)\).  Therefore, to prove that \(\Si^\pm_{\bb{B}'}\) are submanifolds of codimension 2, it suffices to prove that the maps \(\bb{E}_\pm\) are transversal to the submanifold \(\Si_2 \pc \Gr_3\lt(\bb{R}^6\rt)\).

Firstly, consider the case of \(\Si^-_{\bb{B}'}\).  Let \(\nu \in \Si^-_{\bb{B}'}\) and define \(\rh' = \th \w \nu + \rh \in \ww[+]{3}\lt(\bb{R}^6\rt)^*\).  After applying a suitable orientation-preserving automorphism of \(\bb{R}^6\), one may assume that:
\ew
\rh' = \th^{123} + \th^{456} \et \bb{B} = \<e_1,e_2,e_4,e_5,e_3 + e_6\?.
\eew
Since \(\nu \in \Si^-_{\bb{B}'}\) one has \(\bb{B}' \cap E_{-,\rh'} = \bb{B} \cap E_{-,\rh'} = \<e_4,e_5\?\).  If additionally \(\bb{B}' \cap E_{+,\rh'} = \bb{B} \cap E_{+,\rh'}\), then by \lref{non-gen-stab-2} the pair \(\{\bb{B}, \bb{B}'\}\) is non-generic for \(\rh\), contradicting the fact that \(\Xi\) is generic for \(\rh\).  Thus, \(\bb{B}' \cap E_{+,\rh'}\) intersects \(\bb{B} \cap E_{+,\rh'} = \<e_1,e_2\?\) along a 1-dimensional subspace which, by applying a suitable \(\SL(2;\bb{R})\) symmetry to the subspace \(\<e_1,e_2\?\), can be taken to be \(\<e_1\?\).  Therefore, \(\bb{B}' \cap E_{+,\rh'} = \<e_1,r e_2 + e_3\?\) for some \(r \in \bb{R}\).  Now, consider \(F \in \SL(3;\bb{R})^2\) given by:
\ew
(e_1,e_2,e_3,e_4,e_5,e_6) \mt (e_1,e_2,e_3 - r e_2,e_4,e_5,e_6).
\eew
Then, \(F\) preserves \(\rh'\) and \(\bb{B}\) (and hence \(\bb{B}' \cap E_{-,\rh'} = \bb{B} \cap E_{-,\rh'}\)) and maps:
\ew
\<e_1,r e_2 + e_3\? \mt \<e_1,e_3\?.
\eew
Thus, \wlg\ one can take \(\bb{B}' \cap E_{+,\rh'} = \<e_1,e_3\?\).  Therefore:
\ew
\bb{B}' = \<e_1,e_3,e_4,e_5, s e_2 + t e_6\?
\eew
for some \(s,t \in \bb{R}\).  Note that \(s \ne 0\) (as else \(E_{-,\rh'} \pc \bb{B}'\) and so \(\bb{B}'\) is non-generic for \(\rh'\), contradicting \(\nu \in \mc{N}(\rh;\Xi,\bb{B})_1\)) and similarly \(t \ne 0\) (as else \(E_{+,\rh'} \pc \bb{B}'\)).  Thus, by rescaling \(s\) and \(t\), one may assume \wlg\ that \(t = 1\).  Now, consider \(G \in \SL(3;\bb{R})^2\) given by:
\ew
G:(e_1,e_2,e_3,e_4,e_5,e_6) \mt (s e_1,s^{-1} e_2,e_3,e_4,e_5,e_6).
\eew
Then, \(G\) preserves \(\rh'\), \(\bb{B}\) and preserves \(\bb{B}' \cap E_{+,\rh'} = \<e_1,e_3\?\) and maps:
\ew
\<e_1,e_3,e_4,e_5, s e_2 + e_6\? \mt \<s^{-1}e_1, e_3,e_4,e_5, e_2 + e_6\? = \<e_1, e_3,e_4,e_5, e_2 + e_6\?.
\eew
Thus, \wlg\ one can take \(\bb{B}' = \<e_1, e_3,e_4,e_5, e_2 + e_6\?\) and thus:
\ew
\bb{B} \cap \bb{B}' = \<e_1,e_4,e_5, e_2 + e_3 + e_6\?.
\eew
One can then choose:
\ew
\fr{E} = \<e_4,e_5\?, \hs{5mm} \fr{L} = \<e_6\?, \hs{5mm} \fr{F} = \<e_1,e_2 + e_3 + e_6\? \et \fr{K} = \<e_2 - e_3\?.
\eew

The proof now proceeds by direct calculation.  Choose \(\nu_1,\nu_2 \in \ww{2}\bb{B}^*\) such that:
\ew
\th \w \nu_1 = \th \w \th^{14} \et \th \w \nu_2 = \th \w \th^{15}.
\eew
(Such \(\nu_i\) exists, since \((\th \w \th^{14})|_\bb{B} = (\th \w \th^{15})|_\bb{B} = 0\).)  Using the identification:
\e\label{id-1}
\T_{E_{-,\rh'}}\Gr_3\lt(\bb{R}^6\rt) \cong \Hom\lt(E_{-,\rh'},E_{+,\rh'}\rt) = \Hom\lt(\<e_4,e_5,e_6\?,\<e_1,e_2,e_3\?\rt)
\ee
and \pref{Epm-deriv}, and noting that \(\th = \th^3 - \th^6\) (up to rescaling), one computes that:
\ew
\mc{D}\bb{E}_-|_{\nu}(\nu_1) &= \ka^+_\rh\ts\Id(\pi_{2,1}[(\th^3 - \th^6) \w \th^{14}])\\
&= \th^4 \ts e_2
\eew
and:
\ew
\mc{D}\bb{E}_-|_{\nu}(\nu_2) &= \ka^+_\rh\ts\Id(\pi_{2,1}[(\th^3 - \th^6) \w \th^{15}])\\
&= \th^5 \ts e_2.
\eew

Replacing the identification in \eref{id-1} with the identification:
\ew
\T_{E_{-,\rh'}}\Gr_3\lt(\bb{R}^6\rt) = \Hom(\fr{E} \ds \fr{L}, \fr{F} \ds \fr{K}) = \Hom(\<e_4,e_5,e_6\?, \<e_1,e_2 - e_3, e_2 + e_3 + e_6\?)
\eew
the above results become:
\ew
\mc{D}\bb{E}_-|_{\nu}(\nu_1) = \th^4 \ts \lt(e_2 + \frac{1}{2}e_6\rt) \et \mc{D}\bb{E}_-|_{\nu}(\nu_2) = \th^5 \ts \lt(e_2 + \frac{1}{2}e_6\rt)
\eew
and hence:
\ew
\mc{D}\bb{E}_-\lt(\T_\nu\mc{N}(\rh;\Xi,\bb{B})_1\rt) \yy \Hom\lt(\<e_4,e_5\?, \lt\<e_2 + \frac{1}{2}e_6\rt\?\rt).
\eew
Thus:
\ew
\mc{D}\bb{E}_-\lt(\T_\nu\mc{N}(\rh;\Xi,\bb{B})_1\rt) + \T_{E_{-,\rh'}}\Si_2 &\yy \Hom\lt(\<e_4,e_5\?, \lt\<e_2 + \frac{1}{2}e_6\rt\?\rt) + \Hom(\fr{E},\fr{F})\\
& \hs{3cm} + \Hom(\fr{L},\fr{F}) + \Hom(\fr{L},\fr{K}).
\eew
Substituting the formulae for \(\Hom(\fr{E},\fr{F})\), \(\Hom(\fr{L},\fr{F})\) and \(\Hom(\fr{L},\fr{K})\), it follows that:
\ew
\mc{D}\bb{E}_-\lt(\T_\nu\mc{N}(\rh;\Xi,\bb{B})_1\rt) + \T_{E_{-,\rh'}}\Si_2 \yy \Hom(\<e_4,e_5,e_6\?, \<e_1,e_2 - e_3, e_2 + e_3 + e_6\?) = \T_{E_{-,\rh'}}\Gr_3\lt(\bb{R}^6\rt).
\eew
Thus, \(\bb{E}_-\) is transverse to \(\Si_2\), as required.\\

The case of \(\Si^+_{\bb{B}'}\) is analogous.  In a similar fashion to above, one argues that \wlg:
\ew
\rh' = \th^{123} + \th^{456}, \hs{5mm} \bb{B} = \<e_1,e_2,e_4,e_5,e_3 + e_6\?, \hs{5mm} \bb{B}' = \<e_1,e_2,e_4,e_6, e_3 + e_5\? \et \th = \th^3 - \th^6,
\eew
takes:
\ew
\fr{E} = \<e_1,e_2\?, \hs{5mm} \fr{L} = \<e_3\?, \hs{5mm} \fr{F} = \<e_4,e_3 + e_5 + e_6\? \et \fr{K} = \<e_5 - e_6\?
\eew
and identifies:
\ew
\T_{E_{+,\rh'}}\Gr_3\lt(\bb{R}^6\rt) = \Hom(\fr{E} \ds \fr{L}, \fr{F} \ds \fr{K}) = \Hom(\<e_1,e_2,e_3\?, \<e_4,e_5 - e_6, e_3 + e_5 + e_6\?).
\eew
By considering the derivative in the \(\nu_1\) and \(\nu_2\) directions, where \(\th \w \nu_1 = \th \w \th^{14}\) and \(\th \w \nu_2 = \th \w \th^{24}\), one verifies that:
\ew
\mc{D}\bb{E}_+\lt(\T_\nu\mc{N}(\rh;\Xi,\bb{B})_1\rt) \yy \Hom\lt(\<e_1,e_2\?,\lt\<\frac{1}{2}e_3 + e_5\rt\?\rt)
\eew
from which the result follows.

\end{proof}

\section{\lref{mac-lem}(3): the macilence of \(\Si_{\{\bb{B}',\bb{B}''\}}\)}\label{3rd-Mac-Sec}

Recall the set:
\ew
\mc{N}(\rh;\Xi,\bb{B})_2 = \bigg\{ \nu \in \mc{N}(\rh;\Xi,\bb{B})_1 ~\bigg|~ \raisebox{4pt}{\parbox{71mm}{\center{if \(\{\bb{B}',\bb{B}''\} \cc \Xi\) is non-generic for \(\rh' = \th \w \nu + \rh\), then \(\bb{B}' \cap E_{\pm,\rh'} \ne \bb{B} \cap E_{\pm,\rh'}\)}}} \bigg\}.
\eew
For each \(\{\bb{B}',\bb{B}''\} \cc \Xi\), recall further the closed subset \(\Si_{\{\bb{B}',\bb{B}''\}} \pc \mc{N}(\rh;\Xi,\bb{B})_2\) defined by:
\ew
\Si_{\{\bb{B}',\bb{B}''\}} = \lt\{\nu \in \mc{N}(\rh;\Xi,\bb{B})_2 ~\m|~ \bb{B}' \cap E_{\pm, \th \w \nu + \rh} = \bb{B}'' \cap E_{\pm, \th \w \nu + \rh}\rt\}.
\eew

\begin{Lem}\label{es-lem}
For all \(\{\bb{B},\bb{B}'\} \cc \Xi\):
\ew
\Si_{\{\bb{B},\bb{B}'\}} = \es.
\eew
\end{Lem}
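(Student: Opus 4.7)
The plan is to deduce $\Si_{\{\bb{B},\bb{B}'\}} = \es$ purely by unwinding the nested definitions of the sets $\Si^\pm_{\{\bb{B}',\bb{B}''\}}$, $\mc{N}(\rh;\Xi,\bb{B})_2$ and $\Si_{\{\bb{B}',\bb{B}''\}}$; no geometric input about $\rh$ or $\Xi$ beyond what is already built into $\mc{N}(\rh;\Xi,\bb{B})_2$ will be needed.

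First I would observe that each $\Si^\pm_{\{\bb{B}',\bb{B}''\}}$ is indexed by an \emph{unordered} pair, and its defining conditions are accordingly symmetric in $\bb{B}'$ and $\bb{B}''$. Indeed, the non-genericity clause $\bb{B}' \cap E_{\pm,\rh'} = \bb{B}'' \cap E_{\pm,\rh'}$ forces the additional clause $\bb{B}' \cap E_{+,\rh'} = \bb{B} \cap E_{+,\rh'}$ (for $\Si^+$) to be equivalent to $\bb{B}'' \cap E_{+,\rh'} = \bb{B} \cap E_{+,\rh'}$, and similarly for $\Si^-$. Consequently, $\mc{N}(\rh;\Xi,\bb{B})_2$ may be re-characterized as the set of those $\nu \in \mc{N}(\rh;\Xi,\bb{B})_1$ such that, whenever a pair $\{\bb{B}',\bb{B}''\} \cc \Xi$ is non-generic for $\rh' = \th \w \nu + \rh$, \emph{neither} element of the pair has its $E_+$- or $E_-$-intersection equal to that of $\bb{B}$.

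With this characterization in hand, suppose for contradiction that some $\nu$ lies in $\Si_{\{\bb{B},\bb{B}'\}}$ for a pair $\{\bb{B},\bb{B}'\} \cc \Xi$ with $\bb{B}' \ne \bb{B}$. By definition of $\Si_{\{\bb{B},\bb{B}'\}}$, the pair $\{\bb{B},\bb{B}'\}$ is non-generic for $\rh'$. But one of its elements is $\bb{B}$ itself, which trivially satisfies $\bb{B} \cap E_{+,\rh'} = \bb{B} \cap E_{+,\rh'}$. This violates the reformulated defining condition of $\mc{N}(\rh;\Xi,\bb{B})_2$, so no such $\nu$ can exist, and hence $\Si_{\{\bb{B},\bb{B}'\}} = \es$.

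There is no serious obstacle here: the argument is essentially bookkeeping for the three layers of construction. The one point requiring a little care is the symmetry observation for $\Si^\pm_{\{\bb{B}',\bb{B}''\}}$, which ensures that the exclusions defining $\mc{N}(\rh;\Xi,\bb{B})_2$ apply to either labelling of the pair; this is precisely what makes the tautological coincidence $\bb{B} \cap E_{\pm,\rh'} = \bb{B} \cap E_{\pm,\rh'}$ a genuine obstruction to $\nu$ lying in $\mc{N}(\rh;\Xi,\bb{B})_2$.
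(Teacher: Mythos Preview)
Your argument is correct, and it takes a genuinely different route from the paper's own proof. The paper argues as follows: if \(\nu \in \Si_{\{\bb{B},\bb{B}'\}}\), then \(\bb{B} \cap E_{\pm,\rh'} = \bb{B}' \cap E_{\pm,\rh'}\) for \(\rh' = \th \w \nu + \rh\); it then invokes the geometric \lref{non-gen-stab-2} to transfer this non-genericity back from \(\rh'\) to the original \(\rh\), contradicting the standing hypothesis that \(\Xi\) is generic for \(\rh\). Your proof instead stays entirely at the level of \(\rh'\) and exploits the layered construction: since the pair \(\{\bb{B},\bb{B}'\}\) contains \(\bb{B}\) itself, the tautology \(\bb{B} \cap E_{+,\rh'} = \bb{B} \cap E_{+,\rh'}\) places \(\nu\) in \(\Si^+_{\{\bb{B},\bb{B}'\}}\), which was already excised in forming \(\mc{N}(\rh;\Xi,\bb{B})_2\). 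In effect you show that \(\Si_{\{\bb{B},\bb{B}'\}} \cc \Si^+_{\{\bb{B},\bb{B}'\}} \cap \mc{N}(\rh;\Xi,\bb{B})_2 = \es\) directly from the definitions, without appealing to \lref{non-gen-stab-2} or to the genericity of \(\Xi\) for \(\rh\). Your approach is more self-contained and makes clear that this particular lemma is essentially built into the stratified construction; the paper's approach, by contrast, proves the slightly stronger statement that no \(\nu \in \mc{N}(\rh;\bb{B})_0\) whatsoever can satisfy \(\bb{B} \cap E_{\pm,\rh'} = \bb{B}' \cap E_{\pm,\rh'}\), independent of whether one has passed to \(\mc{N}(\rh;\Xi,\bb{B})_2\).
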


\begin{proof}
Indeed, if there were \(\nu \in \Si_{\{\bb{B},\bb{B}'\}}\) then, writing \(\rh' = \th \w \nu + \rh \in \ww[+]{3}\lt(\bb{R}^6\rt)^*\), one would find \(\bb{B} \cap E_{\pm,\rh'} = \bb{B}' \cap E_{\pm,\rh'}\) and thus, by \lref{non-gen-stab-2}, it would follow that \(\{\bb{B},\bb{B}'\} \cc \Xi\) was not generic for \(\rh\), contradicting the fact that \(\Xi\) is generic for \(\rh\).

\end{proof}

I now prove \lref{mac-lem}(3).  Recall the statement of the lemma:\vs{3mm}

\noindent{\bf Lemma \ref{mac-lem}(3).}
\em For all \(\{\bb{B}',\bb{B}''\} \cc \Xi\), the subset \(\Si_{\{\bb{B}',\bb{B}''\}} \pc \mc{N}(\rh;\Xi,\bb{B})_2\) is macilent.\vs{2mm}\em

\begin{proof}
By \lref{es-lem}, \wlg\ assume that \(\bb{B}' \ne \bb{B} \ne \bb{B}''\).  Since \(\bb{B}' \ne \bb{B}''\), defining \(\fr{C}' = \bb{B}' \cap \bb{B}''\) one finds, as in the proof of \lref{mac-lem}(2), that \(\fr{C}' \pc \bb{R}^6\) is 4-dimensional and induces a stratification:
\ew
\Gr_3\lt(\bb{R}^6\rt) = \Si'_1 \cup \Si'_2 \cup \Si'_3
\eew
where:
\ew
\Si'_i = \{E \in \Gr_3\lt(\bb{R}^6\rt)~|~\dim(\fr{C}' \cap E) = i\}.
\eew
Consider the map:
\ew
\bb{E}_+ : \mc{N}(\rh;\Xi,\bb{B})_2 &\to \Gr_3\lt(\bb{R}^6\rt)\\
\nu &\mt E_{+,\th \w \nu + \rh}.
\eew
Since \(\fr{C}' = \bb{B}' \cap \bb{B}''\), one has:
\e\label{dim-force}
\bb{E}_+(\nu) \cap \fr{C}' = \lt(\bb{E}_+(\nu) \cap \bb{B}'\rt) \cap \lt(\bb{E}_+(\nu) \cap \bb{B}''\rt).
\ee
Since both \(\bb{E}_+(\nu) \cap \bb{B}'\) and \(\bb{E}_+(\nu) \cap \bb{B}''\) are 2-dimensional, it follows that \(\dim[\bb{E}_+(\nu) \cap \fr{C}'] \le 2\), with equality \iff\ \(\bb{E}_+(\nu) \cap \bb{B}' = \bb{E}_+(\nu) \cap \bb{B}''\).  Thus, \(\bb{E}_+\lt(\mc{N}(\rh;\Xi,\bb{B})_2\rt) \cc \Si'_1 \cup \Si'_2\) and:
\ew
\Si_{\{\bb{B}',\bb{B}''\}} \cc \lt(\bb{E}_+\rt)^{-1}\lt(\Si'_2\rt).
\eew
(Likewise, \(\Si_{\{\bb{B}',\bb{B}''\}} \cc \lt(\bb{E}_-\rt)^{-1}\lt(\Si'_2\rt)\), a fact which will prove useful below.)  Since \(\Si'_2\) has codimension 2 in \(\Gr_3\lt(\bb{R}^6\rt)\), to complete the proof it suffices to prove that for all \(\nu \in \Si_{\{\bb{B}',\bb{B}''\}}\) the map \(\bb{E}_+\) is transverse to the submanifold \(\Si'_2 \pc \Gr_3\lt(\bb{R}^6\rt)\) at \(\nu\).  (Note that I do not claim \(\bb{E}_+\) is transverse to \(\Si'_2\) at all points of \(\lt(\bb{E}_+\rt)^{-1}\lt(\Si'_2\rt)\) and thus I do not claim that \(\lt(\bb{E}_+\rt)^{-1}\lt(\Si'_2\rt)\) itself is a submanifold of \(\mc{N}(\rh;\Xi,\bb{B})_2\).  The fact that \(\bb{E}_+\) is transverse to \(\Si'_2\) at (and hence also near) each point of \(\Si_{\{\bb{B}',\bb{B}''\}}\) shows that \(\lt(\bb{E}_+\rt)^{-1}(\Si'_2)\) is a submanifold of codimension 2 near each point of \(\Si_{\{\bb{B}',\bb{B}''\}}\), which is sufficient to establish the macilence of \(\Si_{\{\bb{B}',\bb{B}''\}}\).)

To this end, suppose that \(\nu\in\Si_{\{\bb{B}',\bb{B}''\}}\) and write \(\rh' = \th \w \nu + \rh\).  \Wlg, one may assume that \(\rh' = \th^{123} + \th^{456}\), \(\bb{B} = \<e_1,e_2,e_4,e_5,e_3+e_6\?\) and \(\th = \th^3 - \th^6\).  Recall from \eref{dim-force} that:
\ew
E_{\pm,\rh'} \cap \fr{C}' = E_{\pm,\rh'} \cap \bb{B}' = E_{\pm,\rh'} \cap \bb{B}''.
\eew
Recall moreover that, by definition of \(\mc{N}(\rh;\Xi,\bb{B})_2\), \(E_{\pm,\rh'} \cap \fr{C}' \ne \bb{B} \cap E_{\pm,\rh'}\) for both `\(+\)' and `\(-\)'.  Therefore, \(E_{+,\rh'} \cap \fr{C}'\) must intersect \(\bb{B} \cap E_{+,\rh'} = \<e_1,e_2\?\) in a 1-dimensional subspace, which \wlg\ may be taken to be \(\<e_1\?\). Thus:
\ew
E_{+,\rh'} \cap \fr{C}' = \<e_1,r e_2 + e_3\? \text{ for some } r \in \bb{R}.
\eew
Analogously, one can assume \wlg\ that:
\ew
E_{-,\rh'} \cap \fr{C}' = \<e_4,s e_5 + e_6\? \text{ for some } s \in \bb{R}.
\eew
Since \(\fr{C}'\) is itself 4-dimensional, it follows that:
\ew
\fr{C}' = \<e_1,r e_2 + e_3,e_4,s e_5 + e_6\?.
\eew
Thus, using notation analogous to \eref{Tang-dec}, one has:
\ew
\fr{E}' = \bb{E}_+(\nu) \cap \fr{C}' = \<e_1,r e_2 + e_3\?
\eew
and one may then choose \(\fr{L}',\fr{F}',\fr{K}'\) as:
\ew
\fr{L}' = \<e_2\?, \hs{5mm} \fr{F}' = \<e_4,s e_5 + e_6\? \et \fr{K}' = \<e_5\?.
\eew
Now, choose \(\nu_1,\nu_2 \in \ww{2}\bb{B}^*\) such that:
\ew
\th \w \nu_1 = \th \w \th^{46} \et \th \w \nu_2 = \th \w \th^{14}.
\eew
One may then compute that:
\ew
\mc{D}E_+|_{\rh'}(\th \w \nu_1) &= -\Id\ts\ka^-_{\rh'}(\pi_{1,2}((\th^3 - \th^6) \w \th^{46}))\\
&= \th^3 \ts e_5
\eew
while:
\ew
\mc{D}E_+|_{\rh'}(\th \w \nu) &= -\Id\ts\ka^-_{\rh'}(\pi_{1,2}((\th^3 - \th^6)\w\th^{14}))\\
&= -\th^1 \ts e_5.
\eew
Thus:
\ew
\mc{D}\bb{E}_+\lt(\T_\nu\mc{N}(\rh;\Xi,\bb{B})_2\rt) \supseteq \Hom(\<e_1, e_3\?,\<e_5\?)
\eew
and thus:
\ew
\mc{D}\bb{E}_+\lt(\T_\nu\mc{N}(\rh;\Xi,\bb{B})_2\rt) + \T_{E_{+,\rh'}}\Si_2 &\supseteq \Hom(\<e_1, e_3\?,\<e_5\?) \ds \Hom(\fr{E}',\fr{F})\\
& \hs{3cm} \ds \Hom(\fr{L}',\fr{F}') \ds \Hom(\fr{L}',\fr{K}')\\
&= \Hom(\<e_1,e_2,e_3\?, \<e_4,e_5,e_6\?) = \T_{E_{+,\rh'}}\Gr_3\lt(\bb{R}^6\rt),
\eew
which is the required statement of transversality, completing the proof \lref{mac-lem}(3).

\end{proof}

This completes the proof of \tref{slr-hP-thm}.

\qed

~\vs{5mm}

\noindent Laurence H.\ Mayther\\
University of Cambridge\\
United Kingdom\\
{\it lhm32@cam.ac.uk}

\end{document}